\documentclass{amsart}

\usepackage{color}
\usepackage{soul}
\usepackage{amsmath, amssymb}
\usepackage{hyperref}
\hypersetup{
    colorlinks=true, %set true if you want colored links
    linktoc=all,     %set to all if you want both sections and subsections linked
    linkcolor=blue,  %choose some color if you want links to stand out
}

\newcommand{\dotieconcat}[2]{% auxiliary macro, don't use it directly
  \text{\raisebox{.8ex}{$\smallfrown$}}%
}

\newcommand{\Q}{\mathbb{Q}}
\newcommand{\C}{\mathbb{C}}

\newcommand{\N}{\mathbb{N}}

\newcommand{\diam}{\operatorname{diam}}

\newcommand{\cstar}{$\mathrm{C}^*$}

\newbox\gnBoxA
\newdimen\gnCornerHgt
\setbox\gnBoxA=\hbox{$\ulcorner$}
\global\gnCornerHgt=\ht\gnBoxA
\newdimen\gnArgHgt
\def\Quinequote #1{%
    \setbox\gnBoxA=\hbox{$#1$}%
    \gnArgHgt=\ht\gnBoxA%
    \ifnum     \gnArgHgt<\gnCornerHgt \gnArgHgt=0pt%
    \else \advance \gnArgHgt by -\gnCornerHgt%
    \fi \raise\gnArgHgt\hbox{$\ulcorner$} \box\gnBoxA %
    \raise\gnArgHgt\hbox{$\urcorner$}}

\DeclareMathOperator{\Incl}{Incl}
\DeclareMathOperator{\Cov}{Cov}
\DeclareMathOperator{\proj}{proj}
\DeclareMathOperator{\ct}{ct}

\newcommand{\bG}{\partial G}

\theoremstyle{plain}
\newtheorem{theorem}{Theorem}[section]
\newtheorem{lemma}[theorem]{Lemma}
\newtheorem{corollary}[theorem]{Corollary}
\newtheorem{proposition}[theorem]{Proposition}

\newtheorem{question}[theorem]{Question}
\newtheorem{fact}[theorem]{Fact}

\theoremstyle{definition}
\newtheorem{definition}[theorem]{Definition}
\newtheorem{example}[theorem]{Example}
\newtheorem{remark}[theorem]{Remark}

\numberwithin{equation}{section}

\begin{document}
\title[Computable presentations of hyperbolic group \cstar-algebras]{Computable presentations of reduced group \cstar-algebras of hyperbolic groups}

\author{Isaac Goldbring}
\address{University of California, Irvine}
\email{igoldbri@uci.edu}
\urladdr{https://www.math.uci.edu/~isaac}

\begin{abstract}
Answering a question of Thomas Sinclair, we show that the standard presentation of the reduced group \cstar-algebra $C^*_r(G)$ of an infinite hyperbolic group $G$ is computable.  The main ingredient in proving this result is showing that the Gromov boundary $\bG$ of $G$ admits a computably compact metric presentation for which the natural action of $G$ on $\bG$ is computable.   
\end{abstract}
\maketitle

%\tableofcontents

\section{Introduction}\label{sec:Intro}

In their paper \cite{canyou}, Fritz, Netzer, and Thom essentially began the computable structure theory for \cstar-algebras.  This work was later extended by Fox \cite{fox} and recently by the author and his collaborators in \cite{effnuc,UHF,AF}.

Roughly speaking, the question at hand is the following:  if $A$ is a separable \cstar-algebra, is the norm on $A$ computable?  In other words, is there an algorithm which, upon input some element $a$ of $A$, computes the norm of $a$?  Since algorithms are finitary objects, one needs to treat this question with care:  we must restrict the algorithm to only allow inputs from some countable dense sequence from $A$ and then we may only ask for better and better rational approximations to the norm of $a$.

In their paper, Fritz, Netzer, and Thom only considered the question for group \cstar-algebras (both universal and reduced).  In this case, a natural countable dense subset of the algebra arises, namely elements of the group ring with Gaussian rational coefficients.  They showed that for a fintely presented amenable group, computability of the group \cstar-algebra $\C^*_r(G)$ is equivalent to solvability of the word problem $G$.  (Fox \cite{fox} later established that the finite presentability is a red herring; the same result holds for any computable presentation of a group.)

In another direction, Fritz, Netzer, and Thom showed that the universal group \cstar-algebra $C^*(\mathbb{F}_2)$ associated to a free group is computable and asked whether or not the same was true for $C^*(\mathbb{F}_2\times \mathbb{F}_2)$, motivated by concerns connected to the now refuted Connes Embedding Problem.  In \cite{GS}, the author and Thomas Sinclair gave a negative answer to their question.

After finishing our paper, Sinclair asked the author about the status of the computability of the \emph{reduced} group \cstar-algebras $C^*_r(\mathbb{F}_2)$ and $C^*_r(\mathbb{F}_2\times \mathbb{F}_2)$.  Recently, the author came up with an argument (using an old technique of Choi \cite{choi} for embeddings the reduced group \cstar-algebra of the free group into the Cuntz algebra $\mathcal{O}_2$ for showing that these algebras do indeed have computable norm (see Appendix \ref{cuntz} for an outline of the argument), but this argument seemed incapable of generalization to a larger class of groups (such as the class of surface groups, again a question posed to the author by Sinclair).

It then occurred to the author that an alternate, \emph{dynamical} proof of the computability of $C^*_r(\mathbb{F}_2)$ might lend itself better to generalization.  To see why, it helps to revisit the argument for establishing computability of the \cstar-algebra for a computably presented amenable group.  In general, for a computably presented group $G$, the standard presentation of the universal group \cstar-algebra $C^*(G)$ is a \emph{right-c.e.} presentation, meaning that the norm is effectively approximable \emph{from above}, whereas the standard presentation of the reduced group \cstar-algebra $C^*_r(G)$ is a \emph{left-c.e.} presentation, meaning that the norm is effectively approximable \emph{from below}.  By amenability, the natural quotient map $C^*(G)\to C^*_r(G)$ is an isomorphism, allowing one to combine these two algorithms and effectively approximate the norm.

Now the free group $\mathbb{F}_2$ is not amenable, but it does \emph{act amenably} on its boundary $\partial \mathbb{F}_2$.  One can run a similar argument, this time with amenable actions and crossed products, to show that if a computably presented group acts computably and amenably on a computably compact metric space, then the standard presentation of the resulting crossed product \cstar-algebra is computable; the details of this argument are to appear in the forthcoming preprint \cite{effnuc}.  Since the reduced group \cstar-algebra of $G$ embeds computably in the aforementioned crossed product, computably of $C^*_r(G)$ follows.

One can show that the prefix metric on $\partial \mathbb{F}_2$ is computably compact and the action of $\mathbb{F}_2$ on $\partial \mathbb{F}_2$ is computable, leading to an alternate proof of the computability of $C^*_r(\mathbb{F}_2)$ using the route described in the previous paragraph; we say more about this in Section \ref{freesection} below.  The bulk of this paper is devoted to the following generalization of this fact:

\begin{theorem}
If $G$ is an infinite hyperbolic group, the boundary $\partial G$ admits a computably compact presentation for which the natural action of $G$ is computable.
\end{theorem}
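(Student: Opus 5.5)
We present $\partial G$ through geodesic rays in the Cayley graph $\Gamma$ of $G$ with respect to a fixed finite generating set $S$, and we use a visual (Gromov-product) metric. Fix a presentation $\langle S \mid R\rangle$ with $\Gamma$ $\delta$-hyperbolic. Hyperbolic groups have solvable word problem (Dehn's algorithm), so the word metric $d(g,h)=|g^{-1}h|$ is computable, and hence so are Gromov products $(g\mid h)_e$. Every boundary point is the endpoint of a geodesic ray from $e$, and we may take these rays to be the \emph{shortlex-least} geodesics with respect to an order on $S$. By Cannon's theorem, shortlex geodesics form a regular language $L$, which is computable from the presentation, since automatic structures are effectively constructible for hyperbolic groups given $\delta$, and $\delta$ can be found by search. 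Infinite paths through the finite automaton for $L$ then give a compact space $\Sigma\subseteq S^{\N}$ of infinite shortlex geodesic words. This space is effectively closed (indeed effectively compact, as a regular $\omega$-language with all prefixes extendable after pruning dead states), and it surjects continuously onto $\partial G$.

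For the metric we use Gromov's visual metric $\rho_\epsilon$ for small rational $\epsilon$, so that $\rho_\epsilon$ is bi-Lipschitz to $e^{-\epsilon(\xi\mid\eta)}$ with explicit constants. To make computability clean, I would define the metric directly on rays. For rays $x,y\in\Sigma$, the quantity $(x_n\mid y_m)_e$ is within $O(\delta)$ of its limit for $n,m$ large, and is monotone up to $2\delta$. I would take the dense sequence to be the boundary points named by eventually periodic words in $\Sigma$; these are dense, since loops in the automaton exist whenever $G$ is infinite. The distance between two such points is computed via the chain construction $\rho(\xi,\eta)=\inf\sum e^{-\epsilon(\xi_i\mid\xi_{i+1})}$. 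The key technical step is to show that $\rho$ is computable to arbitrary precision. The Gromov product at infinity is only defined up to $2\delta$, so I would instead use an exactly computable version: $(x\mid y)=\liminf$ of $(x_n\mid y_n)$ along periodic words. For eventually periodic rays this stabilizes to eventually periodic behaviour, since $d(x_n,y_n)-n$-type quantities are eventually periodic by automaticity/fellow-traveller arguments. This makes the infimum over chains approximable from both sides, because chains may be restricted to a finite net using the bi-Lipschitz bounds.

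Computable compactness then follows from the map $\Sigma\to\partial G$ being effectively uniformly continuous: words sharing a prefix of length $n$ have Gromov product $\ge n-O(\delta)$, hence distance $\le Ce^{-\epsilon n}$. Given $2^{-k}$, we enumerate the finitely many prefixes of length $n(k)$ in $L$, extend each to a periodic word, and obtain a finite $2^{-k}$-net. For the action, a generator $s$ sends the ray $x$ to the ray $s x$, which is a quasi-geodesic at bounded distance from a geodesic. Computing a shortlex representative of $s x_n$ for large $n$, by shortlex reduction from the automatic structure, gives prefixes stabilizing up to length $n-O(\delta)$. This yields names for $s\cdot\xi$ to arbitrary precision, uniformly in $\xi$, so each $g$ acts computably.

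The main obstacle I expect is the exact computability of the metric values on the dense set, as opposed to computability merely up to multiplicative constants, given the $2\delta$ ambiguity of Gromov products at infinity. If the chain/infimum approach proves awkward, I would fall back on a metric defined directly on $\Sigma$ modulo the finite-to-one identification (the equivalence ``rays stay at distance $\le 2\delta$'', which is decidable on prefixes by a finite automaton). Concretely, I would take the quotient of the prefix metric, i.e.\ the largest metric below $d_\Sigma$ that vanishes on identified pairs. Its computability reduces to a finite-state computation on pairs of rays.
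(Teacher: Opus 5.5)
There is a genuine gap, and it is the step you flag as the main obstacle. Nothing in the proposal shows that any particular metric on $\partial G$ is computable on your dense set.

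The ray-level quantity $\lim_n(x_n\mid y_n)_e$ depends on which asymptotic representatives you use. Passing to $\Sigma$ does not make representatives unique: in $\mathbb{Z}$ with generators $\pm1,\pm2$ and $1<2$, both $2^\infty$ and $1\,2^\infty$ are shortlex rays to $+\infty$.

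More seriously, suppose you had exact values of $(\xi\mid\eta)$ on special points. Then $\rho_\epsilon(\xi,\eta)=\inf\sum e^{-\epsilon(\xi_i\mid\xi_{i+1})}$ is still an infimum over chains of unbounded length through arbitrary boundary points. The map $(\xi,\eta)\mapsto e^{-\epsilon(\xi\mid\eta)}$ is not continuous on $\partial G\times\partial G$, so moving chain points into a net, however fine, can change each term by a factor $e^{\pm O(\epsilon\delta)}$. Hence the bi-Lipschitz bounds determine $\rho_\epsilon$ only up to a fixed multiplicative constant, never to within $2^{-k}$. Nothing in the proposal gives lower bounds for the infimum at all. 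The paper explicitly avoids visual metrics for this reason.

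Your fallback has the same defect. The quotient of the prefix metric is again an infimum over chains of unbounded length, so it is not a finite-state computation on pairs of rays. You would also have to prove it separates points, which chain pseudometrics need not do. For example, take the middle-thirds Cantor set and identify the two endpoints of each gap: the quotient is an arc, but the chain pseudometric induced by the Euclidean metric vanishes identically.

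Your computable-compactness and action steps are phrased in terms of this metric, so they inherit the gap. The action step also assumes without argument that shortlex normal forms of $sx_n$ stabilize up to length $n-O(\delta)$.

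The missing idea is that you never need to evaluate a natural metric. The paper works in computable topology instead:
\begin{itemize}
\item The space $P$ of all geodesic rays from $e$ is computably compact and overt for the prefix metric.
\item By $2\delta$-fellow-travelling, $\alpha\sim\beta$ if and only if $d(\alpha(t),\beta(t))\le2\delta$ for all $t$, so $\sim$ is effectively closed.
\item An effectively closed quotient of a computably compact overt space is again a computably compact overt computable topological space. Here the basic sets are $B_{n,F}=\{\xi : T_n(\xi)\subseteq F\}$.
\item Computable Hausdorffness follows by choosing $n$ with $d(\alpha(n),\beta(n))>4\delta$ and taking $F,F'$ to be the corresponding $2\delta$-balls.
\item An effective Urysohn-type metrization theorem then supplies a computably compact metric presentation abstractly.
\end{itemize}
The action is handled at the same coarse level your shortlex argument actually reaches. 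If $\xi\in B_{n,F}$ with $\operatorname{diam}F\le4\delta$ and $n>|g|+\delta$, then $T_n(g\xi)\subseteq B(gx,8\delta+|g|)\cap P_n$ for any $x\in F$. Bounded-diameter sets containing $T_n(g\xi)$ for infinitely many $n$ form a neighborhood base at $g\xi$. Finally, the decidability of inclusions $B_{n,F}\subseteq B_{n',F'}$, proved via safety automata, converts this coarse data into a name of $g\xi$.
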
  

Since the action of a hyperbolic group on its boundary is known to be amenable by a result of Adams \cite{adams} (see also \cite[Theorem 5.3.15]{BO}), this establishes the main result of this paper:

\begin{theorem}
If $G$ is an infinite hyperbolic group, then the standard presentation of $C^*_r(G)$ is computable. 
\end{theorem}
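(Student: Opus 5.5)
The plan is to derive this theorem from the previous one (the computably compact presentation of $\partial G$ with computable $G$-action), combined with Adams' amenability result and the crossed-product machinery outlined in the introduction. Three steps remain to be assembled.

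First, hyperbolic groups are finitely presented and have solvable word problem (Dehn's algorithm). So fixing a finite generating set gives a computable presentation of $G$: we can effectively enumerate its elements as reduced words and decide equality, and multiplication is computable. Together with the first theorem, $G$ acts computably on the computably compact metric space $\partial G$. By Adams \cite{adams}, this action is topologically amenable.

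Second, we invoke the general result from \cite{effnuc}: if a computably presented group acts computably and amenably on a computably compact metric space $X$, then the standard presentation of $C(X)\rtimes_r G$ is computable. I would make precise what "standard presentation" means here. The special points are finite sums $\sum_g f_g u_g$ with $f_g$ drawn from a computable dense sequence in $C(X)$, for instance rational combinations of products of distance functions to special points, which computable compactness makes computable. The idea behind the proof is the amenable analogue of the argument combining left-c.e. and right-c.e. approximations. The reduced norm is approximable from below via the regular representations on $\ell^2(G)\otimes L^2$ over points of $X$. Amenability gives approximate invariant means $\mu_n: X\to \mathrm{Prob}(G)$ with finite support, and computable compactness lets us search effectively for such maps with a certified uniform error bound. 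This yields approximations from above, via the factorization through matrix algebras over $C(X)$ built from the positive-type functions, whose norms are computable because suprema over a computably compact space are computable.

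Third, $C^*_r(G)$ embeds into $C(\partial G)\rtimes_r G$ via $\lambda_g\mapsto u_g$, and this embedding is isometric. It sends the special point $\sum_g \alpha_g\lambda_g$ (with Gaussian rational $\alpha_g$) to $\sum_g \alpha_g 1\cdot u_g$, and the constant function $1$ is a computable point of $C(\partial G)$. A computable map of special points into a computable presentation, composed with an isometry, shows that the norm on the standard presentation of $C^*_r(G)$ is computable. We need $G$ infinite so that $\partial G$ is nonempty, indeed infinite, which is where the first theorem applies. For finite $G$, the statement is trivial anyway, since $C^*_r(G)$ is then finite dimensional.

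The main obstacle lies entirely in the first theorem and in the crossed-product result from \cite{effnuc}. Granting those, the only delicate point here is checking that the amenability witnesses can be effectively found and certified, so that the upper approximations are genuinely c.e. I would handle this by searching over finitely supported maps that are rational combinations of basic computable functions on $\partial G$. Their existence follows from density, and the required inequalities can be verified to arbitrary precision by computable compactness.
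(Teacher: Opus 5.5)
Your proposal follows the paper's route exactly. It combines solvability of the word problem for $G$, the computable boundary action of Theorem \ref{maintheorem}, Adams' amenability, the crossed-product criterion of \cite{effnuc} (Fact \ref{exacttheorem}) and the computable isometric embedding $C^*_r(G)\hookrightarrow C(\partial G)\rtimes_r G$; your sketch of the \cite{effnuc} argument goes beyond what the paper spells out, and there is one harmless slip: $\partial G$ need not be infinite (it has exactly two points when $G$ is virtually cyclic), but only its nonemptiness is used.
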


It will be clear from the proof that in fact the computability of the presentation of $C^*_r(G)$ is uniform in a finite presentation of $G$ and a hyperbolicity constant $\delta$ for $G$.

The author would like to thank Thomas Sinclair for many useful discussions around this work.

\section*{AI Disclosure Statement}

The idea to show that a hyperbolic group acts computably on its boundary is due to the author.  At first, the author considered using the well-studied class of so-called \emph{visual} metrics on the boundary as candidates for computable metrics.  After failing to prove that these were computable, the author consulted Claude Opus 5 to see if there was an argument for computability of visual metrics.  Claude confirmed that this seemed unlikely and suggested a route to a computable presentation of the boundary via computable topology, which is the route taken here.  Claude was further used to help the author learn the relevant geometry of hyperbolic groups as well as to reconstruct the necessary computable topology more in line with the author's work, rather than the results in the literature stated in terms of type two effective theory.

The paper was written by the author and he takes full responsibility for its content.

\section{A sufficient criterion for computable presentability of the reduced group \cstar-algebra}

In this paper, we will need the notions of computable presentations of groups, metric spaces, and \cstar-algebras.  (Later, we will also introduce computable presentations of toplogical spaces.)  These are all special instances of the notion of a computable presentation of a \emph{metric structure} as expounded in the recent book \cite{EMST}.  For ease of the reader, we recall these definitions here in our three special cases.

\begin{definition}
A \emph{presented metric space} is a triple $X^\#:=(X,d,(x_n)_{n\in \N})$, consisting of a metric space $(X,d)$ and a countable dense sequence $(x_n)_{n\in \N}$ of $(X,d)$; in this case we also say that $X^\#$ is a \emph{presentation of $(X,d)$} and we refer to the points $x_n$ as the \emph{special points} of $X^\#$.  Such a presentation is said to be \emph{computable} if there is an algorithm such that, upon input $(m,n,k)\in \N^3$, returns a rational number $q>0$ such that $|d(x_m,x_n)-q|<2^{-k}$.  A computable presentation $X^\#$ of a compact metric space $(X,d)$ is \emph{computably compact} if there is an algorithm such that, upon input $k$, returns a finite sequence $i_1,\ldots,i_t\in \N$ such that $X$ is covered by the open balls around $x_{i_j}$ of radius $2^{-k}$.
\end{definition}

\begin{definition}
A \emph{presented \cstar-algebra} is a pair $(A,(a_n)_{n\in \N})$ consisting of a \cstar-algebra $A$ and a countable sequence $(a_n)_{n\in \N}$ from $A$ that generates $A$ as a \cstar-algebra; in this case, we say that $A^\#$ is a \emph{presentation of $A$} and we call the elements $a_n$ the \emph{special points} of $A^\#$.  We call the elements of $A$ of the form $p(a_{i_1},\ldots,a_{i_k})$, where $p$ is a $*$-polynomial with coefficients from $\mathbb{Q}(i)$, \emph{generated points} of $A^\#$; note that the generated points of $A^\#$ can be effectively enumerated and are dense in $A$.  The presentation $A^\#$ is \emph{computable} if there is an algorithm such that, upon input a generated point $a$ of $A^\#$ and rational $\epsilon>0$, returns a rational number $q>0$ such that $|\|a\|-q|<\epsilon$.  
\end{definition}

In regards to the previous definition, we make the following convention:  \textbf{throughout this paper, all \cstar-algebras are assumed to be unital and all presentations are assumed to include the identity as a special point.}

We will need the following fact due to McNicholl \cite{mcnicholl}:

\begin{fact}
Given a computably compact presentation $X^\#$ of a metric space $X$, there is a computable presentation $C(X^\#)$ of $C(X)$ that is \emph{evaluative over $X^\#$} in the sense that the evaluation map $C(X^\#)\times X^\#\to \mathbb{C}$ is computable,  meaning that there is an algorithm such that, upon input a generated point $f$ of $C(X^\#)$, $n\in \N$, and rational $\epsilon>0$, returns $q\in \mathbb{Q}(i)$ such that $|f(x_n)-q|<\epsilon$.  
\end{fact}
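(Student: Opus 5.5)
We construct the presentation of $C(X)$ explicitly from $X^\#$ and then verify both computability of the norm and evaluativity. For each pair $(n,r)$ with $n\in\N$ and $r$ a positive rational, let $f_{n,r}(x):=\max(0,\,r-d(x,x_n))$, a ``tent'' function, and take as special points the constant function $1$ together with all $f_{n,r}$. This family separates points of $X$: given $x\neq y$, choose a special point $x_n$ and a rational $r$ with $d(x,x_n)<r<d(y,x_n)$. Since $C(X)$ is commutative, the Stone--Weierstrass theorem then shows that the special points generate $C(X)$ as a \cstar-algebra. Call this presentation $C(X^\#)$.

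Evaluativity is straightforward. A generated point is a $*$-polynomial $p$ with $\mathbb{Q}(i)$ coefficients in finitely many $f_{n_j,r_j}$, and
\[ p(f_{n_1,r_1},\ldots,f_{n_k,r_k})(x_m)=p\bigl(\max(0,r_1-d(x_m,x_{n_1})),\ldots\bigr). \]
Each distance $d(x_m,x_{n_j})$ is computable to any precision because $X^\#$ is a computable presentation. The maps $t\mapsto\max(0,r-t)$ and the polynomial $p$ are computable, with an explicit modulus of continuity on bounded sets, so we can evaluate to within any given $\epsilon$. The same reasoning shows that the generated point $f:=p(\ldots)$ is uniformly computable at every special point.

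The main step is computing $\|f\|_\infty$, and this is where computable compactness enters. First, $f$ is $L$-Lipschitz with $L$ computable from $p$: each $f_{n,r}$ is $1$-Lipschitz and bounded by $r$, so a crude bound of the form $L=\sum_{\text{monomials}}|c|\cdot\deg\cdot R^{\deg-1}$ works, where $R$ bounds all the $r_j$ and $1$. Given $\epsilon$, pick $k$ with $L2^{-k}<\epsilon/2$ and use computable compactness to obtain $i_1,\ldots,i_t$ such that the $2^{-k}$-balls around the $x_{i_j}$ cover $X$. Then
\[ \Bigl|\|f\|_\infty-\max_{j\le t}|f(x_{i_j})|\Bigr|\le L2^{-k}<\epsilon/2. \]
Approximating each $|f(x_{i_j})|$ to within $\epsilon/2$ by the evaluation algorithm gives a rational $q$ with $|\|f\|-q|<\epsilon$. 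If $q$ is required to be positive, replace it by a small positive rational when necessary.

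The only real care points are the effective Lipschitz bound on generated points and the density/generation claim, and both are routine.
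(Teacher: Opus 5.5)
Your proof is correct. The paper does not prove this Fact: it quotes it from McNicholl and uses it as a black box, adding only the remark that evaluative presentations are unique up to computable isomorphism. So there is no in-paper argument to compare against, and yours works as a self-contained substitute for the citation.

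\textbf{What makes it work.} The key choice is generators $f_{n,r}=\max(0,r-d(\cdot,x_n))$ that are uniformly bounded and $1$-Lipschitz.
\begin{itemize}
\item Every generated point then has a Lipschitz constant computable from its syntax.
\item So the supremum norm reduces to a finite maximum over the $2^{-k}$-net that computable compactness supplies.
\item The same modulus gives a bonus: evaluation is computable at arbitrary computable points of $X$ given by fast Cauchy names, not only at the special points $x_n$.
\end{itemize}
The citation gives one thing your argument does not: the uniqueness of evaluative presentations. That is not part of the Fact, and by that uniqueness your explicit presentation is as good as McNicholl's for the paper's later use.

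\textbf{Remaining details.} These are routine, as you say.
\begin{itemize}
\item For $c\in\Q(i)$, $|c|$ need not be rational, so use a rational upper bound for it when forming $L$.
\item Round the computed values $|f(x_{i_j})|$ to rationals.
\item To guarantee $q>0$, take an $\epsilon/2$-approximation $q$ of $\|f\|$ and output $\max(q,\epsilon/4)$. If $q<\epsilon/4$ then $0\le\|f\|<3\epsilon/4$, so $\epsilon/4$ is within $\epsilon/2$ of $\|f\|$.
\end{itemize}
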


In the context of the previous fact, such an evaluative presentation is in fact unique (up to computable isomorphism).

The following definition is perhaps a bit nonstandard but fits in nicely with our current discussion (see also \cite[Definition 3.19]{fox}):

\begin{definition}
A \emph{presented group} is a pair $G^\#:=(G,(g_n)_{n\in \N})$, where $G$ is a group and $(g_n)_{n\in \N}$ is a countable sequence in $G$ generating $G$ as a group; in this case, we say that $G^\#$ is a \emph{presentation of $G$}.  We say that the presentation $G^\#$ is \emph{computable} if there is an algorithm such that, upon input a finite word in the sequence $(g_n)_{n\in \N}$, determines whether or not the word equals the identity in $G$.
\end{definition}

In the above definition, if one wishes to use a finite generating sequence for $G$, this can be accomplished by repeating the finite sequence infinitely often.  In this case, if $G$ is a finitely generated group with fintie generating set $S$, we may refer to the presentation of $G$ whose generating sequence is $S$ (with some given enumeration) repeated infinitely often as the \emph{presentation of $G$ induced by $S$}.  If $G^\#$ is the presentation of $G$ induced by $S$, then computability of $G^\#$ is equivalent to the solvability of the word problem for $G$.  Since solvability of the word problem is independent of the generating set, if $G$ is a finitely generated group with solvable word problem, then all such presentations of $G$ will be computable.

If $G^\#$ is a presentation of the group $G$, we may consider the presentation of the reduced group \cstar-algebra $C^*_r(G)$ whose collection of special points are the canonical unitaries $u_{g_n}$ associated to the distinguished points $g_n$ of the presentation $G^\#$.  We refer to this presentation as $C^*_r(G^\#)$ and call it the \emph{presentation of $C^*_r(G)$ associated to $G^\#$}.

Suppose that $G^\#$ is a presentation of the group $G$ and $X^\#$ is a presentation of the metric space $X$.  Suppose further that $G\curvearrowright^\alpha X$ is an action of $G$ on $X$ (by homeomorphisms).  We say that the action $G^\#\curvearrowright^\alpha X^\#$ is \emph{computable} if there is an algorithm such that, upon input a distinguished point $g$ of $G^\#$, a special point $x$ of $X^\#$, and rational $\epsilon>0$, returns a special point $y$ of $X^\#$ such that $d(\alpha(g,x),y)<\epsilon$.

The following criterion for computability of the reduced group \cstar-algebra can be extracted from \cite[Section 5]{effnuc}:

\begin{fact}\label{exacttheorem}
Suppose that $G\curvearrowright^\alpha X$ is an amenable action of the group $G$ on the compact space $X$.  Furthermore, assume that $G^\#$ is a computable presentation of $G$, $X^\#$ is a computably compact presentation of $X$, and $G^\#\curvearrowright^\alpha X^\#$ is a computable action.  Then the presentation $C^*_r(G^\#)$ is computable.
\end{fact}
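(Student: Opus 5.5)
The plan is to pass through the crossed product $B:=C(X)\rtimes_{\alpha} G$ and prove that a natural presentation $B^\#$ of it is computable. The special points of $B^\#$ are the generated points of McNicholl's evaluative presentation $C(X^\#)$ together with the canonical unitaries $u_{g_n}$. The argument then has three steps.
\begin{enumerate}
\item Show that the universal crossed product norm is right-c.e.
\item Show that the reduced crossed product norm is left-c.e.
\item Use amenability of $\alpha$ to identify the two norms, and transfer the result to $C^*_r(G^\#)$ via the canonical embedding $C^*_r(G)\hookrightarrow C(X)\rtimes_r G$, $u_g\mapsto u_g$.
\end{enumerate}
This embedding is isometric, since the conditional expectation onto $C(X)$ restricts to the canonical trace on $C^*_r(G)$. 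Consequently the norm of a generated point of $C^*_r(G^\#)$ equals the norm of its image in $B^\#$, and the image is a generated point of $B^\#$ computed effectively.

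Preliminary: the sup norm on $C(X^\#)$ is computable.
\begin{itemize}
\item Since $C(X^\#)$ is a computable presentation evaluative over $X^\#$, for each generated point $f$ and rational $\epsilon$ we can search for a $\delta$ such that $f$ varies by less than $\epsilon$ on $\delta$-balls. We certify this using the covering lists supplied by computable compactness together with approximate values $f(x_n)$.
\item More simply, $\|f\|=\sup_n |f(x_n)|$ is left-c.e. from evaluativity. It is right-c.e. because $\|f\|$ is computable in the computable presentation $C(X^\#)$ itself. Hence we can decide approximate positivity of self-adjoint generated points of $C(X)$.
\end{itemize}

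Upper bounds (universal norm). In the algebraic crossed product $C(X)\rtimes_{\mathrm{alg}}G$, products are computed symbolically using the word problem for $G^\#$. The commutation relation $u_g f u_g^*=f\circ\alpha_{g^{-1}}$ is handled as follows: we never need $f\circ\alpha_{g^{-1}}$ exactly. We only need that $\|a\|_{\max}^2<r$ whenever we find a certificate
\[ r-a^*a=\sum_i b_i^*f_ib_i+e, \]
where the $f_i$ are approximately positive elements of $C(X)$ and $e$ is a small error term. Here $e$ is bounded in norm by an $\ell^1$-type norm $\sum_g\|e_g\|_\infty$, which is computable from the previous step. Enumerating all such certificates gives right-c.e. upper bounds. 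Conversely, every strict upper bound is eventually certified, by the standard Positivstellensatz-style density argument for the universal norm of a $*$-algebra. This step needs the twisted coefficients $f\circ\alpha_g$ to be uniformly approximable by generated points, which follows from computability of the action.

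Lower bounds (reduced norm). The reduced norm satisfies $\|a\|_r=\sup_{x\in X}\|\pi_x(a)\|$, where $\pi_x$ is the representation on $\ell^2(G)$ given by
\[ (f u_g)\delta_h=f(\alpha_{(gh)^{-1}}x)\,\delta_{gh}. \]
By continuity we may restrict to special points $x=x_n$ and to finite compressions of $\pi_x(a)$ to $\ell^2(F)$ for finite sets $F\subseteq G$ of words; the word problem lets us identify the elements of $F$. The matrix entries of these compressions are values $f(\alpha_{w}x_n)$ for words $w$, and we compute them as follows.
\begin{itemize}
\item Iterate the computable action letter by letter to approximate $\alpha_w x_n$ by special points.
\item At each iteration, use a modulus of continuity of each $\alpha_{g_k}$ to control error accumulation.
\item Apply evaluativity of $C(X^\#)$ at the resulting special points.
\end{itemize}
The norm of a finite matrix is computable, so this enumerates lower approximations converging to $\|a\|_r$.

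Conclusion. By amenability of $\alpha$, the universal and reduced crossed products coincide, so the two approximations meet and $B^\#$ is computable; the result for $C^*_r(G^\#)$ then follows from the embedding described at the start.

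The main obstacle is the error control in iterating the action. The hypothesis only gives $\alpha(g,x)$ for special points $x$, so composing along a word requires an effective modulus of uniform continuity for each generator $\alpha_{g_k}$. My first attempt would be to compute this modulus by search. Given $\epsilon$, try $\delta=2^{-j}$ and verify the continuity condition on a finite $2^{-j}$-net supplied by computable compactness, using the action's approximations at net points and at nearby special points. Compactness guarantees that the search halts, but it has to be arranged so that a verified $\delta$ is genuinely valid. This is subtle, because the check only inspects finitely many points, so some slack (testing with $\epsilon/3$ margins and overlapping balls) must be built in.
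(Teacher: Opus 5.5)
Your outline is the route the paper takes via \cite{effnuc}: the universal crossed-product norm is right-c.e.\ via sums-of-squares certificates, the reduced norm is left-c.e., amenability identifies them, and $C^*_r(G)\subseteq C(X)\rtimes_r G$ is included isometrically. However, the step you flag as the main obstacle is a genuine gap, and it cannot be closed by building in slack.

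\textbf{Where the gap is.} Both halves of your argument need each $\alpha_g$ to be \emph{effectively} uniformly continuous.
The upper-bound step needs $f\circ\alpha_g$ to be a computable point of $C(X^\#)$. Without that you cannot normal-order $r-a^*a-\sum_i b_i^*f_ib_i$ or bound $\sum_g\|e_g\|_\infty$. Your assertion that this ``follows from computability of the action'' is the same issue.
The lower-bound step needs it to iterate the action along words.
Values of $\alpha_g$ at finitely many special points say nothing about $\alpha_g$ between them. So a net search of the kind you describe can halt with a $\delta$ that is not a modulus.

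\textbf{Why it cannot be repaired from special-point data.} If ``computable action'' only means computable on special points, as in the definition in Section 2, then no computable modulus need exist. Here is an example.
Take Cantor space with the eventually-zero sequences as special points, and fix a noncomputable c.e.\ set $K$.
When $n$ enters $K$ at stage $s$, let $h$ swap two cylinders $[0^n1\tau]$ and $[0^n1\tau']$ with $|\tau|=|\tau'|\ge s$, where $\tau$ begins with $0$, $\tau'$ begins with $1$, and neither cylinder contains any of the first $s$ special points.
Then $h$ is a homeomorphism that is exactly computable on special points.
Yet any modulus of uniform continuity for $h$ decides $K$. The same holds for $f\circ h$, where $f=\sum_n2^{-n}1_{[0^n11]}$ is computable.
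The first bullet of your preliminary has the same defect. There it is harmless, because $\|f\|$ is computable by hypothesis.

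\textbf{What is missing.} You need the correct hypothesis together with its standard consequence.
Each $\alpha_g$ must be a computable map $X\to X$ in the name-to-name sense of the computable-topology section, uniformly in $g$. This is what Proposition~\ref{prop:action} actually proves for $\bG$: it converts a name of $\xi$ into a name of $g\xi$.
Computable compactness then yields a computable modulus:
\begin{itemize}
\item Given $k$, the sets $\alpha_g^{-1}(B(x_m,2^{-k-2}))$ are effectively open and cover $X$.
\item Enumerate rational balls $B(x_i,r_i)$ contained in them. Search for finitely many of these, together with an $l$, such that the shrunken balls $B(x_i,r_i-2^{-l})$ still cover $X$.
\item This search is c.e.\ and halts by compactness.
\item Once it halts, $d(x,y)<2^{-l}$ puts $x$ and $y$ in a common $B(x_i,r_i)$, so $d(\alpha_gx,\alpha_gy)<2^{-k}$.
\end{itemize}
With this modulus, $f\circ\alpha_g$ is a computable point of $C(X^\#)$ uniformly in $f$ and $g$, and $\alpha_wx_n$ is uniformly computable. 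Both of your steps then go through.

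\textbf{Two smaller points.}
\begin{itemize}
\item For the statement at hand, the lower bound needs no action at all. For $a\in\mathbb{Q}(i)[G]$, the reduced crossed-product norm equals $\|\lambda(a)\|$, which is left-c.e.\ from the word problem via finite compressions.
\item With your covariance convention, the induced representation should be $(fu_g)\delta_h=f(\alpha_{gh}x)\delta_{gh}$, not $f(\alpha_{(gh)^{-1}}x)\delta_{gh}$.
\end{itemize}
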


To be more precise, in the context of the previous fact, it is shown in \cite[Section 5]{effnuc} that the induced presentation of the crossed product algebra $(C(X)\rtimes_\alpha G)^\#$ is computable.  On the other hand, it is clear from the definitions that the canonical embedding $C^*_r(G^\#)\hookrightarrow (C(X)\rtimes_\alpha G)^\#$ is computable, thus establishing the previous fact.

The main result of this paper is the following.  The relevant definitions will be provided in Section \ref{hyperbolic}.

\begin{theorem}\label{maintheorem}
Suppose that $G$ is an infinite hyperbolic group and that $G^\#$ is the presentation of $G$ associated to some finite generating set of $G$.  Then there is a computable presentation $\partial G^\#$ of the boundary $\partial G$ of $G$ such that the natural action of $G^\# \curvearrowright \partial G^\#$ is computable.  
\end{theorem}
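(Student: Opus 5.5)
We build the presentation of $\partial G$ from computable combinatorial data on the Cayley graph $\Gamma=\Gamma(G,S)$, then check computability of the metric and of the action. The abstract suggests the author wants computable compactness as well, so we aim for that too. Fix a finite presentation and a hyperbolicity constant $\delta$. Since hyperbolic groups have solvable word problem (Dehn's algorithm), balls in $\Gamma$, word lengths and Gromov products $(g\cdot h)_e$ are all computable. The Gromov product tells us which boundary point a geodesic ray is near, but it is the reason visual metrics are hard to compute: they are only coarsely determined by finite data. We therefore avoid computing a visual metric directly. Instead we present $\partial G$ as a computable compact topological space and then invoke an effective Urysohn-type metrization to get a computably compact metric presentation.

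\textbf{Step 1 (special points and basic opens).} Represent boundary points by geodesic rays from $e$. By Cannon's theorem, geodesic words form a regular language, and so does the set of shortlex geodesics; both come with a computable automaton. Every boundary point is the endpoint of a unique shortlex-least geodesic ray, so $\partial G$ is the image of the closed, effectively presented set of infinite paths through the automaton, with only finite-to-one and coarse identifications. For special points take rays that are eventually periodic, i.e.\ of the form $uv^\infty$ with $v$ an infinite-order element. These are dense, because endpoints of quasi-geodesics $g^{\pm\infty}$ are dense in $\partial G$ for infinite $G$. For basic open sets take the shadows $U(w,n)$: the boundary points having a geodesic ray from $e$ that passes within $2\delta$ of the vertex $w$, where $|w|=n$, enlarged slightly so that the sets are open. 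The key decidable relations are the following, each certified by comparisons of finite geodesic segments of length $n+C\delta$, which are computable.
\begin{itemize}
\item[(a)] Whether a special point lies in (the closure of) a shadow.
\item[(b)] Formal inclusion $\overline{U(w,n)}\subseteq U(w',m)$.
\item[(c)] Formal disjointness.
\end{itemize}
In addition, for each $n$ the shadows $U(w,n)$ with $|w|=n$ cover $\partial G$, and their diameters shrink uniformly in $n$.

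\textbf{Step 2 (metric).} From these data we obtain a computably regular, computably compact effective topological space: the covering in the last part of Step 1 is finite and enumerable. The effective Urysohn metrization theorem, reconstructed in our setting, then produces a metric $d$ compatible with the topology. This metric is given as a weighted sum $\sum 2^{-i}|f_i(x)-f_i(y)|$ of computable Urysohn functions attached to the computable pairs of formally nested shadows. Values of $d$ on special points are computable because each $f_i$ is evaluated on an eventually periodic ray by following it far enough to decide shadow membership. Computable compactness follows from the uniform diameter bound: a single special point in each shadow of level $n$ gives a $2^{-k}$-net once $n$ is a computable function of $k$.

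\textbf{Step 3 (action).} For a generator $s$ and a special point $\xi=[uv^\infty]$, the point $s\xi$ is the endpoint of $suv^\infty$, again eventually periodic. We output the special point given by the shortlex normal form of $su$ followed by $v^\infty$. This is literally the same boundary point, so it is an exact special point and no approximation is needed. More generally, to approximate $s\xi$ within $\epsilon$ we only need to know which level-$n$ shadow contains $s\xi$, and this is determined by a finite prefix, by the thin triangles property.

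\textbf{Main obstacle.} The hardest part is Step 1(b),(c) together with the uniformity in Step 2. Formal inclusion and disjointness of shadows must be decidable and at the same time must be faithful enough that the Urysohn functions separate points. This requires carefully chosen fuzz constants (multiples of $\delta$), so that the formal relations approximate genuine topological containment.
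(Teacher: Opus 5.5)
Your overall architecture is the same as the paper's: build $\partial G$ as a computable topological space from geodesic-ray data, then apply an effective Urysohn metrization. However, the steps that carry the weight are asserted rather than proved, and two of them fail as stated.

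\textbf{Step 1: the basic opens and their relations.} The basic open sets are never actually defined. The set of $\xi$ admitting \emph{some} ray from $e$ that passes within $2\delta$ of $w$ is $\pi$ of a clopen subset of the compact ray space $P$, hence closed. ``Enlarged slightly so that the sets are open'' does not specify anything.

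The paper instead uses the \emph{universal} shadows $B_{n,F}=\{\xi : T_n(\xi)\subseteq F\}=(U_F)^\ast$. These are open because $\sim$ is closed and $P$ is compact (Lemma~\ref{lem:basis}). Realizing $\partial G$ as $P$ modulo the effectively closed relation ``$d(\alpha(t),\beta(t))\le 2\delta$ for all $t$'' (Fact~\ref{lem:ft}) gives computable compactness and overtness via Lemma~\ref{lem:quotient}. Computable Hausdorffness comes from a level $n$ with $d(\alpha(n),\beta(n))>4\delta$ (Proposition~\ref{prop:hausdorff}).

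Your claim that membership, inclusion and disjointness are ``certified by comparisons of finite geodesic segments of length $n+C\delta$'' is unfounded. Whether $\xi$ lies in such a set is a condition on \emph{all} rays asymptotic to $\xi$, and asymptoticity constrains the whole tail, so no bounded-length comparison decides it. Your picture of ``finite-to-one'' identifications is also wrong. In $\mathbb{Z}\times\mathbb{Z}/2=\langle a,c\rangle$ with generators $a^{\pm1},(ac)^{\pm1}$, every infinite word in $\{a,ac\}$ is a geodesic ray to the same endpoint, so fibres can be uncountable.

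This is exactly where the paper works hardest. It builds a safety automaton $\mathcal{D}_{n,F}$ that reads an entire ray while tracking the set of $2\delta$-fellow-travelling partners through cone types and offsets in $B(e,2\delta)$ (Proposition~\ref{lem:key}). It then decides $B_{n,F}\subseteq B_{n',F'}$ via decidability of inclusion of safety $\omega$-languages (Theorem~\ref{inclusiondecidable}). Even a merely sound c.e.\ ``formal'' inclusion would require soundness and richness proofs that you do not give. You flag this as the main obstacle but supply no mechanism for it.

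\textbf{Step 3: the action.} Suppose special points are eventually periodic \emph{geodesic} rays, as your setup suggests. Then $\mathrm{shortlex}(su)\,v^\infty$ is generally not geodesic: already in $F_2$, with $\xi=[a^\infty]$ and $s=a^{-1}$, you get $a^{-1}a^\infty$. So nothing ``exact'' has been produced. If instead special points are all paths $uv^\infty$ with $v$ of infinite order, then evaluating the $f_i$ ``by following far enough'' must handle non-geodesic paths, and that is not justified.

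Your fallback, that it suffices to know which level-$n$ shadow contains $s\xi$, works only if you can \emph{certify} that such a set lies inside a given basic open set, or has small $d$-diameter. Your ``diameters shrink uniformly in $n$'' gives no effective modulus relative to the Urysohn metric $d$.

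The paper supplies this in three pieces:
\begin{itemize}
\item The transport estimate \eqref{eq:transport} (from Lemma~\ref{lem:basepoint}) places $g\xi$ in sets $B_{n,F'_n}$ with $\diam F'_n$ bounded independently of $n$.
\item Lemma~\ref{lem:shrink} makes these sets a neighbourhood basis at $g\xi$.
\item The c.e.\ (indeed decidable) inclusion relation of Theorem~\ref{inclusiondecidable} turns this into a name of $g\xi$.
\end{itemize}
Likewise, computable compactness of the metric presentation comes from that of the topological space via Lemma~\ref{lem:small}, not from shadow levels.
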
 

Adams \cite{adams} proved that the canonical action of $G$ on $\partial G$ is amenable.  As a result, we obtain:

\begin{corollary}
If $G$ is an infinite hyperbolic group with presentation $G^\#$ associated to some finite generating set of $G$, then the corresponding presentation $C^*_r(G^\#)$ is computable.
\end{corollary}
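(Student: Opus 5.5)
The corollary follows by combining Theorem \ref{maintheorem} with Fact \ref{exacttheorem}; the plan is to verify that each hypothesis of that Fact holds. We take $X=\partial G$ with the canonical action $\alpha$ of $G$ on its Gromov boundary. For $G$ infinite hyperbolic, $\partial G$ is a nonempty compact metrizable space, and $\alpha$ acts by homeomorphisms.

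\textbf{Amenability of the action.} This is exactly Adams' theorem \cite{adams} (see also \cite[Theorem 5.3.15]{BO}). Amenability is a topological-dynamical notion, so it does not depend on which compatible metric we later place on $\partial G$.

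\textbf{Computability of $G^\#$.} Hyperbolic groups are finitely presented and have solvable word problem; Dehn's algorithm works given a finite presentation and a hyperbolicity constant $\delta$. As remarked after the definition of presented groups, solvability of the word problem does not depend on the generating set. Hence the presentation $G^\#$ induced by any finite generating set is computable.

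\textbf{The boundary and the action.} Theorem \ref{maintheorem} supplies a computable presentation $\partial G^\#$ on which the action $G^\#\curvearrowright\partial G^\#$ is computable. Fact \ref{exacttheorem} additionally requires $\partial G^\#$ to be computably compact. The statement of Theorem \ref{maintheorem} as given here says only ``computable presentation,'' but the introduction's version promises a computably compact one. I would therefore invoke the theorem in that stronger form, proven in the later sections.

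This last point is the only real issue. The corollary is formal once one confirms that the presentation built in the proof of Theorem \ref{maintheorem} is computably compact. I would check that its construction yields, uniformly in $k$, a finite list of special points whose $2^{-k}$-balls cover $\partial G$, for instance from finitely many cylinder-type sets indexed by group elements of bounded word length.

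With all hypotheses verified, Fact \ref{exacttheorem} gives that $C^*_r(G^\#)$ is computable. The argument passes through the crossed product $C(\partial G)\rtimes_\alpha G$ and the computable canonical embedding $C^*_r(G^\#)\hookrightarrow (C(\partial G)\rtimes_\alpha G)^\#$. Since every step is effective from a finite presentation of $G$ and $\delta$, the result is uniform in those data.
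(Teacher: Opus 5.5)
Your proposal is correct and takes the same route as the paper: Adams' amenability of $G\curvearrowright\partial G$, the computable word problem, and Theorem \ref{maintheorem} together verify the hypotheses of Fact \ref{exacttheorem}. The computable compactness you flag is established in the unnumbered theorem following Proposition \ref{prop:hausdorff}, which gets it from Proposition \ref{Piscompcomp}, Lemma \ref{lem:quotient} and the effective metrization Theorem \ref{thm:metrize}, rather than from an explicit cylinder-set cover.
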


Fact \ref{exacttheorem} leads to the following natural question:

\begin{question}
Which exact groups admit computable presentations and computable, amenable actions on computably compact spaces?
\end{question}

\section{Motivation:  the case of the free group}\label{freesection}

Before diving in to the proof of Theorem \ref{maintheorem}, we explain the much easier case of the free group.  Consider the free group $\mathbb F_2$ freely generated by $a$ and $b$ and let $\partial \mathbb{F}_2$ denote its \emph{boundary} which, in this case, simply consists of all infinite reduced words in $a^{\pm 1},b^{\pm 1}$ (equivalently, the set of infinite geodesic words).  The boundary $\partial \mathbb{F}_2$ is naturally homeomorphic to the Cantor set.  We treat $\partial \mathbb{F}_2$ as a metric space using the prefix metric:  $d(\alpha,\beta)=2^{-k}$, where $k$ is minimal with the property $\alpha(k)\not=\beta(k)$.  Equipped with this metric, $\partial \mathbb{F}_2$ is in fact a computable metric space where the special points are the infinite reduced words that are eventually constant; this presentation is easily seen to be computably compact.  Finally, the natural action of $\mathbb{F}_2$ on $\partial \mathbb{F}_2$ given by concatenating and reducing is also easily seen to be a computable action.  Thus, letting $\mathbb{F}_2^\#$ denote the obvious standard presentation of $\mathbb{F}_2$ (associated to the finite generating set $\{a,b\}$), Fact \ref{exacttheorem} above allows us to conclude that $C^*_r(\mathbb{F}_2^\#)$ is computable.

A strategy similar to the above is available to us in the case of a hyperbolic group $G$.  The main catch is that the boundary $\bG$ of $G$ is not simply the set of infinite geodesic sequences; instead, many infinite geodesic sequences can represent the same element of $\bG$.  While it is still true that the set of infinite geodesic sequences is a computably compact space (see Proposition \ref{Piscompcomp} below), $G$ does not act on this space but rather its quotient $\bG$.  Although there is a very active study of so-called \emph{visual metrics} on $\bG$, none of them are obviously computable metrics in an appropriate sense.  As a result, we need to use the computably compact presentation on the space of infinite geodesic words to endow $\bG$ with a computably compact presentation for which the natural action of $G$ on $\bG$ is computable.  To do so requires a detour through some computable topology, which we now discuss.

\section{Some computable topology}

\begin{definition}\label{def:cts}
A \emph{computable topological space} is a pair $(X,(B_i)_{i\in\N})$, where $X$ is a $T_0$ topological space, $(B_i)_{i\in \N}$ is a numbered basis for its topology, and for which there is a c.e.\ relation $\Incl\subseteq\N^3$ which is both:
\begin{itemize}
\item \emph{sound}: $(i,j,k)\in\Incl$ implies $B_k\subseteq B_i\cap B_j$; and
\item \emph{rich}: whenever $x\in B_i\cap B_j$, there is $k$ with $x\in B_k$ and $(i,j,k)\in\Incl$.
\end{itemize}
\end{definition}

Soundness and richness are what allow one to compute with intersections: there is an algorithm such that, upon input $i,j\in \mathbb{N}$, enumerates a c.e. set $W_{i,j}\subseteq \N$ such that $B_i\cap B_j=\bigcup_{k\in W_{i,j}}B_k$. Taking $i=j$ shows that
\[
\{(i,k) : (k,k,i)\in\Incl\}
\]
is a c.e.\ subrelation of $\{(i,k):B_i\subseteq B_k\}$.

In what follows, we fix a computable topological space $(X,(B_i)_{i\in \N})$ with c.e. inclusion relation $\Incl$.  We often abuse notation and refer to a computable topological space simply by its underlying topological space, suppressing mention of the specified numbered basis.

An open set $U\subseteq X$ is \emph{effectively open} if $U=\bigcup_{i\in W}B_i$ for some c.e.\ $W\subseteq \N$; such a $W$ is called a \emph{name} of $U$. A set is \emph{effectively closed} if its complement is effectively open.

A \emph{name of a point} $x$ is an enumeration of $\{i\in \N:x\in B_i\}$; $x$ is \emph{computable} if it has a computable name. A map $f:X\to Y$ is \emph{computable} if there is an algorithm such that, upon input a name of $x$, returns a name of $f(x)$; equivalently, if $f^{-1}(B^Y_j)$ is effectively open, uniformly in $j$. Note that computable maps are continuous.

\begin{example}\label{msexample}
Suppose $X^\#=(X,d,(x_n)_{n\in \mathbb{N}})$ is a computable metric space.  From it one can obtain a computable topological space $(X,(B_i)_{i\in \mathbb{N}})$ whose numbered basis is an effective enumeration of the open balls with centers coming from the sequence $(x_n)_{n\in \mathbb{N}}$ and rational radii.  In what follows, we will always treat computable metric spaces as computable topological spaces in this way.  Note that the notions defined above (effectively open, effectively closed, computable point, computable map) agree with the corresponding notions from computable metric space theory, whence no confusion can arise.
\end{example}

In what follows, we let $\langle \cdots\rangle$ denote any effective coding function $\N^{<\N}\to \N$.  We also use the easy fact that the category of computable topological spaces (with computable maps as morphisms) admits products.

\begin{definition}\leavevmode
Let $(X,(B_i)_{i\in \mathbb{N}})$ be a computable topological space.  We say that $X$ is
\begin{itemize}
\item \emph{computably compact} if it is compact and
\[
\Cov:=\{\langle i_1,\dots,i_n\rangle : X=B_{i_1}\cup\cdots\cup B_{i_n}\}
\]
is c.e. 
\item \emph{computably Hausdorff} if the complement of the diagonal is effectively open in $X\times X$, that is, there is a c.e.\ set $W$ of pairs $(i,j)$ such that $$\{(x,y)\in X\times X:x\ne y\}=\bigcup_{(i,j)\in W}B_i\times B_j.$$
\item \emph{computably overt} if $\{i:B_i\ne\emptyset\}$ is c.e.
\end{itemize}
\end{definition}

Note the following easy observations:
\begin{itemize}
\item $X$ is computably compact if and only if from an enumeration of a basic cover, one can compute a finite subcover.
\item $X$ is computably overt if and only if:  given a name of an open $U$, the statement ``$U\ne\emptyset$'' is semi-decidable.
\item Any computable metric space, viewed as a computable topological space as in Example \ref{msexample}, is automatically computably Hausdorff and computably overt.  Moreover, it is computably compact qua computable metric space if and only if it is computably compact qua computable topological space.  Similar remarks can be made about computable points and computable maps.
\end{itemize}

\begin{lemma}\label{lem:proj}
Suppose that $X$ and $Y$ are computable topological spaces with $X$ computably compact.  Further suppose that $Z\subseteq X\times Y$ is effectively closed.  Then $\proj_Y(Z)$ is effectively closed, uniformly in all of the relevant data.
\end{lemma}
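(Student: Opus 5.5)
The plan is to describe the complement of $\proj_Y(Z)$ as a c.e. union of basic open sets of $Y$, using computable compactness of $X$ to certify membership.

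Write $B_i^X$ and $B_j^Y$ for the numbered bases. Since $Z$ is effectively closed, its complement in $X\times Y$ has a name. The product topology has basic sets $B_i^X\times B_j^Y$, so we may fix a c.e. set $W$ of pairs with
\[
(X\times Y)\setminus Z=\bigcup_{(i,j)\in W}B_i^X\times B_j^Y .
\]
A point $y$ lies outside $\proj_Y(Z)$ exactly when $X\times\{y\}\subseteq (X\times Y)\setminus Z$.

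We enumerate the set $V$ of indices $k$ for which there exist finitely many pairs $(i_1,j_1),\dots,(i_n,j_n)\in W$ with two properties:
\begin{itemize}
\item $\langle i_1,\dots,i_n\rangle\in\Cov$;
\item for each $m$, the pair $(k,j_m)$ is enumerated in the c.e. subrelation of $\{(k,j):B_k^Y\subseteq B_j^Y\}$ supplied by $\Incl$. These are the pairs $(k,j)$ with $(j,j,k)\in\Incl$.
\end{itemize}
Both conditions are c.e., because $W$, $\Cov$ and $\Incl$ are all c.e. Hence $V$ is c.e., uniformly in the names of $Z$, $\Cov$ and $\Incl$.

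Soundness: suppose $k\in V$, $y\in B_k^Y$ and $x\in X$. Some $B_{i_m}^X$ contains $x$, and $y\in B_k^Y\subseteq B_{j_m}^Y$. So $(x,y)\notin Z$, which gives $B_k^Y\cap\proj_Y(Z)=\emptyset$.

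Completeness: fix $y\notin\proj_Y(Z)$. For each $x\in X$ the point $(x,y)$ lies in some $B_{i}^X\times B_{j}^Y$ with $(i,j)\in W$. By compactness of $X$ (tube-lemma style), finitely many of these pairs $(i_1,j_1),\dots,(i_n,j_n)$ already cover $X\times\{y\}$. Then $\langle i_1,\dots,i_n\rangle\in\Cov$ and $y\in B_{j_1}^Y\cap\dots\cap B_{j_n}^Y$.

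The remaining task is to find a single index $k$ with $y\in B_k^Y$ and $(j_m,j_m,k)\in\Incl$ for every $m$. This is the step that uses care about the intersection machinery. We iterate richness pairwise:
\begin{itemize}
\item richness gives $k_1$ with $y\in B_{k_1}^Y$ and $(j_1,j_2,k_1)\in\Incl$;
\item then $k_2$ with $(k_1,j_3,k_2)\in\Incl$, and so on, ending with an index $k'$.
\end{itemize}
This chain does not directly yield the triples $(j_m,j_m,k')$ required by the definition of $V$.

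To avoid the issue, redefine $V$ to allow exactly such chains: $k\in V$ if there are witnesses in $W$ as above together with a finite $\Incl$-chain of intersections ending at $k$, built from the $j_m$. This condition is still c.e. Soundness of $\Incl$ at each step gives $B_k^Y\subseteq\bigcap_m B_{j_m}^Y$, and richness at each step gives completeness.

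Hence $Y\setminus\proj_Y(Z)=\bigcup_{k\in V}B_k^Y$ with $V$ c.e., so $\proj_Y(Z)$ is effectively closed, uniformly in all the data. The main obstacle is this bookkeeping with $\Incl$; the compactness step itself is routine.
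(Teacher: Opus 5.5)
Your proposal is correct and is essentially the paper's argument: $y\notin\proj_Y(Z)$ exactly when finitely many boxes from the name of $(X\times Y)\setminus Z$ cover $X\times\{y\}$, and computable compactness of $X$ turns the search for such a finite family into an enumeration. Your $\Incl$-chain bookkeeping makes explicit the finite-intersection step $\bigcap_m B^Y_{j_m}$, which the paper's one-line proof leaves implicit. You are also right to discard your first version of $V$ with the diagonal triples $(j_m,j_m,k)$, since that version need not be complete.
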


\begin{proof}
Note that $y\notin\proj_Y(Z)$ if and only if $X\times\{y\}\subseteq(X\times Y)\setminus Z$. By compactness of $X$ this holds if and only if finitely many basic boxes cover $X\times\{y\}$ inside the complement, and the search for such a finite family is an enumeration.
\end{proof}

The following is probably well known; we include a proof for the sake of completeness.

\begin{lemma}\label{lem:image}
Suppose that $X$ and $Y$ are computable topological spaces and $f:X\to Y$ is computable and surjective.
\begin{enumerate}
    \item If $X$ is computably compact, then so is $Y$.
    \item If $X$ is computably overt, then so is $Y$.
\end{enumerate}  
\end{lemma}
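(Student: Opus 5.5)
For (1), the goal is to show that $\Cov_Y$ is c.e. First, $Y=f(X)$ is compact because $f$ is continuous and surjective. Since $f$ is computable, each preimage $f^{-1}(B^Y_j)$ is effectively open, uniformly in $j$. So there is a uniformly c.e. family $W_j$ with $f^{-1}(B^Y_j)=\bigcup_{i\in W_j}B^X_i$.

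By surjectivity, $Y=B^Y_{j_1}\cup\cdots\cup B^Y_{j_n}$ holds if and only if $X=f^{-1}(B^Y_{j_1})\cup\cdots\cup f^{-1}(B^Y_{j_n})$. The forward direction uses only that preimages preserve unions. The reverse uses surjectivity: each $y=f(x)$ lies in some $B^Y_{j_m}$ because $x$ lies in the corresponding preimage.

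By compactness of $X$, the latter condition holds if and only if there are finitely many indices $i_1,\dots,i_r$ such that each $i_s$ lies in $W_{j_{m_s}}$ for some $m_s\le n$, and $\langle i_1,\dots,i_r\rangle\in\Cov_X$. This gives the procedure: dovetail over tuples $\langle j_1,\dots,j_n\rangle$, over finite lists of indices together with witnesses that each index was enumerated into the appropriate $W_{j_m}$, and over the enumeration of $\Cov_X$. Output $\langle j_1,\dots,j_n\rangle$ whenever such a certificate appears. This enumerates $\Cov_Y$.

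For (2), note that by surjectivity $B^Y_j\neq\emptyset$ if and only if $f^{-1}(B^Y_j)\neq\emptyset$, and this holds if and only if some $i\in W_j$ has $B^X_i\neq\emptyset$. Both $W_j$ (uniformly) and $\{i:B^X_i\ne\emptyset\}$ are c.e., so dovetailing enumerates $\{j:B^Y_j\neq\emptyset\}$.

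The only point needing care is the uniformity of the names $W_j$ of the preimages. This should follow from the stated equivalence for computable maps: given a name-transformer, $x\in f^{-1}(B_j^Y)$ exactly when the machine outputs $j$ after reading finitely much of a name of $x$. That finite portion determines a finite set of basic indices, and their intersection can be rewritten as a c.e. union of basic sets via the soundness and richness of $\Incl$. I take this standard equivalence as given, as the paper does.
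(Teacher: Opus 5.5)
Your proof is correct and takes the same route as the paper. You use surjectivity to reduce covering or nonemptiness in $Y$ to the uniformly effectively open preimages $f^{-1}(B^Y_j)$ in $X$, and then invoke computable compactness (respectively, overtness) of $X$; you additionally spell out the dovetailing and note that $Y=f(X)$ is compact by continuity, which the paper leaves implicit.
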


\begin{proof}
For (1), by surjectivity, $Y=\bigcup_t B^Y_{j_t}$ if and only if $X=\bigcup_t f^{-1}(B^Y_{j_t})$, and each $f^{-1}(B^Y_{j_t})$ is effectively open uniformly because $f$ is computable; now apply computable compactness of $X$. Similarly, for (2), $B^Y_j\ne\emptyset$ if and only if $f^{-1}(B^Y_j)\ne\emptyset$, which is semi-decidable by the computable overtness of $X$.
\end{proof}

The next two results are not needed until Appendix \ref{metrizationappendix}.

\begin{lemma}\label{lem:graph}
Let $f:X\to Y$ be computable with $Y$ computably Hausdorff. Then the graph of $f$ is effectively closed in $X\times Y$. If moreover $X$ is computably compact, then $f$ maps effectively closed sets to effectively closed sets, and uniformly so.
\end{lemma}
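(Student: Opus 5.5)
For the first claim, I will write the complement of the graph as an effectively open set. A pair $(x,y)$ lies off the graph exactly when $f(x)\neq y$. By computable Hausdorffness of $Y$ there is a c.e.\ set $W$ of pairs $(i,j)$ with $\{(y',y):y'\neq y\}=\bigcup_{(i,j)\in W}B^Y_i\times B^Y_j$. Hence
\[
(X\times Y)\setminus \mathrm{graph}(f)=\bigcup_{(i,j)\in W} f^{-1}(B^Y_i)\times B^Y_j .
\]
Computability of $f$ makes each $f^{-1}(B^Y_i)$ effectively open, uniformly in $i$, with a name $V_i$. The complement of the graph is then the union of the basic boxes $B^X_k\times B^Y_j$ over all $(i,j)\in W$ and $k\in V_i$. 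This is a c.e.\ union of basic open sets of the product, using the product numbering of the basis. So the graph is effectively closed.

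For the second claim, let $C\subseteq X$ be effectively closed and assume $X$ is computably compact. I will write
\[
f(C)=\proj_Y\bigl((C\times Y)\cap \mathrm{graph}(f)\bigr).
\]
The set $C\times Y$ is effectively closed, since its complement is $(X\setminus C)\times Y$. The union of the complements of $C\times Y$ and of the graph is effectively open, so the intersection $(C\times Y)\cap\mathrm{graph}(f)$ is effectively closed. Lemma \ref{lem:proj} then gives that its projection to $Y$ is effectively closed. Every step is uniform in a name of the complement of $C$, so the map on effectively closed sets is uniform.

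The only delicate point is bookkeeping. The product basis is indexed by pairs, and writing $Y$ as an effectively open union of basic sets needs some care. Here I would rely on the stated fact that computable topological spaces have products, with basis the boxes $B_i\times B_j$. A basis covers $Y$, so $Y$ is the union of all $B^Y_j$ and is effectively open. No compactness of $Y$ is needed.
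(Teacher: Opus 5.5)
Your proposal is correct and matches the paper's proof: the complement of the graph is written as $\bigcup_{(i,j)\in W} f^{-1}(B^Y_i)\times B^Y_j$, and $f(C)$ is obtained as $\proj_Y\big(\mathrm{graph}(f)\cap(C\times Y)\big)$ using Lemma~\ref{lem:proj}. Your extra bookkeeping about the product basis and why $C\times Y$ is effectively closed spells out details the paper leaves implicit.
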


\begin{proof}
With $W$ the Hausdorff witness set for $Y$, we have
\[
\{(x,y)\in X\times Y : y\ne f(x)\}=\bigcup_{(j,k)\in W} f^{-1}(B^Y_j)\times B^Y_k,
\]
which is effectively open. For the second claim, $f(C)=\proj_Y\big(\mathrm{graph}(f)\cap(C\times Y)\big)$ and Lemma~\ref{lem:proj} applies.
\end{proof}

\begin{corollary}\label{cor:inverse}
A computable bijection from a computably compact space to a computably Hausdorff space has computable inverse.
\end{corollary}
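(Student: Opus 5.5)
The plan is to show that $f^{-1}:Y\to X$ is computable in the sense of the paper, namely that $(f^{-1})^{-1}(B^X_i)=f(B^X_i)$ is effectively open in $Y$, uniformly in $i$. Let $f:X\to Y$ be the computable bijection, with $X$ computably compact and $Y$ computably Hausdorff. Since $f$ is a bijection, we have $f(B^X_i)=Y\setminus f(X\setminus B^X_i)$. It therefore suffices to show that $f(X\setminus B^X_i)$ is effectively closed in $Y$, uniformly in $i$.

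The complement $X\setminus B^X_i$ is effectively closed, uniformly in $i$, because its complement $B^X_i$ is effectively open with the trivial name $\{i\}$. The second part of Lemma~\ref{lem:graph} applies here: $f$ is computable, $Y$ is computably Hausdorff, and $X$ is computably compact. So $f$ maps effectively closed sets to effectively closed sets, uniformly. Hence $f(X\setminus B^X_i)$ is effectively closed uniformly in $i$, and its complement $f(B^X_i)$ is effectively open uniformly in $i$. That is exactly the statement that $f^{-1}$ is computable.

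The only point needing care is the uniformity. The name of the effectively open complement of $f(X\setminus B^X_i)$ must be produced effectively from $i$. This follows from the uniformity clauses in Lemma~\ref{lem:graph} and Lemma~\ref{lem:proj}: the witness sets are obtained by enumerating finite covers, and that enumeration depends computably on the input names. Apart from checking this, the argument is a direct application of the preceding lemma, and there is no genuine obstacle.
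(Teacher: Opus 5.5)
Your proposal is correct and follows the paper's proof essentially verbatim: bijectivity gives $f(B^X_i)=Y\setminus f(X\setminus B^X_i)$, and the second part of Lemma~\ref{lem:graph} (via Lemma~\ref{lem:proj}) makes $f(X\setminus B^X_i)$ effectively closed uniformly in $i$. Your remark on uniformity is the same point the paper relies on implicitly.
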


\begin{proof}
For basic open $B_i\subseteq X$, bijectivity gives $f(B_i)=Y\setminus f(X\setminus B_i)$, and $f(X\setminus B_i)$ is effectively closed by Lemma~\ref{lem:graph}. So $(f^{-1})^{-1}(B_i)=f(B_i)$ is effectively open uniformly in $i$.
\end{proof}

Let $X$ be a computable topological space and $\sim$ an equivalence relation on $X$ which is effectively closed as a subset of $X\times X$. Set $Y=X/\!\sim$, viewed as a topological space equipped with the quotient topology, and let $\pi:X\to Y$ denote the canonical projection map.  For $C\subseteq X$, set $$R[C]:=\{x\in X \ : \ \text{ there is } c\in C \text{ such that}\ c\sim x\},$$ the saturation of $C$. For open $U\subseteq X$, set
\[
U^\ast:=\{\pi(x)\in Y : R[x]\subseteq U\}=Y\setminus\pi(X\setminus U),
\]
the largest subset of $Y$ whose $\pi$-preimage lies in $U$. The star operation is monotone, preserves finite intersections, and satisfies $V=(\pi^{-1}V)^\ast$ for every open $V\subseteq Y$, whence every open set is of the form $U^*$ for an open subset $U$ of $X$.

% This basis must be indexed by finite unions of basic sets of $X$, not by single ones. A fibre $R[x]$ is compact but in general lies inside no single $B_i$, and $U^\ast$ is empty unless $U$ engulfs whole fibres. 

For finite $A\subseteq\N$, set
\[
B_A:=\bigcup_{i\in A}B_i,\qquad C_A:=(B_A)^\ast .
\]

\begin{lemma}\label{lem:quotient}
Let $X$ be computably compact and $\sim$ an effectively closed equivalence relation on $X$ for which $Y:=X/\sim$ is $T_0$. Then the following hold:
\begin{enumerate}
    \item $(C_A)_{A\ \mathrm{finite}}$ is a numbered basis making $Y$ a computable topological space.
    \item The canonical projection $\pi:X\to Y$ is computable.
    \item $Y$ is computably compact.
    \item If $X$ is computably overt, so is $Y$.
\end{enumerate}
\end{lemma}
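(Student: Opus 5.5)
The plan is to verify the four clauses in order, using the key observation that for finite $A$, the set $\pi(X\setminus B_A)$ is the image of a compact set. Hence membership $\pi(x)\in C_A$ is governed by the effectively closed set $R[X\setminus B_A]$. By Lemma \ref{lem:proj} applied to $Z=\{(c,x): c\sim x,\ c\notin B_A\}$, which is effectively closed in $X\times X$ uniformly in $A$, the saturation $R[X\setminus B_A]$ is effectively closed uniformly in $A$. Its complement $\pi^{-1}(C_A)$ is therefore effectively open uniformly in $A$.

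\textbf{Basis.} Each $C_A$ is open because $\pi^{-1}(C_A)=X\setminus R[X\setminus B_A]$ is open (a saturation of a closed set in a compact space is closed, since $\sim$ is closed). To see that the $C_A$ form a basis, take $V$ open in $Y$ and $y=\pi(x)\in V$. The fibre $R[x]$ is closed in compact $X$, hence compact, and lies in $\pi^{-1}V$. Cover it by finitely many $B_i\subseteq \pi^{-1}V$ with indices $A$. Then $y\in C_A\subseteq (\pi^{-1}V)^\ast=V$.

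\textbf{Inclusion relation.} Define $\Incl$ by putting $(A,A',A'')$ in it when two conditions are witnessed. First, $X\setminus R[X\setminus B_{A''}]\subseteq B_A\cap B_{A'}$, which is equivalent to $C_{A''}\subseteq C_A\cap C_{A'}$ because all three sets are saturated in the appropriate sense. Second, for the c.e. search, we witness it via $R[X\setminus B_{A''}]\cup B_A= X$, and similarly for $A'$. Since $R[X\setminus B_{A''}]$ is closed, not open, I would instead use a finite-cover certificate: enumerate finite sets $D$ with $B_D\supseteq R[X\setminus B_{A''}]$ (semi-decidable by computable compactness, since it says $X=B_D\cup \pi^{-1}C_{A''}$, a cover by effectively open sets) and $B_D\cap$... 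This is getting delicate. The cleaner route is to let $\Incl$ consist of triples with $B_{A''}\subseteq B_A\cap B_{A'}$ certified via the c.e. inclusion relation of $X$ applied basic-set-wise. Soundness then follows from monotonicity of $\ast$ and preservation of intersections. Richness requires that for $y\in C_A\cap C_{A'}$ we can find $A''$ covering $R[x]$ by basic sets each certified inside some $B_i\cap B_j$ with $i\in A$, $j\in A'$. This uses richness in $X$ pointwise along the compact fibre, followed by compactness of the fibre. A point of the fibre lies in some $B_i$ and some $B_j$, but certified sub-basic sets lie only in $B_i\cap B_j$ for single indices, which suffices since $B_A\cap B_{A'}=\bigcup B_i\cap B_j$. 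The $T_0$ property is assumed.

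\textbf{Remaining clauses.} Clause (2) is exactly the uniform effective openness of $\pi^{-1}(C_A)$ established at the start. Clause (3) then follows from Lemma \ref{lem:image}(1), using that $\pi$ is surjective and that $Y$ is compact as a continuous image. Clause (4) follows from Lemma \ref{lem:image}(2).

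The main obstacle is the richness and soundness of $\Incl$ in clause (1). There the fibres must be handled by compactness while keeping the certification c.e., relying on the fact that inclusions between the $B_A$'s transfer to the $C_A$'s by monotonicity.
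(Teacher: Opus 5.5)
Your proposal is correct and follows the paper's proof essentially step for step. It uses the same fibre-compactness basis argument, the same identity $\pi^{-1}(C_A)=X\setminus R[X\setminus B_A]$ made effective by Lemma~\ref{lem:proj}, the same basic-set-wise certified relation $\Incl_Y$ with soundness from monotonicity and meet-preservation of $\ast$, and Lemma~\ref{lem:image} for (3) and (4). Your richness argument (apply richness of $\Incl_X$ at each point of the compact fibre $R[x]$, then take a finite subcover) is actually more explicit than the paper's one-line appeal to ``the basis argument''; in a write-up you should simply delete the abandoned first attempt at defining $\Incl_Y$.
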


\begin{proof}
Consider $\pi(x)\in V$ with $V\subseteq Y$ open. Then $\pi^{-1}(V)$ is a saturated open set containing the compact set $R[x]$, whence finitely many basic sets contained in $\pi^{-1}(V)$ cover $R[x]$; let $A$ index them. Then $R[x]\subseteq B_A\subseteq\pi^{-1}(V)$, so $\pi(x)\in C_A\subseteq V$.

Next note that $\pi^{-1}(C_A)=X\setminus R[X\setminus B_A]$. For effectively closed $C$, we have
\[
R[C]=\proj_2\big({\sim}\cap(C\times X)\big),
\]
which is effectively closed uniformly by Lemma~\ref{lem:proj}.  Hence $\pi^{-1}(C_A)$ is effectively open uniformly in $A$, which is computability of $\pi$.  Note in particular that this shows that each $C_A$ is open.

Declare $(C_A,C_B,C_C)\in\Incl_Y$ when $\Incl_X$ certifies $B_C\subseteq B_A\cap B_B$, that is, when, for each $k\in C$, there are $i\in A$ and $j\in B$ such that $(i,j,k)\in \Incl_X$.  Note that $\Incl_Y$ is indeed c.e. Soundness holds because $\ast$ is monotone and meet-preserving. Richness holds because the basis argument above, applied to $V=C_A\cap C_B$, produces exactly such a $C$.  

At this point, we have established (1) and (2).  (3) and (4) follow from Lemma~\ref{lem:image}.
\end{proof}

Crucial for our purposes is the following result:

\begin{theorem}\label{thm:metrize}
A computably compact, computably Hausdorff, computably overt computable topological space admits a computable metric space presentation computably equivalent to the given one.
\end{theorem}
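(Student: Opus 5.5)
We follow the classical Urysohn embedding into the Hilbert cube, made effective. Let $(X,(B_i)_{i\in\N})$ be computably compact, computably Hausdorff and computably overt, and let $W$ witness computable Hausdorffness.

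\emph{Separating functions.} We effectively enumerate the pairs $(i,j)$ for which there is a certificate that $\overline{B_i}\subseteq B_j$, in the following form: some finite family of basic sets covers $X$, each member either lies in $B_j$ (certified via $\Incl$) or is certified disjoint from $B_i$. Disjointness is certified by pairs from $W$, where we use that $B_i\times B_k$ lies in the off-diagonal union exactly when $B_i\cap B_k=\emptyset$. The search for such covers is c.e. by computable compactness together with Lemma~\ref{lem:proj}. Regularity of compact Hausdorff spaces makes this list rich enough: the enumerated pairs separate points, and every basic neighbourhood of $x$ contains some $B_i\ni x$ with $(i,j)$ listed. For each listed pair we build, uniformly, a computable Urysohn function $f_{ij}\colon X\to[0,1]$ that is $1$ on $\overline{B_i}$ and $0$ off $B_j$. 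We do this by the usual dyadic interpolation, inductively finding effectively open sets $U_r$ for dyadic $r$ together with certified closure inclusions. Each step repeats the compactness search above, with the disjointness checks made effective using overtness and the Hausdorff witness. We define $f_{ij}(x)=\inf\{r : x\in U_r\}$, which is computable because both $\{x: f_{ij}(x)<q\}$ and $\{x : f_{ij}(x)>q\}$ are effectively open uniformly in $q$.

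\emph{Metric and special points.} We set $d(x,y)=\sum_n 2^{-n}|f_n(x)-f_n(y)|$, where $(f_n)$ enumerates the $f_{ij}$. Then $F=(f_n)_n\colon X\to[0,1]^\N$ is a computable injection into the Hilbert cube; injectivity holds because the listed pairs separate points. By Corollary~\ref{cor:inverse} it has computable inverse on its image, so $F$ is a homeomorphism onto its image, and $d$ induces the original topology. For special points we use overtness: for each nonempty $B_i$, enumerated c.e., we compute a point $x_i\in B_i$ by nested search. We repeatedly find a basic $B_k$ certified to satisfy $\overline{B_k}\subseteq$ the current set and certified nonempty, with $d$-diameter certified below $2^{-m}$ (using upper bounds on the $f_n$, which are computable from the names). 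These points are dense and computable, so the values $d(x_m,x_n)$ are computable reals, uniformly in $m,n$, and the presentation is computable.

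\emph{Equivalence and compactness.} The identity $X\to(X,d)$ is computable because the $f_n$ are computable. It is a computable bijection from a computably compact space to a computably Hausdorff space, so Corollary~\ref{cor:inverse} gives a computable inverse. Computable compactness of the metric presentation then follows from Lemma~\ref{lem:image}(1).

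The main obstacle is the first step: carrying out the Urysohn construction uniformly and effectively. In particular, one must certify closure inclusions $\overline{U}\subseteq V$ using only c.e. information; this is where computable compactness and Hausdorffness are combined via Lemma~\ref{lem:proj}. If the direct interpolation proves awkward, we would instead try $f_{ij}(x)=\min(1,\text{distance-like sums})$ built from finite covers, or define $f$ as a uniform limit of computable step-approximations.
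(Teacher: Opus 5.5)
Your proposal is correct and follows essentially the same route as the paper. Your cover certificates for $\overline{B_i}\subseteq B_j$ are the paper's relation $\prec$ (a finite $E$ with $\mathrm{Ap}(A,E)$ and $X=B_B\cup B_E$), and from there both arguments use dyadic Urysohn interpolation, the metric $\sum_n 2^{-n}|f_n(x)-f_n(y)|$, special points from a nested overtness-driven search with certified small diameter (the paper's $\mathrm{Small}(k)$), and Corollary~\ref{cor:inverse} for the equivalence.

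The details you leave implicit are Lemmas~\ref{lem:refine}--\ref{lem:interp}. There, richness of the certificates and termination of the interpolation search come from the Hausdorff witness, compactness, and refinement via richness of $\Incl$, not from regularity or overtness; overtness only semi-decides nonemptiness, so it cannot certify disjointness. Your one variation, deducing computable compactness of $(X,d)$ from Lemma~\ref{lem:image}(1) rather than building covers from $\mathrm{Small}(k)\cap\Omega$, is also fine.
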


Let us be more precise about the statement of the previous theorem:  Suppose that $(X,(B_i)_{i\in \mathbb{N}})$ is a computably compact, computably Hausdorff, computably overt computable topological space.  Let $X^\#$ be the computable metric space guaranteed to exist by the theorem.  Let $(X,(C_i)_{i\in \mathbb{N}})$ be the computable topological space associated to $X^\#$ as in Example \ref{msexample}.  Then the conclusion of the theorem is that the identity map $X\to X$ is a computable map from $(X,(B_i)_{i\in \mathbb{N}})$ to $(X,(C_i)_{i\in \mathbb{N}})$ and vice-versa.  Note this automatically implies that $X^\#$ is a computably compact metric space presentation.

A version of Theorem \ref{thm:metrize} has appeared before in the literature in the context of type two effective theory \cite{GW}.  We prefer to give a proof that is more in line with our approach to computable analysis.  We give such a self-contained proof of Theorem \ref{thm:metrize} in Appendix \ref{metrizationappendix}.

Putting everything from this section together, we arrive at the following:

\begin{corollary}\label{cor:quotmetrize}
Let $X$ be a computably compact and computably overt computable topological space.  Let $\sim$ be an effectively closed equivalence relation. Equip the quotient space $Y = X/\sim$ with the above effective quotient presentation constructed from finite unions of basic opens. If $Y$ is computably Hausdorff, then $Y$ has a computably equivalent computably compact metric space presentation, and the quotient map $q:X\to Y$ is computable.
\end{corollary}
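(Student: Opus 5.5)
This corollary is an assembly of the results of this section; the plan is to check the hypotheses of Theorem \ref{thm:metrize} for $Y$ and then compose computable maps. First, I apply Lemma \ref{lem:quotient} to $X$ and $\sim$. That lemma also requires $Y$ to be $T_0$. This holds because $Y$ is assumed computably Hausdorff, and in particular Hausdorff. Here I would note that the diagonal-complement condition only refers to the basis $(C_A)$. The hypothesis that $Y$ is computably Hausdorff is meant with respect to the numbered basis $(C_A)_{A\ \mathrm{finite}}$. That this family is a basis does not use the $T_0$ property, as the first paragraph of the proof of Lemma \ref{lem:quotient} shows. So the Hausdorff condition makes sense before $T_0$ is invoked, and it yields $T_0$. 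Lemma \ref{lem:quotient} then gives four things. $(Y,(C_A))$ is a computable topological space. The projection $\pi = q$ is computable. $Y$ is computably compact. Since $X$ is computably overt, $Y$ is computably overt as well.

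Second, $Y$ is now computably compact, computably Hausdorff (by hypothesis) and computably overt. So Theorem \ref{thm:metrize} gives a computable metric presentation $Y^\#$ whose associated computable topological space $(Y,(D_i))$ is computably equivalent to $(Y,(C_A))$. As remarked after Theorem \ref{thm:metrize}, $Y^\#$ is then computably compact as a metric space. Computable compactness transfers across the computable identity map by Lemma \ref{lem:image}(1), and by the bulleted remark it agrees with metric computable compactness.

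Finally, I show that $q$ is computable as a map into $Y^\#$. It is the composite of $\pi\colon X\to(Y,(C_A))$, computable by Lemma \ref{lem:quotient}(2), with the computable identity $(Y,(C_A))\to(Y,(D_i))$. Composites of computable maps are computable, since preimages of basic opens pull back to uniformly effectively open sets.

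The only delicate point is the bookkeeping above: the hypothesis ``$Y$ is computably Hausdorff'' must refer to the quotient basis $(C_A)$, and the $T_0$ hypothesis of Lemma \ref{lem:quotient} must be discharged from it. Everything else is direct citation.
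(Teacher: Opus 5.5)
Your proposal is correct and follows the paper's route: the paper gives no separate proof and simply combines Lemma~\ref{lem:quotient} (computable quotient basis, computable $\pi$, computable compactness and overtness of $Y$) with Theorem~\ref{thm:metrize}, and your explicit derivation of the $T_0$ hypothesis from computable Hausdorffness is bookkeeping the paper leaves implicit. One small citation fix: concluding $T_0$ from the separation by $C_A\times C_B$ needs the sets $C_A$ to be open, and that is shown in the \emph{second} paragraph of the proof of Lemma~\ref{lem:quotient} (via $\pi^{-1}(C_A)=X\setminus R[X\setminus B_A]$), not the first, though that argument likewise never uses $T_0$.
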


\section{Computable presentation of the boundary of a hyperbolic group}\label{hyperbolic}

We begin by recalling some basic facts about hyperbolic groups to be used throughout this section.    

Fix a group $G$ with finite generating set $S$; for simplicity, we always assume that our generating sets are closed under inverse and do not contain the identity.  We set $\Gamma:=\Gamma(G,S)$ for the Cayley graph of $G$ with respect to $S$, which we view as a proper, geodesic metric space on which $G$ acts by isometries on the left. For $x\in G$, set $|x|:=d(e,x)$.  Set $S_n:=\{x\in G : |x|=n\}$ and $B(x,r):=\{y\in G:d(x,y)\le r\}$.

A geodesic triangle in $\Gamma$ is \emph{$\delta$-thin} if each side lies in the closed $\delta$-neighborhood of the union of the other two. $\Gamma$ is said to be \emph{$\delta$-hyperbolic} if all geodesic triangles in $\Gamma$ are $\delta$-thin and $G$ is said to be $\delta$-hyperbolic if $\Gamma(G,S)$ is $\delta$-hyperbolic for some finite generating set $S$.  We also say that $\Gamma$ is hyperbolic if it is $\delta$-hyperbolic for some $\delta>0$ and we say that $G$ is hyperbolic if $\Gamma(G,S)$ is hyperbolic for some generating set $S$.  Unlike $\delta$-hyperbolicity for a given $\delta$, hyperbolicity is a quasi-isometry invariant; since all Cayley graphs of a finitely generated group are quasi-isometric, we have that $G$ is hyperbolic if and only if $\Gamma(G,S)$ is hyperbolic for all generating sets $S$.

For the rest of this section, we suppose that $S$ is a finite generating set witnessing that $\Gamma:=\Gamma(G,S)$ is $\delta$-hyperbolic, with $\delta\in \N$.  (Since $\delta$-hyperbolic implies $\delta'$-hyperbolic for any $\delta'>\delta$, this is no loss of generality.)  We equip $G$ with the presentation $G^\#$ associated to $S$ as discussed in Section 2.

It is well-known that $G$ admits a so-called \emph{Dehn presentation} $G=\langle S|R\rangle$, which then allows one to use \emph{Dehn's algorithm} to solves the word problem.  In what follows, every conclusion is uniform in $\delta$ and $\langle S|R\rangle$.

For each $n\in \N$, the labelled ball $B_n:=B(e,n)$, as a finite edge-labelled graph together with the identification of which words name which vertices, is computable from $n$, and the predicate ``$w$ is a geodesic word'' is decidable.

The \emph{geodesic cone type} of $x\in G$ is
\[
\ct(x):=\{w\in S^* : |xw|=|x|+|w|\}.
\]
Cannon \cite{cannon} proved that in a $\delta$-hyperbolic group, there are only finitely many cone types. In fact, Cannon’s proof shows effectively that cone types are determined by their $k$-types for an integer $k$ computable from a hyperbolicity constant for $G$.  Let $W$ be the  finite state automaton with states the cone types, start state $\ct(e)$, and a transition $\ct(x)\xrightarrow{\,s\,}\ct(xs)$ (for $s\in S$) whenever $|xs|=|x|+1$. Every state is accepting. The finite paths of $W$ from the start state are exactly the geodesic words.

Delete from $W$ every state from which no infinite path emanates (equivalently, from which no cycle is reachable).  Write $W^{\mathrm{tr}}$ for the resulting \emph{trimmed} automaton and let $P$ denote the set of infinite paths of $W^{\mathrm{tr}}$ starting from the start state.  Using other language, $P$ is the set of geodesic rays begining at $e$; here, a \emph{geodesic ray} in $\Gamma$ is an isometric embedding $\alpha:[0,\infty)\to \Gamma$ and we say that $\alpha$ \emph{begins at} $\alpha(0)$ or that $\alpha$ is \emph{from} $\alpha(0)$. Note that $P\ne\emptyset$ if and only if $G$ is infinite. Set
\[
P_n := \{x\in S_n : x \text{ lies on some geodesic ray from } e\},
\]
a decidable subset of $S_n$ (being the level-$n$ vertices surviving the trimming procedure).  Since finite labelled balls and word lengths are computable from our Dehn presentation, the cone types and their transition table can be computed. Hence $W$ and $W^{\operatorname{tr}}$ are effectively constructible from $\delta$ and $\langle S|R\rangle$.

$P$ carries the prefix metric $d_P(\alpha,\beta):=2^{-\min\{n:\alpha(n)\ne\beta(n)\}}$.  We can view $P$ as a presented metric space whose special points are the set of eventually periodic elements (enumerated in some fixed effective way).  These are indeed dense for the following reason:  given a geodesic ray $\alpha\in P$ and $n\in \mathbb{N}$, the automaton must visit the same state twice after landing at $\ct(\alpha(n))$, say at $r>s\geq n$; if we let $w=\alpha(1)\cdots\alpha(n)$, $y=\alpha(n+1)\cdots \alpha(s)$, and $v=\alpha(s+1)\cdots\alpha(r)$, then $wyv^\infty$ is an eventually periodic word within distance $2^{-n}$ of $\alpha$.  (Here, $wyv^\infty$ stands for the geodesic ray $wyvvv\cdots\in P$.)  We refer to this presentation of $P$ as the \emph{standard presentation}.

\begin{proposition}\label{Piscompcomp}
The standard presentation of $P$ is computably compact.
\end{proposition}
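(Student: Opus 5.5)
Two things have to be checked: that the standard presentation of $P$ is a computable presented metric space, and that covers at every scale $2^{-k}$ can be computed. Both rest on the same fact: the trimmed automaton $W^{\mathrm{tr}}$ can be constructed effectively from $\delta$ and $\langle S|R\rangle$. Since $W^{\mathrm{tr}}$ is finite, we can decide for any finite word $w$ whether it labels a path of $W^{\mathrm{tr}}$ from the start state. Once such a path is read, it ends in a state from which an infinite path emanates, because every remaining state has this property. So the finite prefixes of elements of $P$ form a decidable, finitely branching, pruned tree $T$, and $P$ is exactly its set of infinite branches.

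For computability of the metric, first fix an effective enumeration of the special points as pairs $(u,v)$ with $uv^\infty\in P$. Membership $uv^\infty\in P$ is decidable: after reading $u$ and then at most $|W^{\mathrm{tr}}|$ further copies of $v$, the run must repeat a state, so it suffices to check finitely many steps in the finite automaton. Given two special points $\alpha=uv^\infty$ and $\beta=u'v'^\infty$, they are equal iff they agree on their first $\max(|u|,|u'|)+|v|\cdot|v'|$ letters, and otherwise the first index at which they differ lies below that bound. Hence $d_P(\alpha,\beta)$ is an exactly computable dyadic rational. (If the definition insists on $q>0$ when $\alpha=\beta$, output $2^{-k-1}$.)

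For computable compactness, on input $k$ enumerate the nodes of $T$ at level $k$, namely the finite geodesic words $w$ of length $k$ readable in $W^{\mathrm{tr}}$. This set is finite and decidable. For each such $w$, compute an eventually periodic extension $wyv^\infty\in P$ exactly as in the density argument: from the state reached after $w$, search the finite graph of $W^{\mathrm{tr}}$ for a path $y$ to a state lying on a cycle $v$. This search terminates because some cycle is reachable from every surviving state. Every $\alpha\in P$ has $\alpha{\restriction}k=w$ for some such $w$, and then $d_P(\alpha,wyv^\infty)\le 2^{-k-1}<2^{-k}$. Outputting the indices of these finitely many special points (located by searching the enumeration) therefore gives the required cover. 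Compactness of $P$ itself follows from König's lemma, since $T$ is finitely branching.

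The only step with real content is the effectiveness of constructing $W^{\mathrm{tr}}$, in particular the effectiveness of Cannon's cone types, which the paper already asserts. Everything after that is routine automaton bookkeeping. The one point needing care is the exact comparison of eventually periodic words, which the length bound above handles.
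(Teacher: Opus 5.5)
Your proposal is correct and follows the paper's proof: for each level-$k$ node of the tree of geodesic prefixes, you find an eventually periodic extension by reaching a cycle in $W^{\mathrm{tr}}$, and these extensions are the centers of a finite $2^{-k}$-cover. You go further than the paper in two places. You prove computability of the metric on eventually periodic words, using the agreement bound $\max(|u|,|u'|)+|v|\cdot|v'|$, which the paper dismisses as immediate. You also index the cover by words rather than by vertices of $P_n$, which is the correct reading of the paper's somewhat ambiguous notation.
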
 

\begin{proof}
That the presentation is computable is immediate from the definition.  To verify computable compactness, fix $n$ and for each $x\in P_n$, search for geodesic words extending $x$ that eventually reach the same state twice and use the same argument as above to compute an eventually periodic word $\alpha_x$ extending $x$.  Then for any infinite word $\alpha$, setting $x(\alpha):=\alpha(1)\cdots\alpha(n)\in P_n$, we have $d(\alpha,\alpha_{x(\alpha)})\leq 2^{-n}$.  It follows that the open balls with centers $\alpha_x$, for $x\in P_n$, and radius $2^{-n}$, form a finite open cover of $P$. 
\end{proof}

Two geodesic rays in $\Gamma$ are said to be \emph{asymptotic} if their images have finite Hausdorff distance.  We let $\sim$ denote the equivalence relation on $P$ given by $\alpha\sim \beta$ if and only if they are asymptotic.  This is clearly a closed equivalence relation on $P$.  In hyperbolic groups, there is a very useful reformulation of this equivalence relation defined in terms of so-called \emph{fellow-traveling}:

\begin{fact}\label{lem:ft}
For $\alpha,\beta\in P$, we have that $\alpha\sim \beta$ if and only if $d(\alpha(t),\beta(t))\le 2\delta$ for all $t\ge 0$.
\end{fact}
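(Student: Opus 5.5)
The backward direction is immediate. If $d(\alpha(t),\beta(t))\le 2\delta$ for all $t\ge 0$, then every point of $\alpha$ is within $2\delta$ of a point of $\beta$ and vice versa. So the Hausdorff distance between the images is at most $2\delta$, and $\alpha\sim\beta$. All the work is in the forward direction. The plan is to use $\delta$-thinness of a single well-chosen geodesic triangle with a very long base, together with the fact that both rays start at $e$, so that distances from $e$ pin down the parameters.

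Suppose the images of $\alpha$ and $\beta$ have Hausdorff distance at most some $K<\infty$, and fix $t\ge 0$.

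\emph{Building the triangle.} Choose $T$ large, say $T>t+K+\delta$. Pick $s$ with $d(\alpha(T),\beta(s))\le K$. Since $d(e,\alpha(T))=T$ and $d(e,\beta(s))=s$, the triangle inequality gives $|T-s|\le K$. Let $\gamma$ be a geodesic from $\alpha(T)$ to $\beta(s)$, of length at most $K$. This gives the geodesic triangle with sides $\alpha|_{[0,T]}$, $\beta|_{[0,s]}$ and $\gamma$.

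\emph{Ruling out the short side.} By $\delta$-thinness, $\alpha(t)$ lies within $\delta$ of $\beta|_{[0,s]}\cup\gamma$. Every point $p$ of $\gamma$ satisfies $d(e,p)\ge T-K>t+\delta$. On the other hand, any point within $\delta$ of $\alpha(t)$ has distance at most $t+\delta$ from $e$. So $\alpha(t)$ cannot be $\delta$-close to $\gamma$, and hence there is some $u\in[0,s]$ with $d(\alpha(t),\beta(u))\le\delta$.

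\emph{Matching parameters.} Comparing distances from $e$, we get $|u-t|=|d(e,\beta(u))-d(e,\alpha(t))|\le\delta$. Therefore
\[
d(\alpha(t),\beta(t))\le d(\alpha(t),\beta(u))+d(\beta(u),\beta(t))\le \delta+|u-t|\le 2\delta .
\]

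The only delicate point is the choice of $T$: it must be large enough that the connecting geodesic $\gamma$ sits too far from $e$ to be $\delta$-close to $\alpha(t)$. The estimate $|T-s|\le K$ is what makes such a choice possible. Everything else is the triangle inequality. The argument works for real $t$ in the Cayley graph viewed as a geodesic metric space, and in particular at integer times, which is what is used for $P$.
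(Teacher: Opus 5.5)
Your proof is correct; the paper states this as a Fact without proof, but your argument is the same thin-triangle reasoning the paper runs inside the proof of Lemma~\ref{lem:shrink}. There, too, one takes a long triangle based at $e$, excludes its short third side by comparing distances from $e$, and then matches parameters via $|u-t|\le\delta$, with your Hausdorff bound $K$ playing the role of $D$. The only cosmetic point is the choice of $s$ with $d(\alpha(T),\beta(s))\le K$: it needs either $K$ strictly larger than the Hausdorff distance, or the observation that the infimum is attained because $\Gamma$ is proper and the image of $\beta$ is closed.
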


An important consequence of fellow-traveling is the following:

\begin{lemma}
$\sim$ is an effectively closed equivalence relation on $P$.
\end{lemma}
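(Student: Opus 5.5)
By Fact \ref{lem:ft}, the complement of $\sim$ in $P\times P$ is
\[
\{(\alpha,\beta)\in P\times P : \exists t\in\N\ \ d(\alpha(t),\beta(t))>2\delta\}.
\]
Here it suffices to quantify over integer times $t$. The rays are paths in the Cayley graph, so at a non-integer time each of $\alpha,\beta$ lies within $\tfrac12$ of its value at an adjacent integer time. A violation of the bound $2\delta$ at a non-integer time therefore gives at least a violation of $2\delta+1$ at an integer time. To avoid fussing over the exact constant, it is safer to use the equivalent characterization
\[
\alpha\sim\beta \iff d(\alpha(n),\beta(n))\le 2\delta \text{ for all } n\in\N .
\]
The forward direction is immediate from Fact \ref{lem:ft}. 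For the reverse, if the vertex distances are bounded then the Hausdorff distance between the images is finite, so the rays are asymptotic by definition.

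The plan is to show that this complement is a c.e.\ union of basic open boxes. For each $n$ and each pair of geodesic words $x,y$ of length $n$ that survive in $W^{\mathrm{tr}}$ (so $x,y\in P_n$), let $[x]$ denote the cylinder of rays extending $x$. This cylinder is exactly the open ball in $P$ of radius $2^{-n}$ (or a slightly adjusted rational radius) around any special point extending $x$, for instance the eventually periodic ray $\alpha_x$ built in the proof of Proposition \ref{Piscompcomp}. The enumeration is as follows: for every $n$, list all pairs $(x,y)\in P_n\times P_n$, and output the box $[x]\times[y]$ whenever $d(x,y)>2\delta$.

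Deciding whether $d(x,y)>2\delta$ is decidable: $d(x,y)=|x^{-1}y|$, and the length of a word is computable via the solvable word problem (the Dehn algorithm) together with the computable labelled balls $B_m$. One checks whether $x^{-1}y$ equals any word of length at most $2\delta$. The set $P_n$ is also decidable, as noted earlier. So this set of boxes is c.e., uniformly in $\delta$ and $\langle S|R\rangle$.

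It remains to check that the union of these boxes is exactly the complement of $\sim$. If $(\alpha,\beta)$ is not in $\sim$, pick $n$ with $d(\alpha(n),\beta(n))>2\delta$; then $(\alpha,\beta)$ lies in $[\alpha|_n]\times[\beta|_n]$, and this box is enumerated. Conversely, every pair in an enumerated box violates the bound at time $n$, so it is not in $\sim$.

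Finally, the cylinders must be expressed as basic opens of the computable topological space of Example \ref{msexample}. The cylinder $[x]$ equals the ball $B(\alpha_x,r)$ for any rational $r$ with $2^{-n-1}<r\le 2^{-n}$, given the convention for $d_P$. The index of $\alpha_x$ among the special points is computable by searching the effective enumeration. The main obstacle is only this bookkeeping about integer versus real times and the exact ball radii; the substance is entirely in Fact \ref{lem:ft}.
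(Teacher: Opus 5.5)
Your proposal is correct and is essentially the paper's proof, which writes the complement of $\sim$ as $\bigcup_{t\in\N}\{(\alpha,\beta)\in P\times P : d(\alpha(t),\beta(t))>2\delta\}$ via Fact~\ref{lem:ft}; you additionally make explicit what the paper leaves implicit, namely that each piece is a decidable finite union of cylinder boxes $[x]\times[y]$ (with $x,y$ geodesic words of length $t$), and that each cylinder is a basic ball of $P$. Your aside claiming that a violation at a non-integer time yields a violation of size $2\delta+1$ at an integer time has the inequality backwards (one only gets distance $>2\delta-1$), but you rightly discard it in favor of the integer-time characterization, which is all the argument needs.
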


\begin{proof}
Given Fact \ref{lem:ft}, the lemma follows from the equality
$$\{(\alpha,\beta)\in P\times P \ : \ \alpha\not\sim\beta\}=\bigcup_{t\in \N}\{(\alpha,\beta)\in P\times P \ : \ d(\alpha(t),\beta(t))>2\delta\}.\qedhere$$
\end{proof}

We set $\bG:=P/\sim$, topologized with the quotient topology, and let $\pi:P\to\bG$ be the canonical quotient map.  We refer to $\bG$ as the \emph{boundary of $G$}.

From Lemma \ref{lem:quotient}, we immediately obtain:

\begin{proposition}
Equip $\bG$ with the computable topology it obtains from $P$ and $\sim$ as in Lemma \ref{lem:quotient}.  Then $\bG$ is computably compact and computably overt and the quotient map $\pi:P\to \bG$ is computable.
\end{proposition}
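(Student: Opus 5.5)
The statement is an immediate consequence of Lemma \ref{lem:quotient}, applied with $X:=P$ carrying its standard presentation and with $\sim$ the asymptotic relation. The plan is to check the hypotheses of that lemma one at a time and then read off its conclusions.

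The first hypothesis is that $P$ is computably compact. This is exactly Proposition \ref{Piscompcomp}. Here $P$ is regarded as a computable topological space through Example \ref{msexample}, and the observation following the definitions says that computable compactness qua metric space agrees with computable compactness qua topological space.

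The second hypothesis is that $\sim$ is an effectively closed equivalence relation. This is the preceding lemma, which rests on Fact \ref{lem:ft}. It is worth making the effectivity explicit. For each $t\in\N$, the set $\{(\alpha,\beta):d(\alpha(t),\beta(t))>2\delta\}$ depends only on the length-$t$ prefixes of $\alpha$ and $\beta$. It is therefore a finite union of products of prefix balls of radius $2^{-t}$. Each such ball can be named as a basic ball centered at a special point, namely the eventually periodic extension $\alpha_x$ built in the proof of Proposition \ref{Piscompcomp}. Whether $d(x,y)>2\delta$ holds for $x,y\in P_t$ is decidable, because we can compute balls in the Cayley graph from the Dehn presentation. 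So the union over $t$ is uniformly c.e.

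The third hypothesis is that $\bG=P/\sim$ is $T_0$. I expect this to be the only step needing a short argument, and in fact we get the stronger Hausdorff property. The approach is to show that the equivalence classes $R[\alpha]$ are closed and that $\sim$ is a closed relation on the compact metrizable space $P$. The standard fact that a compact Hausdorff space modulo a closed equivalence relation is Hausdorff then gives the claim. Concretely, take $\pi(\alpha)\neq\pi(\beta)$. The saturations $R[\{\alpha\}]$ and $R[\{\beta\}]$ are disjoint closed sets, and they are closed because they are projections of the closed set $\sim$ intersected with $\{\alpha\}\times P$, a compact set. By normality of $P$ together with compactness, these saturations have disjoint saturated open neighborhoods; one obtains these by taking $U\setminus R[P\setminus U]$. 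Alternatively, $T_0$ can be obtained directly. If $\alpha\not\sim\beta$, then some $t$ has $d(\alpha(t),\beta(t))>2\delta$. By Fact \ref{lem:ft}, the open set $P\setminus R[\{\beta\}]$ is saturated and contains $\alpha$ but not $\beta$.

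With these three hypotheses in hand, Lemma \ref{lem:quotient}(1)--(3) yields the computable topology on $\bG$, computability of $\pi$, and computable compactness of $\bG$. For computable overtness, part (4) reduces the claim to showing that $P$ is computably overt. This holds because $P$ is a computable metric space, and the remark after the definitions says every computable metric space is computably overt. One can also see it directly: every basic ball is centered at a special point, so every basic ball is nonempty. This completes the plan.
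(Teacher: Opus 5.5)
Your proposal is correct and follows the paper's own route: the paper simply invokes Lemma \ref{lem:quotient}, using Proposition \ref{Piscompcomp}, the effective closedness of $\sim$, and the computable overtness of the metric space $P$. Your explicit check that $\bG$ is $T_0$ (indeed Hausdorff, via closedness of $\sim$ on the compact space $P$) supplies a hypothesis of Lemma \ref{lem:quotient} that the paper leaves implicit here and only recovers later in Proposition \ref{prop:hausdorff}.
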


In order to show that $\bG$ is computably Hausdorff, and thus that $\bG$ is effectively metrizable, it will behoove us to make explicit the numbered basis of $\bG$.

For $\xi\in\bG$, set
\[
T_n(\xi) := \{\alpha(n) : \alpha\in P \text{ and }\pi(\alpha)=\xi\}\ \subseteq P_n,
\]
a nonempty subset of a ball of radius $2\delta$ by Fact~\ref{lem:ft}. For $n\in\N$ and $F\subseteq P_n$, define
$$
B_{n,F} := \{\xi\in\bG : T_n(\xi)\subseteq F\},$$
indexed by the decidable set of pairs $(n,F)$.

\begin{lemma}\label{lem:basis}
The following facts hold:
\begin{enumerate}
    \item $B_{n,F}=(U_F)^\ast$, where $U_F:=\{\alpha\in P : \alpha(n)\in F\}$.
    \item Each $B_{n,F}$ is open.
    \item The family $(B_{n,F})$ is cofinal among the sets $C_A$ of Lemma~\ref{lem:quotient}, and is therefore a basis for $\bG$.  
    \item If $\alpha\in P$ and $\pi(\alpha)=\xi$, then setting $F:=B(\alpha(n),2\delta)\cap P_n$, we have $\xi\in B_{n,F}$.
\end{enumerate}
\end{lemma}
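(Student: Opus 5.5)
We prove the four items in order; (1) and (4) are unwinding definitions, (2) follows from (1) and Lemma~\ref{lem:quotient}, and (3) is the real content.

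For (1), recall $U^\ast=\{\pi(\alpha): R[\alpha]\subseteq U\}$. Now $\xi\in (U_F)^\ast$ if and only if every $\beta\in P$ with $\pi(\beta)=\xi$ satisfies $\beta(n)\in F$. This is exactly $T_n(\xi)\subseteq F$, so $B_{n,F}=(U_F)^\ast$.

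For (2), note that $U_F=\bigcup_{x\in F}\{\alpha:\alpha(n)=x\}$. This is a finite union of basic balls of radius $2^{-n}$ (or slightly larger) in the prefix metric on $P$, so $U_F=B_A$ for a finite $A$ computable from $(n,F)$. Hence $B_{n,F}=C_A$ is one of the basic sets of Lemma~\ref{lem:quotient}, which showed each $C_A$ is open. Equivalently, $\pi^{-1}(B_{n,F})=P\setminus R[P\setminus U_F]$, and $R[\cdot]$ of a closed set is closed because $P$ is compact and $\sim$ is closed.

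For (4), suppose $\pi(\beta)=\xi=\pi(\alpha)$. Fact~\ref{lem:ft} gives $d(\alpha(n),\beta(n))\le 2\delta$, and $\beta(n)\in P_n$ since $\beta$ is a ray from $e$. So $\beta(n)\in F$, hence $T_n(\xi)\subseteq F$.

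For (3), which is the main step, take $\xi\in C_A$, so $R[\alpha]\subseteq B_A$ for any $\alpha$ with $\pi(\alpha)=\xi$. We need $n$ and $F$ with $\xi\in B_{n,F}\subseteq C_A$.
\begin{itemize}
\item The fibre $R[\alpha]$ is compact and $B_A$ is open in the prefix metric, so by a Lebesgue-number argument there is $n$ such that every $\gamma\in P$ agreeing with some fibre element up to time $n$ lies in $B_A$.
\item Set $F:=T_n(\xi)$. Then $\xi\in B_{n,F}$ trivially.
\item Let $\eta\in B_{n,F}$ and $\gamma\in\pi^{-1}(\eta)$. Then $\gamma(n)\in T_n(\xi)$, so $\gamma(n)=\beta(n)$ for some $\beta\sim\alpha$.
\end{itemize}
The obstacle is that $\gamma$ need only agree with $\beta$ at time $n$, not on the whole prefix, so $\gamma$ need not be prefix-close to $\beta$.

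There are two ways to fix this. The first is to enlarge $n$ until sets determined by the vertex at time $n$ sit inside $B_A$; I would try this first. For each $x\in P_n$, the set $\{\gamma:\gamma(n)=x\}$ is a finite union of cylinders given by the geodesic prefixes ending at $x$. Compactness should give $m$ such that every ray $\gamma$ with $\gamma(m)\in T_m(\xi)$ has $\gamma$ within $2^{-n}$ of some fibre element. This amounts to showing that $\bigcap_m \{\gamma:\gamma(m)\in T_m(\xi)\}$ is contained in $R[\alpha]$. That holds because such a $\gamma$ stays within $2\delta$ of fibre rays at arbitrarily large times, hence at all times by geodesicity, so $\gamma\sim\alpha$.

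The second fix is to note that every $B_A$ is itself a union of sets $U_F$ for large $n$, because prefix-cylinders of length $n$ refine into vertex conditions. However, this is false in general, since the vertex at time $n$ does not determine the prefix. So I expect the compactness and nested-intersection argument of the first fix to be the route to take.

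Since the $C_A$ form a basis and every $C_A$ containing $\xi$ contains some $B_{n,F}\ni\xi$, the family $(B_{n,F})$ is also a basis.
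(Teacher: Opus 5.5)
Your treatment of (1), (2) and (4) matches the paper's. For (3), the paper simply asserts that the $U_F$ are ``cofinal'' among finite unions of basic sets. You correctly see why this is not immediate (a vertex condition at time $n$ is not a cylinder condition), and your compactness route is the right one. However, there is a gap in your reduction. Set $K_m:=\{\gamma\in P:\gamma(m)\in T_m(\xi)\}=U_{T_m(\xi)}$. Your step ``this amounts to showing that $\bigcap_m K_m\subseteq\pi^{-1}(\xi)$'' is only valid if the $K_m$ decrease in $m$, and you never check this. Without it, compactness of $P$ only yields finitely many $m_1,\dots,m_j$ with $K_{m_1}\cap\cdots\cap K_{m_j}\subseteq B_A$. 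That shows finite intersections $B_{m_1,T_{m_1}(\xi)}\cap\cdots\cap B_{m_j,T_{m_j}(\xi)}$ form a basis, not that single sets $B_{n,F}$ do.

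The missing fact is true, by splicing geodesics. Suppose $\gamma\in P$ and $\gamma(m)=\beta(m)$ with $\beta\sim\alpha$. Let $\beta'$ agree with $\gamma$ on $[0,m]$ and with $\beta$ on $[m,\infty)$. Then $|\beta'(t)|=t$ for all $t$, so $\beta'$ is a geodesic ray from $e$, i.e.\ $\beta'\in P$, and $\beta'\sim\beta\sim\alpha$. Hence $\gamma(t)=\beta'(t)\in T_t(\xi)$ for all $t\le m$, that is, $K_m\subseteq K_t$. So the $K_m$ form a decreasing sequence of clopen subsets of $P$ with intersection $\pi^{-1}(\xi)$. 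Compactness then gives $K_m\subseteq B_A$ for some $m$ directly (your Lebesgue-number step is unnecessary), and so $\xi\in B_{m,T_m(\xi)}=(K_m)^\ast\subseteq C_A$.

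Your justification of $\bigcap_m K_m\subseteq\pi^{-1}(\xi)$ is also off. For $\gamma$ in the intersection, Fact~\ref{lem:ft} gives $d(\gamma(m),\alpha(m))\le 2\delta$ at \emph{every} $m$, so $\gamma\sim\alpha$ immediately. Closeness at merely arbitrarily large times does not propagate to all times ``by geodesicity''; that needs thin triangles. That thin-triangle argument is exactly the proof of Lemma~\ref{lem:shrink}. Applied with $F_n=T_n(\xi)$ and $D=4\delta$, it gives an alternative proof of (3) that avoids nestedness altogether, and there is no circularity since it uses only part (2).
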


\begin{proof}
(1) follows from the fact that $$(U_F)^\ast=\{\xi\in \bG : \pi^{-1}(\xi)\subseteq U_F\}=\{\xi\in \bG : T_n(\xi)\subseteq F\}=B_{n,F}.$$  (2) follows from (1) and Lemma~\ref{lem:quotient}.  It is clear that the $U_F$ are cofinal among finite unions of basic sets of $P$, and thus the $B_{n,F}$ are cofinal among the sets $C_A$; (3) now follows again from Lemma~\ref{lem:quotient}.  For (4): $T_n(\xi)\subseteq B(\alpha(n),2\delta)$ by Fact~\ref{lem:ft}, whence $\xi\in B_{n,F}$. 
\end{proof}

We are now ready to prove that $\bG$ is computably Hausdorff:

\begin{proposition}\label{prop:hausdorff}
Let $W$ be the decidable set of pairs of pairs $((n,F),(n,F'))$, with $F,F'\subseteq P_n$ and $F\cap F'=\emptyset$.  Then 
$$\{(\xi,\eta)\in \bG\times \bG \ : \ \xi\not=\eta\}=\bigcup_{((n,F),(n,F'))\in W} B_{n,F}\times B_{n,F'}.$$
Consequently, $\bG$ is computably Hausdorff.
\end{proposition}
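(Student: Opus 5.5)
Both inclusions of the displayed equality need checking, and then the ``consequently'' follows from Lemma~\ref{lem:basis}(3). For the easy inclusion, suppose $(\xi,\eta)\in B_{n,F}\times B_{n,F'}$ with $F\cap F'=\emptyset$. Then $T_n(\xi)\subseteq F$ and $T_n(\eta)\subseteq F'$. Since $T_n(\xi)$ is nonempty (as $\pi$ is surjective), $T_n(\xi)$ and $T_n(\eta)$ are disjoint nonempty sets. If $\xi=\eta$ they would coincide, a contradiction, so $\xi\ne\eta$.

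For the hard inclusion, suppose $\xi\ne\eta$ and choose $\alpha,\beta\in P$ with $\pi(\alpha)=\xi$ and $\pi(\beta)=\eta$. Since $\alpha\not\sim\beta$, Fact~\ref{lem:ft} gives some $t$ with $d(\alpha(t),\beta(t))>2\delta$. The plan is to push to a larger level $n$ where $d(\alpha(n),\beta(n))>4\delta$, so that the neighbourhoods from Lemma~\ref{lem:basis}(4) separate. Concretely, set $F:=B(\alpha(n),2\delta)\cap P_n$ and $F':=B(\beta(n),2\delta)\cap P_n$. Lemma~\ref{lem:basis}(4) gives $\xi\in B_{n,F}$ and $\eta\in B_{n,F'}$. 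Disjointness of $F$ and $F'$ follows from the triangle inequality once $d(\alpha(n),\beta(n))>4\delta$.

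The main obstacle is therefore showing that non-asymptotic rays from $e$ eventually diverge beyond $4\delta$, and in fact that $d(\alpha(n),\beta(n))\to\infty$. I would argue by contradiction. Suppose $d(\alpha(n),\beta(n))\le 4\delta$ for infinitely many $n$. Then $\alpha$ and $\beta$ are at bounded distance along an unbounded sequence of times. For any $t$, pick such an $n\ge t$ and consider the geodesic triangle with vertices $e$, $\alpha(n)$, $\beta(n)$, whose third side has length at most $4\delta$. By $\delta$-thinness, $\alpha(t)$ lies within $\delta$ of $\beta([0,n])$ or of the short side. In either case $\alpha(t)$ is within $5\delta$ of some $\beta(s)$, and comparing distances to $e$ gives $|s-t|\le 5\delta$. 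Hence $d(\alpha(t),\beta(t))\le 10\delta$ for all $t$. The images of $\alpha$ and $\beta$ are then at finite Hausdorff distance, so $\alpha\sim\beta$, a contradiction. Thus some level $n$ satisfies $d(\alpha(n),\beta(n))>4\delta$, and $(\xi,\eta)$ lies in the union.

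Finally, $W$ is decidable because $P_n$ is decidable and $F,F'$ range over finite subsets of $P_n$. Each $B_{n,F}$ is a basic open set of $\bG$ in the numbered basis of Lemma~\ref{lem:quotient}: it equals $C_A$ where $A$ indexes the finitely many cylinder sets making up $U_F$, and this index is computable from $(n,F)$. So the complement of the diagonal is a c.e.\ union of basic boxes, which means $\bG$ is computably Hausdorff.
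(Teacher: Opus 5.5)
Your proof is correct and follows essentially the same route as the paper: the same sets $F=B(\alpha(n),2\delta)\cap P_n$ and $F'=B(\beta(n),2\delta)\cap P_n$, Lemma~\ref{lem:basis}(4), and the triangle inequality. The thin-triangle argument you single out as the main obstacle is valid but unnecessary: if $d(\alpha(n),\beta(n))\le 4\delta$ held for every $n$, the rays would be at finite Hausdorff distance, so $\alpha\sim\beta$ by the very definition of $\sim$, and this is why the paper simply picks such an $n$ without further comment.
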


\begin{proof}
The inclusion ``$\supseteq$'' is obvious.  We now prove the inclusion ``$\subseteq$.''  Towards that end, consider $\xi\ne\eta$ and fix $\alpha,\beta\in P$ with $\pi(\alpha)=\xi$ and $\pi(\beta)=\eta$. Since $\alpha\not\sim \beta$, we may pick $n$ with $d(\alpha(n),\beta(n))>4\delta$ and set $F:=B(\alpha(n),2\delta)\cap P_n$, $F':=B(\beta(n),2\delta)\cap P_n$. Then $\xi\in B_{n,F}$ and $\eta\in B_{n,F'}$ by Lemma~\ref{lem:basis}, and $F\cap F'=\emptyset$ by the triangle inequality.
\end{proof}

Combining everything together, we have:

\begin{theorem}
There is a computably compact presentation $\bG^\#$ of $\bG$ as a metric space so that the canonical quotient map $\pi:P\to \bG$ is a computable map. 
\end{theorem}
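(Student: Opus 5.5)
This theorem follows by assembling results already established; no new geometric input is needed. We will apply Corollary~\ref{cor:quotmetrize} with $X:=P$ and the equivalence relation $\sim$ of asymptoticity. To do so we verify its four hypotheses in turn.

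First, $P$ with its standard presentation is a computably compact metric space by Proposition~\ref{Piscompcomp}. By the observations following the definition of computable compactness, viewing it as a computable topological space via Example~\ref{msexample} gives a computably compact space, and it is automatically computably overt. Second, $\sim$ is an effectively closed equivalence relation on $P$ by the lemma derived from Fact~\ref{lem:ft}. Third, the quotient $\bG=P/\sim$ must be $T_0$ for Lemma~\ref{lem:quotient} to apply. This follows from Proposition~\ref{prop:hausdorff}, which shows that distinct points of $\bG$ are separated by disjoint basic open sets $B_{n,F}$ and $B_{n,F'}$; in particular $\bG$ is Hausdorff.

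The remaining and most delicate step is checking that the computable Hausdorffness witnessed in Proposition~\ref{prop:hausdorff} is expressed in the numbered basis $(C_A)$ of Lemma~\ref{lem:quotient}, rather than only in the family $(B_{n,F})$. For this I will use Lemma~\ref{lem:basis}(1): it gives $B_{n,F}=(U_F)^\ast$. Moreover, $U_F$ is a finite union of basic balls of $P$, namely the balls of radius $2^{-n}$ around the eventually periodic rays $\alpha_x$ for $x\in F$, as constructed in the proof of Proposition~\ref{Piscompcomp}. Hence each $B_{n,F}$ equals some $C_A$, where $A$ is computable from $(n,F)$. The decidable set $W$ of Proposition~\ref{prop:hausdorff} therefore translates effectively into a c.e.\ set of pairs of indices $(A,A')$ witnessing that the complement of the diagonal is effectively open in the $(C_A)$-topology. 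So $\bG$ is computably Hausdorff in the sense required.

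With all hypotheses verified, Corollary~\ref{cor:quotmetrize} gives a computable metric space presentation $\bG^\#$ computably equivalent to the quotient computable topology. By Theorem~\ref{thm:metrize} and the remark following it, this presentation is computably compact. The same corollary also shows that $\pi:P\to\bG$ is computable, with respect to the quotient topology and hence, by computable equivalence, with respect to $\bG^\#$. The only real care in the whole argument is the index bookkeeping in the previous paragraph: one has to check that each $U_F$ really is a finite union of basic balls whose index set is computable from $(n,F)$.
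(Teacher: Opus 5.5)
Your proposal is correct and follows the paper's route: the paper obtains this theorem by combining Corollary~\ref{cor:quotmetrize} with Proposition~\ref{Piscompcomp}, the effective closedness of $\sim$, Lemma~\ref{lem:basis} and Proposition~\ref{prop:hausdorff}, and your explicit translation of the Hausdorff witness into $C_A$-indices via $B_{n,F}=(U_F)^\ast$ fills in a step the paper leaves implicit. One small fix to that bookkeeping: several geodesic words of length $n$ may end at the same $x\in F$, so $U_F$ is the union of the radius-$2^{-n}$ balls around one eventually periodic ray for each extendable geodesic prefix of length $n$ ending in $F$, not one ray $\alpha_x$ per vertex $x$; the resulting index set is still computable from $(n,F)$.
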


\begin{remark}
It can be shown that the special points of the computable presentation of $\bG^\#$ can be taken to be the images under $\pi$ of the eventually periodic elements of $P$.  Indeed, these points are uniformly computable and dense since these same assertions hold of the eventually periodic points in $P$.
\end{remark}

It remains to show that the action of $G$ on $\bG$ is computable.  (Note that the computability of the action of $G$ on $\bG$ does not depend on whether or not we are thinking of $\bG$ as a computable topological space of a computable metric space.)  To accomplish that, it will first behoove us to recall the following well-known fact:

\begin{fact}\label{basepointfact}
Suppose that $\gamma$ is a geodesic ray in $\Gamma$ starting at $g$.  Then there is $\alpha\in P$ in $\Gamma$ starting at $e$ such that the Hausdorff distance of $\alpha$ and $\gamma$ is at most $|g|+2\delta$. 
\end{fact}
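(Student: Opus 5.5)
The plan is to take a geodesic $\gamma$ from $g$ to a point far out on $\gamma$, replace it by a geodesic from $e$ to that same far point, and extract a geodesic ray from $e$ as a subsequential limit of these finite geodesics (Arzel\`a--Ascoli, using that $\Gamma$ is proper). Throughout I will use only the definition of $\delta$-thin triangles and the fact that $\Gamma$ is a proper geodesic space.

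Step 1 (finite version). For each $t\ge 0$, set $p_t:=\gamma(t)$ and choose a geodesic segment $\sigma_t$ from $e$ to $p_t$. Also let $\rho$ be a geodesic from $e$ to $g$. The segments $\rho$, $\gamma|_{[0,t]}$ and $\sigma_t$ form a geodesic triangle. By $\delta$-thinness, every point of $\gamma|_{[0,t]}$ lies within $\delta$ of $\rho\cup\sigma_t$. A point near $\rho$ lies within $|g|+\delta$ of $g$, and hence within $|g|+\delta$ of a point of $\sigma_t$ as well: either directly, or via $g$ itself, which lies on $\gamma$ and is within $\delta$ of $\rho\cup\sigma_t$. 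Symmetrically, every point of $\sigma_t$ lies within $\delta$ of $\gamma|_{[0,t]}$ or of $\rho$, and points of $\rho$ are within $|g|$ of $g=\gamma(0)$. In both directions this gives a Hausdorff distance bound of the form $|g|+\delta$ (and certainly at most $|g|+2\delta$) between $\sigma_t$ and $\gamma|_{[0,t]}$. I will check the constants carefully, but there is slack, since the target is $|g|+2\delta$.

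Step 2 (limit). Parametrize each $\sigma_t$ by arc length from $e$. Since $\Gamma$ is proper and locally finite, and the vertices of $\sigma_t$ at integer times form a finite set of choices at each level, a diagonal argument produces a sequence $t_k\to\infty$ along which $\sigma_{t_k}$ converges pointwise, and eventually stabilizes on every bounded initial segment. The limit $\alpha$ is then a geodesic ray from $e$. By choosing each $\sigma_t$ to be a geodesic edge path through vertices, $\alpha$ is a geodesic word, that is, an element of $P$.

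Step 3 (passing the bound to the limit). Fix a point $\alpha(s)$. For large $k$ we have $\alpha(s)=\sigma_{t_k}(s)$, so Step 1 gives a point of $\gamma$ within $|g|+2\delta$ of $\alpha(s)$. Conversely, fix $\gamma(u)$. The point $\sigma_{t_k}(s_k)$ within $|g|+2\delta$ of it satisfies $s_k\le u+|g|+2\delta+|g|$ by the triangle inequality, so $s_k$ ranges over a bounded set. By stabilization of $\sigma_{t_k}$ on bounded segments, $\sigma_{t_k}(s_k)=\alpha(s_k)$ for large $k$. Hence $\gamma$ lies in the $(|g|+2\delta)$-neighborhood of $\alpha$, and the Hausdorff distance is at most $|g|+2\delta$.

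The main obstacle is Step 3, specifically the direction from $\gamma$ to $\alpha$. The nearby points must be taken in a uniformly bounded initial portion of the $\sigma_{t_k}$ so that they survive the limit. This is handled by the length estimate $s_k\le u+2|g|+2\delta$ above. The constant bookkeeping in Step 1 is routine by comparison.
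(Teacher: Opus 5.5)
Your proposal is correct and follows the same route as the paper's sketch: geodesics from $e$ to $\gamma(n)$, a K\"onig/diagonal limit, and a thin-triangle estimate on the triangle with vertices $e$, $g$, $\gamma(n)$. Your Step 3 supplies the limit passage the paper leaves implicit. One small repair in Step 1 is needed. A point $\gamma(u)$ within $\delta$ of some $r$ on $\rho$ should be routed through $e$, which lies on $\sigma_t$: this gives $d(\gamma(u),e)\le\delta+|g|$, whereas going via $g$ only gives about $2|g|+\delta$. Also restrict to integer $t$, so that $\gamma(t)$ is a vertex and $\sigma_t$ can be taken to be an edge path.
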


\begin{proof}[Proof sketch]
For each $n$, choose a geodesic word $w_n$ from $e$ to $\gamma(n)$.  By a K\"onig's lemma argument, we may choose a sequence $(u_k)_{k\in \mathbb{N}}$ such that each $u_k$ is a prefix of $w_n$ for infinitely many $n$, each $u_k$ is a prefix of $u_{k+1}$, and the lengths of $u_k$ tend to $\infty$.  Let $\alpha\in P$ denote the geodesic ray given by the limit of this sequence.  It is a calculation using thinness to show the claimed upper bound on the Hausdorff distance between $\alpha$ and $\gamma$.     
\end{proof}

Now suppose that $\xi\in \bG$ and consider $\alpha\in P$ with $\pi(\alpha)=\xi$.  Then the map $t\mapsto g\alpha(t)$ is also a geodesic ray in $\Gamma$, whhich we call $g\alpha$, which starts at $g$.  Using the above fact, one may find $\alpha'\in P$ within finite Hausdorff distance of the ray $g\alpha$.  One can show that replacing $\alpha$ and/or $\alpha'$ with equivalent geodesic rays leads to equivalent geodesic rays, whence we may define $g\xi:=\pi(\alpha')$ for any $\alpha'$ as above.

The following two results are probably well-known, but we include proofs for completeness.

\begin{lemma}\label{lem:basepoint}
Let $\xi\in\bG$ and suppose $\alpha\in P$ is such that $\pi(\alpha)=\xi$. Suppose also that $\gamma$ is a geodesic ray starting at $g\in G$ and staying within a finite Hausdorff distance of $\alpha$. Then for every $t>|g|+\delta$, there is $s$ with $|s-t|\le|g|+\delta$ and $d(\gamma(t),\alpha(s))\le 3\delta$.
\end{lemma}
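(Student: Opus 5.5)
The plan is to use the standard ideal-triangle (or rather, "asymptotic quadrilateral") thinness argument. Fix $t>|g|+\delta$ and consider the point $p:=\gamma(t)$. Since $\gamma$ and $\alpha$ are at finite Hausdorff distance, they define the same boundary point, and I would work with a geodesic triangle with vertices $e$, $g$, and a far-away point. Concretely, choose $T\gg t$ and let $q:=\gamma(T)$. Pick the point $\alpha(T')$ closest to $q$; the distance $D:=d(q,\alpha(T'))$ is bounded by the Hausdorff distance, independently of $T$. Form the geodesic triangle with vertices $e$, $g$, $q$, taking as sides a geodesic $[e,g]$ of length $|g|$, the segment $\gamma|_{[0,T]}$ from $g$ to $q$, and a geodesic $[e,q]$. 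By $\delta$-thinness, $p=\gamma(t)$ lies within $\delta$ of $[e,g]\cup[e,q]$.

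\textbf{Step 1: rule out the side $[e,g]$.} If $d(p,x)\le\delta$ for some $x\in[e,g]$, then $d(g,p)\le d(g,x)+\delta\le |g|+\delta$. But $d(g,p)=t>|g|+\delta$, a contradiction. Hence there is $y\in[e,q]$ with $d(p,y)\le\delta$.

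\textbf{Step 2: transfer from $[e,q]$ to $\alpha$.} Consider now the triangle $e$, $q$, $\alpha(T')$ with sides $[e,q]$, $\alpha|_{[0,T']}$, and $[q,\alpha(T')]$ of length $D$. By thinness, $y$ lies within $\delta$ of $\alpha|_{[0,T']}\cup[q,\alpha(T')]$. Since $d(e,y)\le |g|+t+\delta$ is bounded while $d(e,q)\to\infty$ as $T\to\infty$, for $T$ large every point of $[q,\alpha(T')]$ is farther than $\delta$ from $y$. Thus $d(y,\alpha(s))\le\delta$ for some $s$, and therefore $d(p,\alpha(s))\le 2\delta\le3\delta$.

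\textbf{Step 3: estimate $|s-t|$.} This is the main bookkeeping step. We have $s=d(e,\alpha(s))$, and by the triangle inequality
\[
|s-d(e,p)|\le 2\delta.
\]
Also $|d(e,p)-t|=|d(e,p)-d(g,p)|\le|g|$. Together these give only $|s-t|\le |g|+2\delta$, which is off by $\delta$ from the claim.

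The obstacle is tightening this bound, and I expect the $3\delta$ allowance is exactly the slack needed for the trade. I would try first to sharpen Step 2. Since $y\in[e,q]$ satisfies $d(e,y)+d(y,q)=d(e,q)$, and $d(e,y)$ is within $\delta$ of $d(e,p)$, I would instead take $s:=d(e,y)$ and check whether $d(y,\alpha(d(e,y)))$ can be controlled by $2\delta$ using the fellow-travelling of geodesics $[e,q]$ and $[e,\alpha(T')]$ from a common basepoint (the analogue of Fact~\ref{lem:ft}). This choice gives $|s-d(e,p)|\le\delta$ and $d(p,\alpha(s))\le3\delta$, and then $|s-t|\le|g|+\delta$.

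If that fails, the fallback is to move the estimate in Step 1 to the other side. One can write $d(e,p)\ge t-|g|$ and $d(e,p)\le t+|g|$, and use the tree-like tripod structure of the triangle $e,g,q$ to gain $\delta$. The point $p$ lies on the part of $\gamma$ beyond the internal point of the triangle, and there $d(e,p)$ and $d(g,p)$ differ by at most $|g|$ with the $\delta$-error absorbed.
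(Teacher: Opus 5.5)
Your argument is correct once you see that the ``obstacle'' in Step 3 is not real. From Steps 2--3 you have $s_0$ with $d(p,\alpha(s_0))\le 2\delta$ and $|s_0-t|\le|g|+2\delta$. Since $\alpha$ is an isometric embedding, moving $s_0$ toward $t$ by at most $\delta$ gives $s$ with $|s-t|\le|g|+\delta$ and $d(p,\alpha(s))\le 3\delta$.

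Your sharpened Step 2 also works, and the estimate you left as ``check whether'' follows in one line from what you already have. If $d(y,\alpha(s'))\le\delta$, then $|s'-d(e,y)|\le\delta$, so $d(y,\alpha(d(e,y)))\le2\delta$. This is the same reparametrization trick the paper uses in the proof of Lemma~\ref{lem:shrink}. The tripod fallback in your last paragraph is not needed and, as written, is not an argument.

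Your Step 1 is the paper's Step 1: the triangle $e,g,\gamma(n)$, with $[e,g]$ ruled out because $d(\gamma(t),[e,g])\ge t-|g|>\delta$. The two routes differ in how they pass from the geodesics $[e,\gamma(n)]$ to $\alpha$.
\begin{itemize}
\item The paper takes the limiting ray $\alpha'\in P$ built from these geodesics by the K\"onig argument of Fact~\ref{basepointfact}. It gets $d(\gamma(t),\alpha'(s))\le\delta$ with $|s-t|\le|g|+\delta$ directly. It then uses $\alpha'\sim\alpha$ and Fact~\ref{lem:ft} to move to $\alpha(s)$ at the same parameter, at cost $2\delta$. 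That is why its bound on $|s-t|$ is sharp without any trade.
\item You stay at finite scale with a second thin triangle $e,q,\alpha(T')$. You use the finite Hausdorff distance only to bound $D$ uniformly in $T$.
\end{itemize}
Your route is more elementary and self-contained. It needs no compactness or limit construction, and no appeal to Fact~\ref{lem:ft} or Fact~\ref{basepointfact}, which the paper only cites or sketches. The paper's route is shorter given those facts.
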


\begin{proof}
Fix $n\geq t$ and consider the triangle $e,g,\gamma(n)$ with sides a geodesic $[e,g]$, $\gamma|_{[0,n]}$, and a geodesic $w_n$ connecting $e$ and $\gamma(n)$. For $t>|g|+\delta$, we have $d(\gamma(t),[e,g])\ge t-|g|>\delta$, so $\gamma(t)$ is within $\delta$ of $w_n$.  Letting $\alpha'\in P$ be constructed from the sequence $(w_n)_{n\in \N}$ as in the proof of Fact \ref{basepointfact}, we have that $d(\gamma(t),\alpha'(s))\le\delta$ for some $s$ which necessarily satisfies $|s-t|\leq |g|+\delta$.  By Fact~\ref{lem:ft}, $d(\alpha'(s),\alpha(s))\le2\delta$, finishing the proof.
\end{proof}

\begin{lemma}\label{lem:shrink}
Fix $D\ge0$ and $\zeta\in\bG$. Let $N\subseteq\N$ be infinite and suppose that, for each $n\in N$, we are given $F_n\subseteq P_n$ with
\[
T_n(\zeta)\subseteq F_n \qquad\text{and}\qquad \diam F_n\le D .
\]
Then $\{B_{n,F_n}\}_{n\in N}$ is a neighborhood basis at $\zeta$ consisting of open sets.
\end{lemma}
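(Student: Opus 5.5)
Since $T_n(\zeta)\subseteq F_n$, we have $\zeta\in B_{n,F_n}$, and each $B_{n,F_n}$ is open by Lemma~\ref{lem:basis}(2). So the only real content is the neighborhood-basis property: for every open $V\ni\zeta$ there should be some $n\in N$ with $B_{n,F_n}\subseteq V$. We plan to pass to $P$. Set $K:=\pi^{-1}(\zeta)$, a compact subset of $P$. The set $\pi^{-1}(V)$ is an open saturated set containing $K$. Recall that $B_{n,F}=(U_F)^\ast$ by Lemma~\ref{lem:basis}(1). It therefore suffices to find $n\in N$ such that every $\beta\in P$ with $\pi(\beta)\in B_{n,F_n}$ lies in $\pi^{-1}(V)$. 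For such $\beta$, all rays equivalent to $\beta$ pass through $F_n$ at time $n$; in particular $\beta(n)\in F_n$. We will therefore aim for the stronger statement that, for large $n\in N$, $U_{F_n}\subseteq \pi^{-1}(V)$.

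We argue by contradiction and use compactness. Suppose that for infinitely many $n\in N$ there is $\beta_n\in P$ with $\beta_n(n)\in F_n$ but $\pi(\beta_n)\notin V$. Since $P$ is compact, we pass to a subsequence with $\beta_n\to\beta\in P$ in the prefix metric. Because $P\setminus\pi^{-1}(V)$ is closed, $\pi(\beta)\notin V$, and in particular $\beta\not\sim\alpha$ for every $\alpha\in K$. Now fix some $\alpha\in K$. Then $\alpha(n)\in T_n(\zeta)\subseteq F_n$, so $d(\beta_n(n),\alpha(n))\le \diam F_n\le D$ for all $n$ in the subsequence.

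The main obstacle is to turn the bound $d(\beta_n(n),\alpha(n))\le D$ at the single times $n$ into the conclusion $\beta\sim\alpha$. We will use thinness of the geodesic triangle with vertices $e$, $\alpha(n)$, $\beta_n(n)$. Its two long sides are the initial segments of $\alpha$ and $\beta_n$ up to time $n$, and its third side has length at most $D$. A standard thin-triangle (insize) estimate then gives $d(\alpha(t),\beta_n(t))\le 2\delta+D$ (or some comparable constant $C(\delta,D)$) for all $t\le n$. Fix $t$. For large $n$ in the subsequence, $\beta_n$ agrees with $\beta$ up to time $t$, so $d(\alpha(t),\beta(t))\le C(\delta,D)$. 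Since this holds for every $t$, the rays $\alpha$ and $\beta$ stay at bounded distance, so their images have finite Hausdorff distance and $\alpha\sim\beta$. This contradicts the previous paragraph, so $U_{F_n}\subseteq\pi^{-1}(V)$ for all sufficiently large $n\in N$, and hence $B_{n,F_n}\subseteq (\pi^{-1}V)^\ast=V$. The only computation we expect to need care with is the precise constant in the thin-triangle estimate, and any finite constant suffices here.
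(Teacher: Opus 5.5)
Your proposal is correct and follows essentially the same route as the paper. Both arguments go by contradiction, extract a convergent subsequence $\beta_n\to\beta$ in the compact space $P$, use thinness of the triangle $e,\alpha(n),\beta_n(n)$ to force $\beta\sim\alpha$, and contradict closedness of the complement of the given neighborhood. The only differences are that you run the contradiction inside $P$ through $U_{F_n}\subseteq\pi^{-1}(V)$, whereas the paper works in $\bG$ using continuity of $\pi$, and that you get the conclusion for all large $n$ rather than for some $n$.

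The one loose point is your constant. The bound $2\delta+D$ for all $t\le n$ is not justified as stated. Splitting on whether $\beta_n(t)$ is $\delta$-close to $\alpha([0,n])$ or to the short side gives something like $2\delta+2D$ for $t$ near $n$. As you note, any uniform constant suffices. The paper avoids the issue by fixing $t$ and taking $n>t+D+\delta$, which yields the clean bound $2\delta$.
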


\begin{proof}
Each $B_{n,F_n}$ is open and contains $\zeta$ by Lemma~\ref{lem:basis}.  Suppose, towards a contradiction, that the family is not a neighborhood basis, that is, some open $U\ni\zeta$ contains no $B_{n,F_n}$.  As a result, for each $n\in N$, there is $\eta_n\in B_{n,F_n}\setminus U$. For each $n\in N$, choose $\beta_n\in P$ such that $\pi(\beta_n)=\eta_n$.  Choose also $\alpha\in P$ such that $\pi(\alpha)=\zeta$.  Note that $d(\alpha(n),\beta_n(n))\le D$.

Fix $t$ and take $n\in N$ with $n>t+D+\delta$. Consider the geodesic triangle with vertices $e$, $\alpha(n)$, $\beta_n(n)$, two of whose sides are $\alpha|_{[0,n]}$ and $\beta_n|_{[0,n]}$. Every point of the third side has distance at least $n-D$ from $e$, so $d\big(\beta_n(t),[\alpha(n),\beta_n(n)]\big)\ge n-D-t>\delta$. Thinness therefore places $\beta_n(t)$ within $\delta$ of $\alpha([0,n])$, say $d(\beta_n(t),\alpha(s))\le\delta$; then $|t-s|=\big||\beta_n(t)|-|\alpha(s)|\big|\le\delta$ and so $d(\beta_n(t),\alpha(t))\le2\delta$.

By compactness of $P$, we may pass to a subsequence $(\beta_{n_k})_{k\in \N}$ with $\beta_{n_k}\to\beta\in P$ in the prefix metric. For each fixed $t$, we have $\beta_{n_k}(t)=\beta(t)$ for large $k$, and the previous paragraph applies once $n_k>t+D+\delta$, so $d(\beta(t),\alpha(t))\le2\delta$ for every $t$. Hence $\beta\sim\alpha$.  Since $\pi$ is continuous, we have $\eta_{n_k}=\pi(\beta_{n_k})\to\pi(\beta)=\zeta$.  Since $\bG\setminus U$ is closed, we have $\zeta\notin U$, which is a contradiction.
\end{proof}

The final ingredient (and perhaps the least trivial) is the following:

\begin{theorem}\label{inclusiondecidable}
The set of pairs of pairs $$\{((n,F),(n',F')) \ : \  B_{(n,F)}\subseteq B_{(n',F')}\}$$ is decidable.
\end{theorem}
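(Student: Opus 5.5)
The plan is to treat the condition $B_{n,F}\subseteq B_{n',F'}$ as a first-order sentence about an \emph{$\omega$-automatic} structure built on $P$, and then invoke the decidability of first-order theories of $\omega$-automatic structures (Büchi's theorem, in the form of Hodgson and Blumensath--Grädel). Since $\pi$ is surjective and $T_m(\pi(\alpha))=\{\gamma(m):\gamma\sim\alpha\}$, unwinding the definitions gives
\[
B_{n,F}\not\subseteq B_{n',F'}\iff \exists\alpha\in P\,\Big[\forall\gamma\,\big(\gamma\sim\alpha\rightarrow \gamma(n)\in F\big)\ \wedge\ \exists\beta\,\big(\beta\sim\alpha\wedge\beta(n')\notin F'\big)\Big].
\]
Thus it suffices to show three things, uniformly in $\delta$ and $\langle S|R\rangle$:
\begin{enumerate}
\item $P$ is an $\omega$-regular subset of $S^\omega$.
\item $\sim$ is $\omega$-regular as a synchronous binary relation on $S^\omega$.
\item For each $m$ and each $x\in P_m$, the unary predicate ``$\gamma(m)=x$'' is $\omega$-regular.
\end{enumerate}
All the corresponding automata must be effectively constructible.

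For (1), the language $P$ is the set of infinite paths of the trimmed automaton $W^{\mathrm{tr}}$ with every state accepting, so it is a safety (hence Büchi) language. For (3), I would use a finite-prefix automaton: it reads the first $m$ letters, compares the resulting element with $x$ using the computable labelled ball $B_m$, and afterwards accepts anything. For (2), I would use Fact~\ref{lem:ft}. The automaton reads pairs $(a_t,b_t)\in S\times S$ synchronously. Its state is an offset $g_t:=\alpha(t)^{-1}\beta(t)\in B(e,2\delta)$ together with the current states of two copies of $W^{\mathrm{tr}}$. The offset updates by $g_{t+1}=a_t^{-1}g_tb_t$, and the automaton rejects as soon as $|g_{t+1}|>2\delta$. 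This is a finite, effectively computable transition table, because multiplication within $B(e,2\delta+2)$ is read off from the computable labelled ball. All states are accepting, so again we have a safety automaton, and Fact~\ref{lem:ft} says precisely that it accepts exactly the pairs $\alpha\sim\beta$ in $P\times P$. Note that the displayed sentence only uses finitely many predicates of type (3), namely one for each element of $F$ and of $P_{n'}\setminus F'$. These are Boolean combinations of atomic formulas, so the whole sentence is a first-order sentence in the $\omega$-automatic structure $(P,\sim,\{\gamma(m)=x\})$.

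Given these, the decision procedure follows the standard route. Conjunction corresponds to product automata. Existential quantification corresponds to projection of Büchi automata. Negation, needed for the universal quantifier and for ``$\notin F$'', corresponds to Büchi complementation (Safra's construction, which is effective). Finally, emptiness of a Büchi automaton is decidable. Running this on the sentence above decides non-inclusion, and therefore decides inclusion.

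The step I expect to be the main obstacle is making sure that every ingredient is genuinely \emph{effective and uniform}. The delicate part is the offset automaton for $\sim$: it needs the exact group multiplication on $B(e,2\delta+2)$, obtained from the Dehn algorithm, and it has to be matched with the trimmed cone-type automaton, which was already argued to be computable. The use of complementation is a heavy but legitimate black box. If one wanted to avoid it, I would first try to exploit the safety nature of all the automata involved, combined with compactness in the spirit of Lemma~\ref{lem:shrink}. The idea would be to reduce the universally quantified clause to a finite search over prefixes of bounded length, with the bound computed from the number of states of the offset automaton. I would fall back on the general Büchi-automaton theorem if that reduction did not go through cleanly.
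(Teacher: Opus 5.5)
Your proof is correct, and its core reduction is the same as the paper's. Your sentence says precisely that $E_{n',F'}\not\subseteq E_{n,F}$, where $E_{n,F}:=\{\alpha\in P:\exists\beta\in P\,(\beta\sim\alpha\wedge\beta(n)\notin F)\}=P\setminus\pi^{-1}(B_{n,F})$.

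The two arguments differ in how they handle the existential quantifier over partners and the negation that follows. You build a synchronous safety automaton for $\sim$ on $P\times P$; your offset $g_t$ is exactly the offset $u$ carried in Phase~2 of the paper's automaton $\mathcal D_{n,F}$. You then invoke the general decidability of first-order theories of $\omega$-automatic structures, so projection and B\"uchi complementation stay black boxes. The paper instead determinizes the projection by hand. It runs a subset construction over partner sets: actual group elements up to level $n$, so that the constraint $\beta(n)\notin F$ can be imposed at step $n$, and pairs (cone type, offset) afterwards. K\"onig's lemma then shows that the run survives forever if and only if a partner $\beta$ exists. Hence $E_{n,F}$ is recognized by a \emph{deterministic} safety automaton. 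Deciding $E_{n',F'}\subseteq E_{n,F}$ then reduces to a reachability check in the product of two such automata, with no complementation at all. This is essentially the ``safety plus compactness'' fallback you sketch at the end.

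Your route is more modular and would decide any first-order condition on the sets $B_{n,F}$, including ones with quantifier alternations. The cost is heavy machinery (Safra-type complementation, with an exponential blow-up per negation). The paper's route is elementary and gives an explicit bound on the number of states.
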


For the moment we assume that Theorem \ref{inclusiondecidable} holds and show how it yields the computability of the action:

\begin{proposition}\label{prop:action}
The action of $G^\#$ on $\bG^\#$ is computable.
\end{proposition}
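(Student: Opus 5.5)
Since computability of the action does not depend on whether $\bG$ is viewed as a computable topological space or via the metric presentation (Theorem \ref{thm:metrize} gives computable equivalence), I will prove computability of the map $\xi\mapsto g\xi$ for each generator $g\in S$, uniformly in $g$, in the topological sense. Concretely, for each basic open $B_{n',F'}$ I will show that $g^{-1}B_{n',F'}$ is effectively open, uniformly in $(n',F')$, by exhibiting a c.e.\ set of indices $(n,F)$ whose basic sets union to it. Once this holds, the metric-space formulation follows: given a special point $x$ (a computable point), its image $gx$ is a computable point, so a special point within $\epsilon$ of it can be found by searching.

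The candidate enumeration is the set of $(n,F)$ with $gB_{n,F}\subseteq B_{n',F'}$. The difficulty is that this inclusion must be certified effectively. Lemma \ref{lem:basepoint}, applied with $|g|=1$, shows how to do this. If $\eta=\pi(\beta)$ with $\beta(n)\in F$, then the ray $g\beta$ starts at $g$. For $t>1+\delta$, each point $g\beta(t)$ lies within $3\delta$ of $\alpha'(s)$, where $\alpha'\in P$ represents $g\eta$ and $|s-t|\le 1+\delta$. Combining this with Fact \ref{lem:ft}, $T_m(g\eta)$ is contained in an explicitly computable finite set. Specifically, I will take
\[
\Phi_m(F):=\{y\in P_m : d(y,gx)\le 3\delta+2(1+\delta)+2\delta \text{ for some } x\in F\},
\]
valid when $m=n$ is large compared with $\delta$. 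This defines a computable map $(n,F)\mapsto (n,\Phi_n(F))$ satisfying $gB_{n,F}\subseteq B_{n,\Phi_n(F)}$, and it is uniformly bounded in diameter whenever $\diam F$ is.

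I will then define the enumeration as the set of $(n,F)$, with $n>1+\delta$, such that $B_{n,\Phi_n(F)}\subseteq B_{n',F'}$. By Theorem \ref{inclusiondecidable} this set is decidable, and soundness is immediate, since $gB_{n,F}\subseteq B_{n,\Phi_n(F)}\subseteq B_{n',F'}$. For richness, suppose $g\eta\in B_{n',F'}$ with $\eta=\pi(\beta)$. Let $F_n:=B(\beta(n),2\delta)\cap P_n$; then $\eta\in B_{n,F_n}$ by Lemma \ref{lem:basis}(4). The sets $\Phi_n(F_n)$ contain $T_n(g\eta)$ and have diameter bounded by a constant $D$ depending only on $\delta$. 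By Lemma \ref{lem:shrink}, the sets $B_{n,\Phi_n(F_n)}$ form a neighborhood basis at $g\eta$, so for some large $n$ we get $B_{n,\Phi_n(F_n)}\subseteq B_{n',F'}$, which is the required witness.

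The main obstacle is the geometric estimate $T_n(g\eta)\subseteq\Phi_n(F)$. Lemma \ref{lem:basepoint} only provides a matching point $\alpha'(s)$ at a shifted time $s$. I therefore have to convert this into a statement at the same time $n$ for an arbitrary representative of $g\eta$, using $|s-n|\le 1+\delta$, the fact that $\alpha'$ is geodesic, and Fact \ref{lem:ft}. Arranging that every representative $\alpha'$, not just one, satisfies the bound is where care is needed, and I will enlarge the constant in $\Phi$ generously so that this goes through.
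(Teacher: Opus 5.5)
Your proposal is correct and is essentially the paper's argument: the same ingredients appear, namely the transport bound from Lemma \ref{lem:basepoint}, Lemma \ref{lem:shrink} for the neighborhood basis at $g\eta$, and Theorem \ref{inclusiondecidable} to certify inclusions. The step you flag as the main obstacle is not one: Lemma \ref{lem:basepoint} applies to every representative $\alpha'$ of $g\eta$ directly, giving $d(\alpha'(n),g\beta(n))\le 4\delta+1$, which is well within your constant. The remaining differences are cosmetic: you phrase computability as effective openness of $g^{-1}B_{n',F'}$ rather than converting names, and your $\Phi_n(F)$ ranges over all of $F$ instead of the paper's single $x\in F$ with the filter $\diam F\le 4\delta$.
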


\begin{proof}
 We need to describe an algorithm which, given a word representing $g$ and a name of $\xi\in\bG$, enumerates a name of $g\xi$.  Take $\alpha,\alpha'\in P$ such that $\pi(\alpha)=\xi$ and $\pi(\alpha')=g\xi$, noting that $\alpha'$ is within finite Hausdorff distance of the geodesic $t\mapsto g\alpha(t)$ beginning at $g$.  Lemma~\ref{lem:basepoint} applied with $\gamma=g\alpha$ gives, for $n>|g|+\delta$, an $s$ with $|s-n|\le|g|+\delta$ and $d(g\alpha(n),\alpha'(s))\le3\delta$, from which it follows that $d(g\alpha(n),\alpha'(n))\le4\delta+|g|$. As $\alpha'$ was arbitrary, we have
\begin{equation}\label{eq:transport}
T_n(g\xi)\ \subseteq\ B\big(g\alpha(n),\,4\delta+|g|\big)\cap P_n\qquad(n>|g|+\delta).
\end{equation}

Now, given a name of $\xi$, enumerate the indices $(n,F)$ with $\xi\in B_{n,F}$, and retain those with $n>|g|+\delta$ and $\diam F\le4\delta$, a decidable procedure. By the proof of Lemma~\ref{lem:basis}, the index $\big(n,B(\alpha(n),2\delta)\cap P_n\big)$ is retained for \emph{every} such $n$, so the retained indices occur at an infinite set $N$ of levels. For each retained $(n,F)$, pick any $x\in F$; since $\alpha(n)\in T_n(\xi)\subseteq F$ and $\diam F\le4\delta$, we have $d(x,\alpha(n))\le4\delta$, hence $d(gx,g\alpha(n))\le4\delta$, and \eqref{eq:transport} gives
\[
T_n(g\xi)\ \subseteq\ F'_n:=B\big(gx,\,8\delta+|g|\big)\cap P_n .
\]
Output the index $(n,F'_n)$; thus $g\xi\in B_{n,F'_n}$.

We have $\diam F'_n\le D:=2(8\delta+|g|)$, a bound independent of $n$, and $T_n(g\xi)\subseteq F'_n$ for all $n\in N$. Lemma~\ref{lem:shrink} therefore makes $\{B_{n,F'_n}\}_{n\in N}$ a neighborhood basis at $g\xi$ of open sets.  

A name of $g\xi$ must enumerate every $j$ with $g\xi\in B_j$. Run the outputs $B_{n,F'_n}$ against all indices $j$, enumerating $j$ as soon as $B_{n,F'_n}\subseteq B_j$ is certified; since the inclusion relation is decidable by Theorem \ref{inclusiondecidable} (c.e. would suffice here), this is indeed possible.  This produces the desired name of $g\xi$.
\end{proof}

It remains to prove Theorem \ref{inclusiondecidable}.  Fix $n\in\N$ and $F\subseteq P_n$, and set
\[
E_{n,F}:=\{\alpha\in P : \text{ there is }\beta\in P \text{ with } \beta\sim\alpha \text{ and } \beta(n)\notin F\}.
\]
It is clear that $\pi^{-1}(B_{n,F})=P\setminus E_{n,F}$.  By surjectivity of $\pi$, we have that $B_{n,F}\subseteq B_{n',F'}$ if and only if $E_{n',F'}\subseteq E_{n,F}$.

We now build a deterministic safety automaton $\mathcal D_{n,F}$, reading $\alpha$, whose survival detects a partner $\beta$ as in the definition of $E_{n,F}$; in other words, the $\omega$-language determined by $D_{n,F}$ is precisely $E_{n,F}$.

\

\noindent \textbf{Phase 1, steps $t< n$}: States are pairs $(x,Q)$ with $x\in S_t$, $t\le n$, and $Q\subseteq B(x,2\delta)\cap S_t$.  (In what follows, the second element of the state pair is often referred to as the \emph{partner set}). The start state is $(e,\{e\})$. On a letter $s$ with $|xs|=|x|+1$, the transition function is defined as
\[
\tau\big((x,Q),s\big)=\Big(xs,\ \big\{ys' \ :\ y\in Q,\ s'\in S,\ |ys'|=|y|+1,\ d(xs,ys')\le2\delta\big\}\Big),
\]
and is undefined if $|xs|\ne|x|+1$.

\

\noindent \textbf{Step $n$}: On arriving at a state $(x,Q)$ with $|x|=n$, replace $Q$ by $Q\setminus F$.

\

\noindent \textbf{Phase 2, steps $t>n$}: States are pairs $(c,R)$ where $c$ is a cone type and $R$ is a set of pairs $(c',u)$ with $c'$ a cone type and $u\in B(e,2\delta)$. The transition from phase 1 at time $n$ sends $(x,Q)$ to $\big(\ct(x),\{(\ct(y),x^{-1}y):y\in Q\}\big)$. Thereafter, on a letter $s\in c$,
\[
\tau\big((c,R),s\big)=\Big(c\cdot s,\ \big\{(c'\cdot s',\ s^{-1}us') \ :\ (c',u)\in R,\ s'\in c',\ |s^{-1}us'|\le2\delta\big\}\Big),
\]
undefined if $s\notin c$. This is well defined because whether $s'$ extends a partner geodesically depends only on its cone type $c'$, the new cone type is $c'\cdot s'$, and the new offset is
\[
\alpha(t+1)^{-1}\beta(t+1)=(\alpha(t)s)^{-1}(\beta(t)s')=s^{-1}\big(\alpha(t)^{-1}\beta(t)\big)s'=s^{-1}us' .
\]

In both phases the second component may become empty; in this case, make $\emptyset$ absorbing, so that once the partner set dies it stays dead while the machine continues to consume letters. The state set is indeed finite: in phase 1 there are at most $|B(e,n)|\cdot2^{|B(e,2\delta)|}$ states, and in phase 2 there are at most $K\cdot2^{K|B(e,2\delta)|}$ where $K$ is the number of cone types.

\begin{proposition}\label{lem:key}
   Let $\alpha\in P$. Then $\alpha\in E_{n,F}$ if and only if the partner component of the run of $\mathcal D_{n,F}$ on $\alpha$ is nonempty at every step. 
\end{proposition}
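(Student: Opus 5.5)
The plan is to prove an invariant describing the partner component at each time $t$, and then pass to infinite rays by a compactness (K\"onig's lemma) argument. By induction on $t$, I will show the following for the run of $\mathcal D_{n,F}$ on $\alpha$.

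For $t<n$, the state is $(\alpha(t),Q_t)$. Here $Q_t$ is the set of $y\in S_t$ for which there is a geodesic word $v$ of length $t$ from $e$ to $y$ whose prefixes satisfy $d(\alpha(i),v(i))\le 2\delta$ for all $i\le t$.

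At time $n$ the same description holds, with the extra requirement $v(n)\notin F$.

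For $t>n$, the state is $(\ct(\alpha(t)),R_t)$, where $R_t=\{(\ct(v(t)),\alpha(t)^{-1}v(t))\}$ and $v$ ranges over geodesic words of length $t$ satisfying the fellow-travel condition up to time $t$ and $v(n)\notin F$.

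The induction step is exactly the transition rule. The only inputs are the offset computation displayed before the statement and the fact that $|v(t)s'|=|v(t)|+1$ iff $s'\in\ct(v(t))$. Cone types compose ($\ct(ys')$ depends only on $\ct(y)$ and $s'$), which is Cannon's finiteness fact already used to build $W$.

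There is one subtlety in Phase 2. The partner component records pairs (cone type, offset), not the words $v$ themselves. So I must check that distinct partners with the same pair have identical sets of admissible extensions. This holds because admissibility of $s'$ depends only on $\ct(v(t))$, and the new offset depends only on $u$, $s$ and $s'$. Hence the recorded set is precisely the image of the set of admissible partial partners, and the invariant survives the quotient.

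I also need $P$-membership of partners, not merely geodesicity. A partner prefix that fellow-travels $\alpha$ for all time extends forever, so it automatically lies in $W^{\mathrm{tr}}$. Alternatively, the automaton can use trimmed cone types; I would note that either choice gives the same set of infinite limits.

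Now the two directions.

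($\Rightarrow$) Suppose $\alpha\in E_{n,F}$, witnessed by $\beta\sim\alpha$ with $\beta(n)\notin F$. By Fact \ref{lem:ft}, $d(\alpha(t),\beta(t))\le 2\delta$ for all $t$. So each prefix of $\beta$ is an admissible partial partner, and by the invariant the partner component is nonempty at every step.

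($\Leftarrow$) Suppose the partner component is nonempty at every $t$. By the invariant, for each $t$ there is a geodesic word $v_t$ of length $t$ that $2\delta$-fellow-travels $\alpha$ up to time $t$ and has $v_t(n)\notin F$. Admissible partial partners form a finitely branching tree closed under prefixes: restricting a partner of length $t$ to length $t'<t$ gives a partner of length $t'$. This tree has nodes at every level, so K\"onig's lemma gives an infinite branch $\beta$. Then $\beta$ is a geodesic ray from $e$, so $\beta\in P$. Moreover $d(\alpha(t),\beta(t))\le 2\delta$ for all $t$ and $\beta(n)\notin F$. By Fact \ref{lem:ft}, or directly since the Hausdorff distance is at most $2\delta$, we get $\beta\sim\alpha$, so $\alpha\in E_{n,F}$.

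I expect the main obstacle to be the bookkeeping in the invariant across the quotient in Phase 2. Specifically, one must show that forgetting the actual partner positions in favour of (cone type, offset) loses no information relevant to future transitions. One must also confirm that the absorbing $\emptyset$ state correctly reflects that no admissible partial partner of that length exists. I would handle this by proving the invariant as a set equality, giving a separate inclusion argument for each direction of the transition.
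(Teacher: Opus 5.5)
Your proposal is correct and follows essentially the same route as the paper. The paper also introduces the prefix-closed, finitely branching tree $T$ of finite geodesic words that $2\delta$-fellow-travel $\alpha$ (with $v(n)\notin F$ once $t\ge n$), shows by induction that the partner component at time $t$ is the set of endpoints of level-$t$ nodes in Phase 1 (or their images $(\ct(\bar v),\alpha(t)^{-1}\bar v)$ in Phase 2), and finishes with Fact~\ref{lem:ft} and K\"onig's lemma as you do. One small correction: the fact that $\ct(ys')$ depends only on $\ct(y)$ and $s'$ is the elementary identity $\ct(ys')=\{w : s'w\in\ct(y)\}$ for $s'\in\ct(y)$, not Cannon's finiteness theorem, which is needed only to make the state set finite.
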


\begin{proof}
Let $T$ be the set of finite words $v$ over $S$, say of length $t$, such that (i) $v$ is geodesic, (ii) $d(\alpha(s),v(s))\le2\delta$ for all $s\le t$, and (iii) if $t\ge n$, then $v(n)\notin F$. Note that $T$ is a finitely branching tree under prefixes.

An induction on $t$ shows that the partner component of the run at time $t$ is exactly the set of endpoints $\{\bar v : v\in T,\ |v|=t\}$ in phase 1 (where $\bar v$ denotes the element of $G$ represented by $v$), and its image under $v\mapsto(\ct(\bar v),\alpha(t)^{-1}\bar v)$ in phase 2. In particular, the partner component is nonempty at time $t$ if and only if $T$ has a node at level $t$.

If $\alpha\in E_{n,F}$, a witness $\beta$ has all its prefixes in $T$, so $T$ has nodes at every level. Conversely if $T$ has nodes at every level then, being an infinite finitely branching tree, it has an infinite branch $\beta$ by K\"onig's lemma. All prefixes of $\beta$ are geodesic, so $\beta\in P$; condition (ii) gives $\beta\sim\alpha$; condition (iii) gives $\beta(n)\notin F$. Hence $\alpha\in E_{n,F}$.
\end{proof}

At this point we have established that $E_{n,F}$ is a so-called safety language.  Since it is well-known that the inclusion relation between safety languages is decidable (see, for example, \cite{KV}), Theorem \ref{inclusiondecidable} is now proven.

All of the necessary ingredients have been established and thus the proof of Theorem \ref{maintheorem} is complete.

\appendix

\section{An alternate proof for free groups}\label{cuntz}

Consider the usual embedding of $\mathbb{F}_2$ in $\mathbb{Z}_2*\mathbb{Z}_3$:  if $a$ and $b$ are the generators of $\mathbb{Z}_2$ and $\mathbb{Z}_3$, then $bab^{-1}a$ and $b^{-1}aba$ generate a copy of $\mathbb{F}_2$.  This induces an embedding of $C^*_r(\mathbb{F}_2)$ into $C^*_r(\mathbb{Z}_2*\mathbb{Z}_3)$, which is clearly computable given the explicit nature of the embedding on the level of groups.  So it suffices to show that the norm on $C^*_r(\mathbb{Z}_2*\mathbb{Z}_3)$ is computable.  However, the latter embeds in $\mathcal{O}_2$ in a concrete way by a theorem of Choi \cite{choi}:  if $S_1$ and $S_2$ denote the canonical generators of $\mathcal{O}_2$, then mapping $a$ to $S_2S_1^*+S_1S_2^*$ and $b$ to $S_2S_1S_1^*+S_2S_2S_1^*S_2^*+S_2S_2^*S_2^*$ extends to a computable embedding of $C^*_r(\mathbb{Z}_2*\mathbb{Z}_3)$ into $\mathcal{O}_2$; since the latter algebra has computable norm (see \cite[Corollary 3.16]{fox}), we are done.  Since each $\mathbb{F}_n$ admits a computable embedding into $\mathbb{F}_2$, this also yields the same results for $C^*_r(\mathbb{F}_n)$, with $n\in \{2,3,\ldots,\infty\}$.  By embedding $C^*_r(\mathbb{F}_2\times \mathbb{F}_2)$ into $\mathcal{O}_2\otimes \mathcal{O}_2$ in the analogous way, this argument can be adapted to establish computability of the standard presentation of $C^*_r(\mathbb{F}_2\times \mathbb{F}_2)$.

\section{The effective metrization theorem}\label{metrizationappendix}

In this appendix, we prove Theorem \ref{thm:metrize}.  Throughout this appendix, $X$ is computably compact and computably Hausdorff, with basis $(B_i)_{i\in \N}$, inclusion relation $\Incl$, and Hausdorff witness set $W$. Overtness is not assumed until later when it is actually needed to produce the set of special points of the presentation.

By a \emph{formal open} we simply mean a finite set $A\subseteq\N$, meant to denote $B_A:=\bigcup_{i\in A}B_i$.  Note that $B_\emptyset=\emptyset$.

For $i,k\in \N$, set $i\le k$ if and only if $(k,j,i)\in\Incl$ or $(j,k,i)\in\Incl$ for some $j\in \N$, and let $\le^\ast$ be the reflexive transitive closure of $\leq$. Note that both relations are c.e. and that $i\le^\ast k$ implies $B_i\subseteq B_k$.

\begin{lemma}\label{lem:refine}
Given $k_1,\dots,k_p\in \N$ and a point $x\in B_{k_1}\cap\cdots\cap B_{k_p}$, one can find $i\in \N$ with $x\in B_i$ and $i\le^\ast k_t$ for every $t$. The search terminates whenever such an $x$ exists.
\end{lemma}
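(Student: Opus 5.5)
The plan is to induct on $p$, using the richness of $\Incl$ at each step, and to realize the whole thing as a dovetailed search over c.e. data.

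For $p=1$, take $i=k_1$, since $\le^\ast$ is reflexive. For $p\ge 2$, suppose we already have $i'$ with $x\in B_{i'}$ and $i'\le^\ast k_t$ for $t<p$. Then $x\in B_{i'}\cap B_{k_p}$, so richness supplies some $i$ with $x\in B_i$ and $(i',k_p,i)\in\Incl$. By the definition of $\le$, this gives $i\le i'$ and $i\le k_p$. Transitivity of $\le^\ast$ then yields $i\le^\ast k_t$ for every $t\le p$. This settles existence.

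For the effective search, ``given a point $x$'' means given a name of $x$, that is, an enumeration of $\{j: x\in B_j\}$. The algorithm dovetails three processes:
\begin{itemize}
\item the enumeration of the name of $x$;
\item the enumeration of $\Incl$;
\item the enumeration of $\le^\ast$, which is c.e. as the reflexive transitive closure of the c.e. relation $\le$.
\end{itemize}
It halts at the first $i$ that has appeared in the name of $x$ and for which $i\le^\ast k_t$ has been enumerated for every $t$.

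Soundness of the output is immediate from the halting condition. Termination follows from the existence argument: when $x\in\bigcap_t B_{k_t}$, some such $i$ exists. Its membership in the name of $x$ and the finitely many facts $i\le^\ast k_t$ all appear after finitely many steps, so the dovetailed search eventually finds it. One could instead run the induction directly, searching at each stage for the witness $(i',k_p,i)\in\Incl$ with $i$ in the name of $x$. The single global search is simpler and avoids nested termination issues.

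The main point needing care is making sure that $\le$ is oriented correctly with respect to the convention $(i,j,k)\in\Incl\Rightarrow B_k\subseteq B_i\cap B_j$. Here a triple $(k,j,i)$ witnesses $i\le k$, so the new index $i$ plays the role of the third coordinate. The other thing to check is that the c.e.-ness of $\le^\ast$ is genuinely uniform. It is: a finite chain $i=i_0\le i_1\le\cdots\le i_m=k$, each link witnessed by some element of $\Incl$, is a finite certificate that can be searched for.
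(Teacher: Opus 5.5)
Your proof is correct and follows the paper's argument: apply richness repeatedly to $x\in B_{m_{t-1}}\cap B_{k_t}$, with $\le$ oriented as you checked, and conclude by transitivity of $\le^\ast$. The only differences are cosmetic: you start from $i=k_1$ by reflexivity instead of applying richness to $B_{k_1}\cap B_{k_1}$, and you spell out the dovetailed search from a name of $x$, which the paper leaves implicit (its later uses of the lemma need only the existence of such an $i$).
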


\begin{proof}
Richness of $\Incl$ applied to $x\in B_{k_1}\cap B_{k_1}$ gives $m_1$ with $x\in B_{m_1}$ and $m_1\le k_1$. Inductively, richness applied to $x\in B_{m_{t-1}}\cap B_{k_t}$ gives $m_t$ with $x\in B_{m_t}$, $m_t\le m_{t-1}$ and $m_t\le k_t$. Then $i:=m_p$ satisfies $i\le^\ast k_t$ for every $t$ by transitivity.
\end{proof}

For $i,j\in \N$, set $\mathrm{Ap}(i,j)$ if and only if there exists $(k,l)\in W$ with $i\le^\ast k$ and $j\le^\ast l$.
This relation is c.e. and $\mathrm{Ap}(i,j)$ implies $B_i\cap B_j=\emptyset$.  The relation $\mathrm{Ap}(\cdot,\cdot)$ is \emph{downward closed} in each argument: if $\mathrm{Ap}(i,j)$ and $i'\le^\ast i$, $j'\le^\ast j$, then $\mathrm{Ap}(i',j')$. For formal opens $A$ and $B$, set
\[
\mathrm{Ap}(A,E)\quad:\iff\quad \mathrm{Ap}(i,j) \text{ for all } i\in A,\ j\in E,
\]
which is c.e., implies $B_A\cap B_E=\emptyset$, is monotone under unions in $A$, and is downward closed in $E$.

Finally, for formal opens $A$ and $B$, set $A\prec B$ if and only there exists finite $E\subseteq \N$ with $\mathrm{Ap}(A,E)$ and $X=B_B\cup B_E$.
We call such an $E$ a \emph{witness}. The relation $\prec$ is c.e. as we may search over finite $E$, checking $\mathrm{Ap}(A,E)$ and, by computable compactness, if $X=B_B\cup B_E$.

\begin{lemma}\label{lem:prec-down}
If $A\prec B$ with witness $E$, and $A'$ is a formal open such that every $i'\in A'$ satisfies $i'\le^\ast i$ for some $i\in A$, then $E$ also witnesses $A'\prec B$.
\end{lemma}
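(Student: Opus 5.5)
The statement unwinds directly from the definitions; the argument uses only the fact that $\mathrm{Ap}$ is downward closed in its first argument. Suppose $E$ witnesses $A\prec B$. By definition this means two things: $\mathrm{Ap}(A,E)$ holds, and $X=B_B\cup B_E$. The covering condition $X=B_B\cup B_E$ does not mention $A$ at all, so it continues to hold verbatim when $A$ is replaced by $A'$. Hence the only thing to verify is $\mathrm{Ap}(A',E)$.

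By the definition of $\mathrm{Ap}$ on formal opens, we must show $\mathrm{Ap}(i',j)$ for every $i'\in A'$ and $j\in E$. Fix such $i'$ and $j$. By hypothesis there is some $i\in A$ with $i'\le^\ast i$. From $\mathrm{Ap}(A,E)$ we get $\mathrm{Ap}(i,j)$. This means there is a pair $(k,l)\in W$ with $i\le^\ast k$ and $j\le^\ast l$.

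Since $\le^\ast$ is transitive (it is a reflexive transitive closure), $i'\le^\ast i\le^\ast k$ gives $i'\le^\ast k$. The same pair $(k,l)$ then witnesses $\mathrm{Ap}(i',j)$. This is exactly the downward closure of $\mathrm{Ap}(\cdot,\cdot)$ in its first argument, as noted earlier in the appendix.

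Since $i'\in A'$ and $j\in E$ were arbitrary, $\mathrm{Ap}(A',E)$ holds. Together with $X=B_B\cup B_E$, this shows that $E$ witnesses $A'\prec B$. There is no real obstacle here. The one point to be careful about is that the hypothesis only gives a possibly different $i\in A$ for each $i'\in A'$; that is harmless, because $\mathrm{Ap}(A,E)$ quantifies over all elements of $A$.
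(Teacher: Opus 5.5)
Your proposal is correct and follows the same approach as the paper: the covering condition $X=B_B\cup B_E$ is unaffected, and $\mathrm{Ap}(A',E)$ follows from $\mathrm{Ap}(A,E)$ because $\mathrm{Ap}(\cdot,\cdot)$ is downward closed in its first argument. The only difference is that you spell out that downward closure via transitivity of $\le^\ast$, which the paper simply cites.
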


\begin{proof}
$\mathrm{Ap}(A,E)$ gives $\mathrm{Ap}(i,j)$ for $i\in A$, $j\in E$, and $\mathrm{Ap}(\cdot,\cdot)$ is downward closed in its first argument, so $\mathrm{Ap}(i',j)$.
\end{proof}

\begin{lemma}\label{lem:prec-sound}
If $E$ witnesses $A\prec B$ then $\overline{B_A}\subseteq X\setminus B_E\subseteq B_B$.
\end{lemma}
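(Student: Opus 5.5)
We need to prove: if $E$ witnesses $A\prec B$, then $\overline{B_A}\subseteq X\setminus B_E\subseteq B_B$. The plan is to verify the two inclusions separately, each directly from one clause of the definition of a witness.

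For the second inclusion, the witness condition includes $X=B_B\cup B_E$. Any point not in $B_E$ must therefore lie in $B_B$, which gives $X\setminus B_E\subseteq B_B$ with no further work.

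For the first inclusion, the witness condition also includes $\mathrm{Ap}(A,E)$.
\begin{itemize}
\item As noted right after the definition of $\mathrm{Ap}$ on formal opens, $\mathrm{Ap}(A,E)$ implies $B_A\cap B_E=\emptyset$. This itself unwinds as follows: for $i\in A$ and $j\in E$, $\mathrm{Ap}(i,j)$ provides $(k,l)\in W$ with $i\le^\ast k$ and $j\le^\ast l$. Then $B_i\subseteq B_k$ and $B_j\subseteq B_l$, and $B_k\cap B_l=\emptyset$ because $B_k\times B_l$ lies off the diagonal.
\item Hence $B_A\subseteq X\setminus B_E$.
\item $B_E$ is a finite union of basic open sets, so it is open, and $X\setminus B_E$ is closed.
\item A closed set containing $B_A$ contains its closure, so $\overline{B_A}\subseteq X\setminus B_E$.
\end{itemize}

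No step is a real obstacle. The only point needing care is remembering that disjointness from an open set passes to the closure, which is exactly why the closed middle term $X\setminus B_E$ is used.
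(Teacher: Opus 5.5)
Your proposal is correct and follows essentially the same route as the paper: the second inclusion comes directly from $X=B_B\cup B_E$, and the first from $\mathrm{Ap}(A,E)$ giving $B_A\cap B_E=\emptyset$, together with the fact that $X\setminus B_E$ is closed. Your extra step unwinding why $\mathrm{Ap}$ forces disjointness, via the Hausdorff witness set $W$ and $\le^\ast$, is accurate; the paper leaves it implicit.
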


\begin{proof}
By definition, we have $B_A\cap B_E=\emptyset$, so $B_A\subseteq X\setminus B_E$, which is closed, whence $\overline{B_A}\subseteq X\setminus B_E$. Since $X=B_B\cup B_E$, we have $X\setminus B_E\subseteq B_B$.
\end{proof}

Note that $\prec$ is only a \emph{subrelation} of the relation $\overline{B_A}\subseteq B_B$ on formal opens; this is intentional as the full relation is $\Pi^0_1$ and hence there is no prospect of effectively enumerating it. What must be shown is that the subrelation is large enough for our purposes, which is the content of the next three lemmas.

\begin{lemma}\label{lem:rich}
Let $B$ be a formal open and $x\in B_B$. Then there is $i$ with $x\in B_i$ and $\{i\}\prec B$.
\end{lemma}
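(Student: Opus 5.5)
The plan is to separate $x$ from the closed set $X\setminus B_B$ using compactness together with the Hausdorff witness set $W$, and then to shrink to a single basic neighborhood of $x$ via Lemma~\ref{lem:refine}.

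\textbf{Step 1 (pointwise separation).} For each $y\in X\setminus B_B$ we have $x\neq y$. By the definition of $W$ there is $(k_y,l_y)\in W$ with $x\in B_{k_y}$ and $y\in B_{l_y}$. Trivially $k_y\le^\ast k_y$ and $l_y\le^\ast l_y$, so $\mathrm{Ap}(k_y,l_y)$ holds.

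\textbf{Step 2 (compactness).} The set $K:=X\setminus B_B$ is closed in the compact space $X$, hence compact. It is covered by the sets $B_{l_y}$, so finitely many of them suffice, say those for $y_1,\dots,y_p$. Put $E:=\{l_{y_1},\dots,l_{y_p}\}$. Then $X=B_B\cup B_E$.

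If $K=\emptyset$, take $E=\emptyset$. In that case $\mathrm{Ap}(\{i\},\emptyset)$ holds vacuously for any $i$, so it suffices to pick any $i\in B$ with $x\in B_i$.

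\textbf{Step 3 (single basic neighborhood).} The point $x$ lies in $B_{k_{y_1}}\cap\cdots\cap B_{k_{y_p}}$. By Lemma~\ref{lem:refine} there is $i$ with $x\in B_i$ and $i\le^\ast k_{y_t}$ for every $t$.

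Fix $t\le p$. The pair $(k_{y_t},l_{y_t})\in W$ satisfies $i\le^\ast k_{y_t}$ and $l_{y_t}\le^\ast l_{y_t}$, so $\mathrm{Ap}(i,l_{y_t})$ holds. Since this holds for every element of $E$, we get $\mathrm{Ap}(\{i\},E)$. Combined with $X=B_B\cup B_E$ from Step 2, this shows that $E$ witnesses $\{i\}\prec B$.

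\textbf{Main obstacle.} The delicate point is Step 3. The relation $\mathrm{Ap}$ applies only to indices below a pair actually listed in $W$; an arbitrary basic set disjoint from the $B_{l_{y_t}}$ would not do. So $i$ must lie $\le^\ast$-below each $k_{y_t}$ simultaneously, not merely have $B_i$ contained in their intersection. This is exactly what Lemma~\ref{lem:refine} supplies.

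No effectivity is needed here, since the lemma is purely existential.
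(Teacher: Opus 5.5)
Your proof is correct and matches the paper's argument essentially verbatim. As in the paper, you cover the compact set $X\setminus B_B$ by finitely many $B_{l_y}$ coming from Hausdorff witnesses $(k_y,l_y)\in W$, then use Lemma~\ref{lem:refine} to obtain a single $i$ with $x\in B_i$ and $i\le^\ast k_{y_t}$ for all $t$, which gives $\mathrm{Ap}(\{i\},E)$. Your separate treatment of the case $X=B_B$, with $E=\emptyset$, likewise matches the paper's parenthetical remark.
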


\begin{proof}
For each $y\in X\setminus B_B$, there is $(k_y,l_y)\in W$ with $x\in B_{k_y}$, $y\in B_{l_y}$. The sets $B_{l_y}$ cover the compact set $X\setminus B_B$, so finitely many suffice: say $E:=\{l_1,\dots,l_p\}$ with $X=B_B\cup B_E$, and with associated $k_1,\dots,k_p$ all containing $x$. (If $X=B_B$ take $E=\emptyset$ and any $i$ with $x\in B_i$.)

By Lemma~\ref{lem:refine}, there is $i$ with $x\in B_i$ and $i\le^\ast k_t$ for all $t$. Since $(k_t,l_t)\in W$ and $l_t\le^\ast l_t$, we get $\mathrm{Ap}(i,l_t)$ for every $t$, that is $\mathrm{Ap}(\{i\},E)$. It follows that $E$ witnesses $\{i\}\prec B$.
\end{proof}

\begin{lemma}\label{lem:union}
From witnesses for $A_1\prec B,\dots,A_r\prec B$, one can compute a witness for $A_1\cup\cdots\cup A_r\prec B$.
\end{lemma}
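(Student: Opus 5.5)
Given witnesses $E_1,\dots,E_r$ for $A_1\prec B,\dots,A_r\prec B$, I would take as candidate witness the formal open
\[
E:=\{\,j : j\le^\ast l_1,\dots,j\le^\ast l_r \text{ for some } l_1\in E_1,\dots,l_r\in E_r\,\},
\]
but since this set is infinite, I will instead search for a finite set of this form. Concretely, the algorithm enumerates finite sets $E\subseteq\N$. For each such $E$ it semi-decides two things: that every $j\in E$ satisfies, for each $t\le r$, $j\le^\ast l$ for some $l\in E_t$ (this is c.e., since $\le^\ast$ is c.e.); and that $X=B_B\cup B_E$ (this is c.e. by computable compactness). It halts on the first $E$ for which both are confirmed.

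Any $E$ found this way is a witness. Fix $i\in A_s$ and $j\in E$. By the first condition, $j\le^\ast l$ for some $l\in E_s$. Since $E_s$ witnesses $A_s\prec B$, we have $\mathrm{Ap}(i,l)$. Downward closure of $\mathrm{Ap}$ in the second argument then gives $\mathrm{Ap}(i,j)$. As $i$ and $j$ were arbitrary, $\mathrm{Ap}(A_1\cup\cdots\cup A_r,E)$ holds, and together with $X=B_B\cup B_E$ this shows $E$ witnesses $A_1\cup\cdots\cup A_r\prec B$.

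The main obstacle is termination, i.e.\ showing that some finite such $E$ exists. I would use compactness as follows. For each $t$ we have $X=B_B\cup B_{E_t}$, so every $y\in X\setminus B_B$ lies in $B_{E_1}\cap\cdots\cap B_{E_r}$. Choose $l_t\in E_t$ with $y\in B_{l_t}$ for each $t$, and apply Lemma~\ref{lem:refine} to obtain $j_y$ with $y\in B_{j_y}$ and $j_y\le^\ast l_t$ for every $t$. The sets $B_{j_y}$ cover the closed, hence compact, set $X\setminus B_B$, so finitely many of them suffice. Their indices form a finite $E$ meeting both conditions, so the search terminates. (If $X=B_B$, then $E=\emptyset$ works trivially.)

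Because the algorithm only ever checks the c.e. conditions, and never uses this existence argument beyond guaranteeing termination, the witness is computed uniformly from the given witnesses $E_1,\dots,E_r$.
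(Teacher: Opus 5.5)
Your proof is correct and is essentially the paper's argument. Your search condition on $E$ is exactly membership in the paper's c.e.\ family $\mathcal G$ of indices $m$ with $m\le^\ast e_s$ for all $s$, for some $\vec e\in E_1\times\cdots\times E_r$. Your termination argument via Lemma~\ref{lem:refine} and compactness of $X\setminus B_B$ is the paper's observation that $\mathcal G$ covers $\bigcap_s B_{E_s}\supseteq X\setminus B_B$ before it applies computable compactness, and your verification of $\mathrm{Ap}$ via downward closure in the second argument is identical to the paper's.
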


\begin{proof}
Let $E_s$ witness $A_s\prec B$. Then $X\setminus B_B\subseteq B_{E_s}$ for each $s$, so $$X\setminus B_B\subseteq\bigcap_s B_{E_s}=\bigcup_{\vec e\,\in\,E_1\times\cdots\times E_r}\ \bigcap_{s}B_{e_s}.$$

By Lemma~\ref{lem:refine}, each $\bigcap_s B_{e_s}$ is the union of those basic $B_m$ with $m\le^\ast e_s$ for all $s$. Hence the c.e.\ family
\[
\mathcal G:=\big\{\,m\in \N\ :\ \text {there is }\vec e\in E_1\times\cdots\times E_r \text{ with } m\le^\ast e_s \text{ for all } s\,\big\}
\]
has $\bigcup_{m\in\mathcal G}B_m=\bigcap_s B_{E_s}\supseteq X\setminus B_B$. Since $X\setminus B_B$ is effectively closed and $\bigcup_{m\in\mathcal G}B_m$ effectively open, computable compactness produces a finite $E\subseteq\mathcal G$ with $X=B_B\cup B_E$.

It remains to check $\mathrm{Ap}(A_1\cup\cdots\cup A_r,E)$. Let $i\in A_s$ and $m\in E$, and let $\vec e$ witness $m\in\mathcal G$. Then $\mathrm{Ap}(i,e_s)$ because $\mathrm{Ap}(A_s,E_s)$ and $e_s\in E_s$; since $m\le^\ast e_s$, downward closure of $\mathrm{Ap}(\cdot,\cdot)$ in its second argument gives $\mathrm{Ap}(i,m)$.
\end{proof}

\begin{lemma}\label{lem:interp}
From a witness for $A\prec B$, one can compute a formal open $C$ and witnesses for
\[
A\prec C\prec B .
\]
\end{lemma}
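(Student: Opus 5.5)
The plan is to build $C$ as a finite union of basic sets, each of which is strictly ``inside'' $B$ and strictly ``outside'' $\overline{B_A}$. First we need the closed set: let $E$ witness $A\prec B$, and set $K:=X\setminus B_E$. By Lemma~\ref{lem:prec-sound} we have $\overline{B_A}\subseteq K\subseteq B_B$, and $K$ is effectively closed, hence computably compact as a closed subset of $X$.

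For each $x\in K$, Lemma~\ref{lem:rich} (applied with $x\in B_B$) gives some $i$ with $x\in B_i$ and $\{i\}\prec B$. The relation $\prec$ is c.e., so we can enumerate the c.e.\ family $\mathcal H:=\{i : \{i\}\prec B\}$, each member coming with an explicit witness. Its members cover $K$, so computable compactness lets us search for a finite $C\subseteq\mathcal H$ with $X=B_C\cup B_E$. This search terminates, since $K\subseteq\bigcup_{i\in\mathcal H}B_i$. Lemma~\ref{lem:union} then combines the witnesses for the $\{i\}\prec B$, $i\in C$, into a witness for $C\prec B$.

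The remaining step is $A\prec C$, and I expect it to be the main obstacle: we need a finite $E'$ with $\mathrm{Ap}(A,E')$ and $X=B_C\cup B_{E'}$, and $E$ itself need not work. The inclusion $X=B_C\cup B_E$ holds, but $E'$ must be chosen inside the region apart from $A$. I would take $E'=E$, since $\mathrm{Ap}(A,E)$ holds by hypothesis and $X=B_C\cup B_E$ holds by the choice of $C$. So $E$ witnesses $A\prec C$ directly, and the obstacle disappears.

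The only delicate point is that the closed set being covered must be $X\setminus B_E$ rather than $\overline{B_A}$, so that the same $E$ serves as the witness. Every search in the construction is effective: we enumerate $\mathcal H$ together with its witnesses, and we check candidate finite covers using the c.e.\ set $\Cov$. Hence $C$ and both witnesses are computed uniformly from the given witness $E$.
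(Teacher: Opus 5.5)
Your proof is correct and is essentially the paper's argument. You cover the effectively closed set $X\setminus B_E$ by the c.e.\ family $\{B_i:\{i\}\prec B\}$ (a cover by Lemma~\ref{lem:rich}), extract a finite $C$ by computable compactness, obtain $C\prec B$ from Lemma~\ref{lem:union}, and note that the original witness $E$ already witnesses $A\prec C$; the ``obstacle'' you anticipate in your third paragraph is resolved exactly this way in the paper as well.
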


\begin{proof}
Let $E$ witness $A\prec B$ and put $Z:=X\setminus B_E$, an effectively closed set, with $Z\subseteq B_B$ by Lemma~\ref{lem:prec-sound}.

The family $\{B_i:\{i\}\prec B\}$ is effectively enumerable, its index set being c.e.\ because $\prec$ is c.e.. It covers $Z$ as any $z\in Z$ lies in $B_B$, so Lemma~\ref{lem:rich} supplies a suitable $i$ with $\{i\}\prec B$ and $z\in B_i$. By computable compactness, one can compute a finite $C$ with $Z\subseteq B_C$ and $\{i\}\prec B$ for each $i\in C$.

It follows that $C\prec B$, with a witness computed by Lemma~\ref{lem:union}. Note also that $E$ itself witnesses $A\prec C$: we already have $\mathrm{Ap}(A,E)$, and $X=B_C\cup B_E$ because $X\setminus B_E=Z\subseteq B_C$.
\end{proof}

We next prove an effective version of Urysohn's lemma in our context.

\begin{lemma}\label{lem:urysohn}
From a witness for $A\prec B$, one can find $f:X\to[0,1]$ with $f\equiv0$ on $B_A$ and $f\equiv1$ off $B_B$, such that $\{f<q\}$ and $\{f>q\}$ are effectively open uniformly in the witness for $A\prec B$ and $q\in\Q$.
\end{lemma}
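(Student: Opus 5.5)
We follow the classical dyadic proof of Urysohn's lemma, using Lemma~\ref{lem:interp} in place of normality. Set $U_0:=A$ and $U_1:=B$, with the given witness for $U_0\prec U_1$. By recursion on the dyadic rationals $r\in(0,1)$, ordered by denominator, we compute formal opens $U_r$ together with witnesses for $U_r\prec U_{r'}$ whenever $r<r'$ are dyadic in $[0,1]$.

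At stage $m$ the dyadics with denominator $2^m$ are already handled. For each consecutive pair $r<r'$ we apply Lemma~\ref{lem:interp} to the witness for $U_r\prec U_{r'}$. This produces $U_{(r+r')/2}$ and witnesses for $U_r\prec U_{(r+r')/2}\prec U_{r'}$. Witnesses for non-consecutive pairs are needed only to make the relations hold, and by Lemma~\ref{lem:prec-sound} the chain of consecutive relations already yields the nested closures. So it suffices to record witnesses for consecutive pairs, and the whole construction is uniformly computable from the initial witness. Lemma~\ref{lem:prec-sound} gives
\[
\overline{B_{U_r}}\subseteq B_{U_{r'}}\qquad(r<r').
\]

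Define $f(x):=\inf\{r : x\in B_{U_r}\}$, with $\inf\emptyset:=1$. The standard argument then gives the following:
\begin{itemize}
\item $f\equiv 0$ on $B_A=B_{U_0}$.
\item $f\equiv 1$ off $B_B$, since $B_{U_r}\subseteq B_B$ for all $r$.
\item $f$ takes values in $[0,1]$ and is continuous.
\end{itemize}

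For the effectivity claim we use the identities
\[
\{f<q\}=\bigcup_{r<q} B_{U_r},\qquad \{f>q\}=\bigcup_{r>q}\bigl(X\setminus\overline{B_{U_r}}\bigr).
\]
The first union is over dyadics $r<q$, and the second is over dyadics $r>q$ (and is all of $X$ when $q<0$). The first set is plainly effectively open, since the $U_r$ are computed uniformly.

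The main obstacle is the second set, because closures $\overline{B_{U_r}}$ need not be effectively closed. The fix is to replace them with the witness sets. Let $E_{r,r'}$ be the witness for $U_r\prec U_{r'}$. By Lemma~\ref{lem:prec-sound} we have
\[
\overline{B_{U_r}}\subseteq X\setminus B_{E_{r,r'}}\subseteq B_{U_{r'}},
\]
so $B_{E_{r,r'}}$ is a basic open set sandwiched between $X\setminus B_{U_{r'}}$ and $X\setminus \overline{B_{U_r}}$. Hence
\[
\{f>q\}=\bigcup_{q<r<r'} B_{E_{r,r'}},
\]
taken over consecutive dyadic pairs $r<r'$ at each stage.

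To check this equality:
\begin{itemize}
\item For $\supseteq$: a point of $B_{E_{r,r'}}$ lies outside $\overline{B_{U_r}}$, hence outside every $B_{U_s}$ with $s\le r$. So $f\ge r>q$.
\item For $\subseteq$: if $f(x)>q$, pick dyadics $q<r<r'<f(x)$ that are consecutive at some fine enough stage. Then $x\notin B_{U_{r'}}$, so $x\in B_{E_{r,r'}}$.
\end{itemize}
This union is effectively open uniformly in $q$ and the initial witness, which completes the plan.
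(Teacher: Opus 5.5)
Your proposal is correct and follows essentially the same argument as the paper: dyadic interpolation via Lemma~\ref{lem:interp}, the definition $f(x)=\inf\{r: x\in B_{U_r}\}$, and the key identity $\{f>q\}=\bigcup_{q<r<r'}B_{E_{r,r'}}$, which expresses the superlevel sets through the witness sets rather than through the closures. The only difference is minor: you record witnesses only for consecutive dyadic pairs, whereas the paper asserts witnesses $E(q,q')$ for all $q<q'$; those come for free anyway, since a witness for $A_q\prec A_{q^+}$ also witnesses $A_q\prec A_{q'}$ whenever $B_{A_{q^+}}\subseteq B_{A_{q'}}$.
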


\begin{proof}
Set $A_0:=A$ and $A_1:=B$ and, using Lemma~\ref{lem:interp} repeatedly, compute formal opens $A_q$ for all dyadic rationals $q\in[0,1]$ with $A_q\prec A_{q'}$, and witnesses $E(q,q')$, whenever $q<q'$. Define
\[
f(x):=\inf\{q : x\in B_{A_q}\}.
\]
By Lemma~\ref{lem:prec-sound}, the $B_{A_q}$ increase with $q$, so $f$ is well defined, vanishes on $B_{A_0}=B_A$, and equals $1$ off $B_{A_1}=B_B$.

Since $\{f<q\}=\bigcup_{q'<q}B_{A_{q'}}$ and
\[
\{f>q\}=\bigcup_{q<q'<q''}B_{E(q',q'')} ,
\]
both are clearly effectively open. To see the latter equality, note that if $x\in B_{E(q',q'')}$, then $x\notin B_{A_{q'}}$ by $\mathrm{Ap}(A_{q'},E(q',q''))$, so $f(x)\ge q'>q$.  Conversely, if $f(x)>q$, choose $q<q'<q''<f(x)$, so $x\notin B_{A_{q''}}$ and hence $x\in B_{E(q',q'')}$ because $X=B_{A_{q''}}\cup B_{E(q',q'')}$.
\end{proof}

Let $(f_n)_{n\in\N}$ effectively enumerate the functions of Lemma~\ref{lem:urysohn} over the c.e.\ set of pairs $(A,B)$ with $A\prec B$, and set
\[
\rho(x,y):=\sum_n 2^{-n}\,|f_n(x)-f_n(y)| .
\]

\begin{proposition}\label{prop:metricfn}
$\rho$ is a computable metric on $X$.
\end{proposition}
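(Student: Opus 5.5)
We need to show that $\rho$ is a metric on $X$, and that $\rho(x_m,x_n)$ is computable for special points. Since the special points are only introduced later, using overtness, the computability part must really say that $\rho\colon X\times X\to\mathbb{R}$ is a computable function on the computable topological space $X\times X$. That is, for each rational $q$, the sets $\{\rho<q\}$ and $\{\rho>q\}$ are effectively open in $X\times X$, uniformly in $q$. Once special points are fixed as computable points, this gives computability of $\rho$ on them.

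\textbf{Metric axioms.} Each $f_n$ takes values in $[0,1]$, so the series defining $\rho$ converges. Symmetry and the triangle inequality hold termwise. The only real content is that $\rho(x,y)=0$ implies $x=y$.

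Suppose $x\ne y$. By computable Hausdorffness there is $(k,l)\in W$ with $x\in B_k$ and $y\in B_l$. Put $B:=\{j : (j,j,j)\ \text{is irrelevant}\}$; more concretely, take the formal open $\{m\}$ for some basic $B_m$ that contains $x$ but not $y$. Such an $m$ exists because $B_k$ contains $x$ and is disjoint from $B_l\ni y$, so we may take $m=k$.

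Lemma~\ref{lem:rich} then gives $i$ with $x\in B_i$ and $\{i\}\prec\{k\}$. The pair $(\{i\},\{k\})$ is enumerated as some index $n$. The function $f_n$ from Lemma~\ref{lem:urysohn} satisfies $f_n(x)=0$, since $x\in B_i$, and $f_n(y)=1$, since $y\notin B_k$. Hence $\rho(x,y)\ge 2^{-n}>0$.

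\textbf{Computability.} By Lemma~\ref{lem:urysohn}, each $f_n$ is a computable map $X\to[0,1]$, uniformly in $n$, because $\{f_n<q\}$ and $\{f_n>q\}$ are effectively open uniformly. It follows that $(x,y)\mapsto|f_n(x)-f_n(y)|$ is computable on $X\times X$, uniformly in $n$.

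For the upper cut we use
\[
\{\rho<q\}=\bigcup_N\Big\{(x,y): \sum_{n\le N}2^{-n}|f_n(x)-f_n(y)|<q-2^{-N+1}\Big\},
\]
which is a c.e. union of effectively open sets. The inclusion $\supseteq$ holds because the tail $\sum_{n>N}2^{-n}|f_n(x)-f_n(y)|$ is at most $2^{-N}$. For $\subseteq$, if $\rho(x,y)<q$ we can choose $N$ large enough that the gap $q-\rho(x,y)$ exceeds $2^{-N+1}$.

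For the lower cut,
\[
\{\rho>q\}=\bigcup_N\Big\{(x,y): \sum_{n\le N}2^{-n}|f_n(x)-f_n(y)|>q\Big\},
\]
and this is effectively open for the same reason.

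In both cases, openness of the finite partial sums is expressed by a rational search: the condition on a finite sum is witnessed by rational intervals for each $f_n(x)$ and $f_n(y)$. This is a routine manipulation of computable reals.

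\textbf{Main obstacle.} The delicate point is the enumeration of $(f_n)$. It runs over a c.e. set of pairs with attached witnesses, and Lemma~\ref{lem:interp} must supply the dyadic interpolants uniformly. So the proof should stress that each step is uniform in the witness. If the enumeration of $\prec$ repeats pairs with different witnesses, that is harmless: every resulting function still separates points as above, and the series still converges.
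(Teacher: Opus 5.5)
Your proof is correct and follows the paper's argument: points are separated by taking a Hausdorff witness $(k,l)\in W$, using Lemma~\ref{lem:rich} to get $\{i\}\prec\{k\}$, and observing that the corresponding Urysohn function is $0$ at $x$ and $1$ at $y$; computability comes from the effective convergence of the series, which you spell out in more detail than the paper does. The stray definition of $B$ before you settle on $m=k$ is meaningless as written and should simply be deleted.
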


\begin{proof}
Since the series converges effectively, $\rho$ is computable.  It is clearly a pseudometric.  To see that it separates points, suppose $x\ne y$ and take $(j,l)\in W$ with $x\in B_j$, $y\in B_l$; Lemma~\ref{lem:rich} applied to $x\in B_{\{j\}}$ gives $i$ with $x\in B_i$ and $\{i\}\prec\{j\}$, and the corresponding $f$ has $f(x)=0$, $f(y)=1$ since $y\notin B_j$. Hence $\rho(x,y)>0$.
\end{proof}

At this point we have produced the metric for our computable metric space presentation of $X$.  Producing the special points for our presentation requires a nested sequence of basic sets, and it requires their $\rho$-diameters to shrink. The following makes small basic sets recognisable, which is the one point where the explicit form of $\rho$ is used rather than just its computability.

\begin{lemma}\label{lem:small}
For each $n\in \N$ and each pair $a<b$ of rational numbers, let $W_{n,a,b}$ be a c.e.\ index set with $\bigcup_{i\in W_{n,a,b}}B_i=f_n^{-1}(a,b)$, closed under refinement in the sense that $i'\le^\ast i\in W_{n,a,b}$ implies $i'\in W_{n,a,b}$. Let $\mathrm{Small}(k)$ denote the set of $i\in \N$ such that, for each $n\leq k$, there are rational numbers $a_n<b_n$ such that $b_n-a_n<2^{-k}$ and $i\in W_{n,a_n,b_n}$.
Then:
\begin{itemize}
    \item $\mathrm{Small}(k)$ is c.e.\ and closed under refinement.
    \item If $i\in\mathrm{Small}(k)$, then $\diam_\rho(\overline{B_i})\le2^{-k+2}$.
    \item For every $x\in X$, there is $i\in\mathrm{Small}(k)$ with $x\in B_i$.
\end{itemize}
\end{lemma}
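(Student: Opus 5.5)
We check the three bullets of the statement in order. Throughout we use the existence and refinement-closure of the index sets $W_{n,a,b}$. Their existence comes from Lemma~\ref{lem:urysohn}: $f_n^{-1}(a,b)=\{f_n>a\}\cap\{f_n<b\}$ is effectively open uniformly in $n,a,b$. Given any c.e. name $V$ of it, we may replace $V$ by its downward $\le^\ast$-closure $\{i' : i'\le^\ast i \text{ for some } i\in V\}$. This set is still c.e., since $\le^\ast$ is c.e., and it still names the same open set, since $i'\le^\ast i$ implies $B_{i'}\subseteq B_i$.

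\textbf{First bullet.} $\mathrm{Small}(k)$ is c.e.: to enumerate it, we search, for each $n\le k$, over rational pairs $a_n<b_n$ with $b_n-a_n<2^{-k}$ and over enumeration stages of $W_{n,a_n,b_n}$. Closure under refinement is immediate. If $i'\le^\ast i\in\mathrm{Small}(k)$, then the same rationals $a_n,b_n$ work for $i'$, because each $W_{n,a_n,b_n}$ is refinement-closed.

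\textbf{Second bullet.} Let $i\in\mathrm{Small}(k)$ and take $x,y\in\overline{B_i}$. For $n\le k$ we have $B_i\subseteq f_n^{-1}(a_n,b_n)$. By continuity of $f_n$ (it is computable, hence continuous), $f_n(\overline{B_i})\subseteq[a_n,b_n]$, so $|f_n(x)-f_n(y)|\le 2^{-k}$. For $n>k$ we use the trivial bound $|f_n(x)-f_n(y)|\le 1$. Hence
\[
\rho(x,y)\le \sum_{n\le k}2^{-n}2^{-k}+\sum_{n>k}2^{-n}\le 2\cdot 2^{-k}+2^{-k}\le 2^{-k+2}.
\]
The exact normalization of indices (starting at $0$ or $1$) only affects the constant, and the stated bound absorbs it.

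\textbf{Third bullet.} Fix $x$ and $k$. For each $n\le k$, choose rationals $a_n<f_n(x)<b_n$ with $b_n-a_n<2^{-k}$. Then $x\in f_n^{-1}(a_n,b_n)$, so there is $j_n\in W_{n,a_n,b_n}$ with $x\in B_{j_n}$. Lemma~\ref{lem:refine} then gives $i$ with $x\in B_i$ and $i\le^\ast j_n$ for all $n\le k$. By refinement-closure, $i\in W_{n,a_n,b_n}$ for every $n\le k$, so $i\in\mathrm{Small}(k)$.

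The only real subtlety is this last step. Membership in $\mathrm{Small}(k)$ requires a single index $i$ lying in all $k+1$ sets $W_{n,a_n,b_n}$ simultaneously. That is exactly why the sets $W_{n,a,b}$ must be refinement-closed, and why we must use the $\le^\ast$-refinement from Lemma~\ref{lem:refine} rather than just the intersection of the corresponding opens.
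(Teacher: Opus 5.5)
Your proposal is correct and follows essentially the same argument as the paper. You build the refinement-closed $W_{n,a,b}$ by $\le^\ast$-closing a c.e.\ name, bound the diameter via $\overline{B_i}\subseteq f_n^{-1}[a_n,b_n]$ with the series for $\rho$ split at $k$, and get the third bullet from Lemma~\ref{lem:refine} together with refinement-closure, exactly as the paper does.
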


\begin{proof}
We first note that such $W_{n,a,b}$ exist: take any c.e.\ index set for the effectively open $f_n^{-1}(a,b)$ (available by Lemma~\ref{lem:urysohn}) and close it under $\le^\ast$, which changes neither the union nor c.e.-ness. Closure and c.e.-ness of $\mathrm{Small}(k)$ follow.

For the diameter bound, fix $i\in\mathrm{Small}(k)$. Since $f_n$ is continuous, we have $$\overline{B_i}\subseteq\overline{f_n^{-1}(a_n,b_n)}\subseteq f_n^{-1}[a_n,b_n],$$ so $f_n$ varies by at most $2^{-k}$ on $\overline{B_i}$ for each $n\le k$. Hence, for $x,y\in\overline{B_i}$, we have
\[
\rho(x,y)\le\ 2^{-k}\sum_{n\le k}2^{-n}+\sum_{n>k}2^{-n}\ \le\ 2^{-k+1}+2^{-k}\ \le\ 2^{-k+2}.
\]

For the last point, fix $x\in X$ and $k\in \N$. For each $n\le k$ continuity gives rationals $a_n<f_n(x)<b_n$ with $b_n-a_n<2^{-k}$, and then some $j_n\in W_{n,a_n,b_n}$ with $x\in B_{j_n}$. Lemma~\ref{lem:refine} produces $i$ with $x\in B_i$ and $i\le^\ast j_n$ for every $n\le k$; by closure under refinement, $i\in W_{n,a_n,b_n}$ for each $n\le k$, establishing that $i\in\mathrm{Small}(k)$.
\end{proof}

We are now ready to prove Theorem \ref{thm:metrize}:

\begin{proof}[Proof of Theorem \ref{thm:metrize}]
Let $\Omega=\{i\in\N:B_i\ne\emptyset\}$ be the c.e.\ set given by overtness and let $\rho$ be as in Proposition~\ref{prop:metricfn}.

Let $i_0\in\Omega$ be given. Build indices $i_1,i_2,\dots$ such that, for every $m\ge1$, we have
\[
\{i_m\}\prec\{i_{m-1}\},\qquad i_m\in\mathrm{Small}(m),\qquad i_m\in\Omega ,
\]
together with witnesses. At stage $m$, pick $x\in B_{i_{m-1}}$ (nonempty, as $i_{m-1}\in\Omega$). Lemma~\ref{lem:rich} gives $i'$ with $x\in B_{i'}$ and $\{i'\}\prec\{i_{m-1}\}$; Lemma~\ref{lem:small} gives $j\in\mathrm{Small}(m)$ with $x\in B_j$; Lemma~\ref{lem:refine} gives $i_m$ with $x\in B_{i_m}$, $i_m\le^\ast i'$ and $i_m\le^\ast j$. Then $i_m\in\mathrm{Small}(m)$ by closure under refinement, $\{i_m\}\prec\{i_{m-1}\}$ by Lemma~\ref{lem:prec-down}, and $i_m\in\Omega$ since $x\in B_{i_m}$, this last being confirmed by the overtness enumeration after finitely many steps. So all three conditions are simultaneously achievable, and the search for a triple satisfying them terminates.

By Lemma~\ref{lem:prec-sound}, $\overline{B_{i_m}}\subseteq B_{i_{m-1}}$, so the sets $\overline{B_{i_m}}$ are nested; each is nonempty since $i_m\in\Omega$; and by compactness of $X$ we have $\bigcap_m\overline{B_{i_m}}\ne\emptyset$. By Lemma~\ref{lem:small}, $\diam_\rho(\overline{B_{i_m}})\le2^{-m+2}$, whence the intersection is a singleton, which we denote by $x^\ast(i_0)$. Note $x^\ast(i_0)\in\overline{B_{i_1}}\subseteq B_{i_0}$.

For $n\le m$, membership $i_m\in\mathrm{Small}(m)$ supplies rationals with $b_n-a_n<2^{-m}$ and $\overline{B_{i_m}}\subseteq f_n^{-1}[a_n,b_n]$, so $f_n(x^\ast)\in[a_n,b_n]$. Thus $f_n(x^\ast)$ is computable uniformly in $n$ and in the construction, and therefore so is
\[
\rho\big(x^\ast(i_0),x^\ast(i_0')\big)=\sum_n2^{-n}\big|f_n(x^\ast(i_0))-f_n(x^\ast(i_0'))\big| ,
\]
as the series converges effectively.

Let $(\alpha_r)_{r\in \N}$ enumerate $x^\ast(i_0)$ as $i_0$ ranges over $\Omega$. Any nonempty open $U$ contains a basic $B_{i_0}$ which is nonempty, hence in $\Omega$, and $x^\ast(i_0)\in B_{i_0}\subseteq U$, proving that the sequence is dense

The family $\{B_i : i\in\mathrm{Small}(k)\cap\Omega\}$ is c.e. and covers $X$ by Lemma~\ref{lem:small}. Computable compactness yields a finite subcover indexed by $i^{(1)},\dots,i^{(r)}\in\mathrm{Small}(k)\cap\Omega$. Each $B_{i^{(t)}}$ has $\rho$-diameter at most $2^{-k+2}$ and contains the special point $x^\ast(i^{(t)})$, so the balls $B_\rho(x^\ast(i^{(t)}),2^{-k+2})$ cover $X$. As $k$ is arbitrary this establishes computable compactness of the the computable metric space $(X,\rho,(\alpha_r))$.

The identity $(X,\tau)\to(X,\rho)$ is computable (and thus continuous) by Proposition \ref{prop:metricfn}. Its inverse is computable (and thus continuous) by Corollary~\ref{cor:inverse}, $X$ being computably compact and $(X,\rho)$ computably Hausdorff.
\end{proof}

\end{document}